\documentclass[11pt]{article}
\usepackage{amssymb, amsmath, enumerate, multirow, adjustbox, hhline, amsthm, tikz, geometry, pdflscape, rotating, hyperref}
\newtheorem{propn}{Proposition}
\newtheorem{defn}{Definition}
\newtheorem{corol}{Corollary}
\newtheorem{lemma}{Lemma}
\newtheorem{theorem}{Theorem}
\title{An Analysis of the Diaconis-Holmes-Neal Markov Chain Sampler Under Generalized Unimodal Underlying Probabilities}
\author{Martin V. Hildebrand\thanks{Department of Mathematics and Statistics, University at Albany, SUNY, Albany, NY 12222 USA E-mail: {\tt mhildebrand@albany.edu}} \and Christopher J. Lange\thanks{Department of Mathematics and Statistics, University at Albany, SUNY, Albany, NY 12222 USA E-mail: {\tt clange@albany.edu}}}
\date{\today}
\begin{document}

    \maketitle

    \begin{abstract}
        \noindent Upon the introduction of the Metropolis algorithm \cite{Metropolis1953}, the question of how many steps in the Markov chain were needed to achieve convergence to stationarity became apparent.  The convergence was rather slow, i.e. for a process on $n$ states the number of steps needed to achieve convergence to stationarity was found to be on the order of $n^2$ if the underlying distribution is uniform. \cite{DHN2000}\\

        \noindent The obvious problem with Metropolis et. al \cite{Metropolis1953} is that the Markov chain is reversible. In other words, for any state $j$ we can move from $j$ to $j + 1$ and back to $j$ in two steps.  To correct for this, Diaconis, Holmes, and Neal \cite{DHN2000} improved Metropolis et. al \cite{Metropolis1953} by introducing a non-reversible Markov chain.  The Diaconis-Holmes-Neal sampler, as it is known, is a Markov chain on two copies of $n$ states, a $+1$ copy and a $-1$ copy.  \\

        \noindent Applications of the Diaconis-Holmes-Neal sampler include Markov chain sampling and situations in statistical physics, among others \cite{DHN2000}.  However, an answer to the question of how many steps are needed to achieve convergence to stationarity was required.  In \cite{Hildebrand2002}, Hildebrand showed that if the underlying probabilities are log-concave then the sampler achieves convergence to stationarity in at least a constant multiple of $n$ steps.  Nonetheless, the question of whether a similar convergence exists when the underlying probabilities are instead unimodal was posed in Hildebrand \cite{Hildebrand2002}. While Lange \cite{Lange2025a} answered the question in the three simplest cases - the simple case, the function of $n$ case, and the asymmetric function of $n$ case - and Lange \cite{Lange2025b} answered the question in the general symmetric unimodal case, the general unimodal case is left to this paper.
    \end{abstract}

    \section{Introduction}

    \noindent Metropolis et. al \cite{Metropolis1953} introduced a reversible Markov chain on $n$ states with transition probabilities from state to state equal to 50\%.  Given underlying probabilities $\pi(x)$, for $1 \leq x \leq n$, the algorithm works as follows:
    \begin{enumerate}
	    \item Suppose the Markov chain is at state $j$.  With probability equal to 50\%, let $k = j + 1$.  Otherwise, let $k = j - 1$.
	    \item For $1 \leq k \leq n$ and with probability $p$, where $p$ is defined by
        \begin{equation}
            p = \min\left\{1, \frac{\pi(k)}{\pi(j)}\right\},
        \end{equation}
        \noindent the Markov chain advances to state $k$. Otherwise, the algorithm stays at state $j$.
    \end{enumerate}
    \noindent A diagram of the Metropolis algorithm is Figure~\ref{metropolisdiagram}.
    \begin{figure}[ht!]
	   \centering
	   \begin{tikzpicture}[scale = 0.8]
		  \foreach \x in {1,5,12,16}{
			 \draw[ultra thick] (\x, 0) circle [radius = 1]; 
		  }
		  \node(dots) at (8.5,0) {\text{\textbf{\dots}}}; 
		
		  \draw[<->, blue, ultra thick] (2.2,0) -- (3.8,0);
		  \draw[<->, purple, ultra thick] (6.2,0) -- (8,0);
		  \draw[<->, violet, ultra thick] (8.9,0) -- (10.8,0);
		  \draw[<->, magenta, ultra thick] (13.2,0) -- (14.8,0);
		
		  \node(left) at (-0.4,0.7){};
		  \node(right) at (17.4,-0.7){};
		  \draw[<-, red, ultra thick] (left)arc(45:315:1);
		  \draw[<-, green, ultra thick] (right)arc(45:315:-1);
	   \end{tikzpicture}
	   \caption{A diagram of the Metropolis algorithm.}
       \label{metropolisdiagram}
    \end{figure}
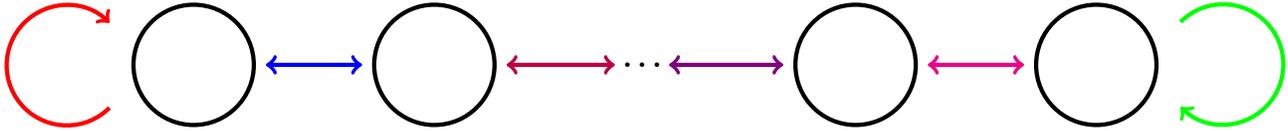
    \noindent Diaconis, Holmes, and Neal \cite{DHN2000} used the Central Limit Theorem to prove the following about the number of steps needed for the algorithm to converge to stationarity. 
    \begin{theorem}
	   For $\pi(j) = n^{-1}$ with $1 \leq j \leq n$, the number of steps needed for the Metropolis algorithm to converge to stationarity is on the order of $n^2$. \label{theorem1}
    \end{theorem}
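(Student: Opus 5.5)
We read the statement in total variation: there are constants $c_1, c_2 > 0$ such that after $c_2 n^2$ steps the distribution is within a small $\epsilon$ of uniform from every starting state, while after $c_1 n^2$ steps it is still at distance bounded away from $0$ from some starting state. With $\pi(j)=n^{-1}$ the acceptance ratio $\min\{1,\pi(k)/\pi(j)\}$ is always $1$. The chain is therefore exactly simple symmetric random walk on $\{1,\dots,n\}$: from each state it moves to $j\pm1$ with probability $1/2$, and a proposed move to $0$ or $n+1$ becomes a holding step. We prove the two bounds separately, comparing the chain with unrestricted simple random walk $S_t$ on $\mathbb{Z}$ and using the Central Limit Theorem, in the spirit of \cite{DHN2000}.

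\emph{Lower bound.} Start the chain at $X_0=1$. Up to reflection at the boundaries, $X_t$ is the walk $S_t$ started at $1$. Reflection only shortens distances, so $|X_t-1|\le |S_t-1|$ pathwise. By the CLT (or simply Chebyshev, since $\mathrm{Var}(S_t)=t$), for $t=c_1n^2$ we get
\[
P\left(|S_t-1|> n/2\right)\le \frac{4c_1n^2}{n^2}=4c_1 .
\]
Take $A=\{1,\dots,\lfloor n/2\rfloor\}$. Then $P(X_t\in A)\ge 1-4c_1$, while the uniform measure gives $A$ mass at most $1/2$. The total variation distance is thus at least $1/2-4c_1$, which is bounded away from $0$ once $c_1<1/16$. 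This is the routine half.

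\emph{Upper bound.} We use a coupling. Run two copies, $X_t$ from an arbitrary state and $Y_t$ from stationarity (uniform, since the transition matrix is symmetric and hence doubly stochastic). Couple them to move independently until they meet, and identically afterwards. To avoid parity issues, either work with the lazy version, whose mixing time is comparable, or let only one of the two chains move at each step, chosen by a fair coin; in either case the difference $X_t-Y_t$ performs a symmetric walk that can reach $0$. The coupling inequality gives $\|P^t(x,\cdot)-\pi\|_{TV}\le P(T>t)$, where $T$ is the meeting time. The main obstacle is bounding $T$ uniformly in the starting states, because of the boundary effects.

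My first attempt would be to bound $T$ by the exit/hitting time of a one-dimensional walk on an interval of length $n$. For example, run a single coordinate until it hits $1$ or $n$, and note that the ordering $X_t\le Y_t$ is preserved. Alternatively, use the "reflection" coupling $Y_t=n+1-X_t$ mirrored, so that the chains meet when $X_t$ reaches the middle. Either way we need $P(T>t)$ for a walk started within distance $n$ of its target. Optional stopping applied to $S_t^2-t$ gives $E[T]\le Cn^2$, and then Markov's inequality gives $P(T>c_2n^2)\le C/c_2<\epsilon$. Since this bound holds uniformly over starting pairs, we can iterate over blocks of length $c_2n^2$ to get an exponential tail if needed. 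Combining the two bounds shows that the mixing time is of order $n^2$. As a cross-check, one could instead diagonalize the chain: its eigenvalues are $\cos(\pi k/n)$, the spectral gap is $\asymp n^{-2}$, and this confirms the $n^2$ order directly.
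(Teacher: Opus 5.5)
The paper does not prove this statement; it only cites \cite{DHN2000} (a CLT argument), so your proposal has to stand on its own. Your lower bound is essentially right. Your upper bound has a genuine gap: it is proved for the wrong chain. Both options you offer are couplings of the \emph{lazy} chain $\tfrac12(I+P)$. In the ``one chain moves at a time'' coupling, each marginal stays put with probability $1/2$ at every step, so it is not a copy of the Metropolis chain. You then assert that the lazy chain's mixing time ``is comparable''. That is exactly the direction that fails in general: if $P$ is periodic, the lazy chain mixes and $P$ never does. Here it is the whole difficulty, because the uniform-$\pi$ Metropolis chain is a nearest-neighbour walk that is periodic except for rejections at $1$ and $n$. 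Its smallest eigenvalue is $\cos(\pi(n-1)/n)=-\cos(\pi/n)$, which is within $O(n^{-2})$ of $-1$. Your spectral cross-check has the same defect. The spectral gap alone gives only $O(n^2\log n)$, and for the non-lazy chain the eigenvalues near $-1$ must also be controlled.

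The fix keeps your hitting-time idea but changes the coupling: let both copies use the \emph{same} proposal $\pm1$ at every step. If $X_0\le Y_0$, the order is preserved and $Y_t-X_t$ never increases. It drops by one exactly when one copy's proposal is rejected at a wall, and the copies have coalesced once the upper one reaches $1$. Unfolding the walk across $n+\tfrac12$ turns this hitting time into the exit time of simple random walk from $\{2,\dots,2n-1\}$. Hence $E_x[\tau_1]=(x-1)(2n-x)\le n^2$, and $P(T>c_2n^2)\le 1/c_2$ uniformly in the starting pair, with no parity issue at all. Alternatively, sum the full expansion $\sum_{k=1}^{n-1}\lambda_k^{2t}\phi_k(x)^2$ with $\phi_k(x)=\sqrt2\cos(\pi k(x-\tfrac12)/n)$, using $|\lambda_{n-k}|=\lambda_k$.

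A smaller point concerns the lower bound. The claim $|X_t-1|\le|S_t-1|$ is false pathwise under the same-increment coupling: proposals $-1,+1$ from state $1$ give $X_2=2$ but $S_2=1$. Use instead $E[(X_{t+1}-1)^2\mid X_t]\le (X_t-1)^2+1$, which gives $E[(X_t-1)^2]\le t$ and the same Chebyshev estimate.
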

    \noindent This theorem presents a question to be answered. What is meant by ``convergence to stationarity"? First, a definition.
    \begin{defn}
        Suppose $P(s)$ and $Q(s)$ are probabilities on a finite space $S$. The \underline{variation} \underline{distance} between $P$ and $Q$ is given by
        \begin{equation}
            ||P - Q|| = \frac{1}{2}\sum_{s \in S}\Biggl[\Bigl|P(s) - Q(s)\Bigr|\Biggr].
        \end{equation} \label{variationdistance}
    \end{defn}
    \noindent With this, we say that convergence to stationarity in any number of steps occurs when the variation distance between the probability of the algorithm advancing from any state $y$ in the specified number of steps and the stationary distribution of the algorithm is less than $\delta$ for some constant $\delta > 0$. From this theorem, we see a problem with the Metropolis algorithm when $n$ is large.\\

    \noindent To improve the convergence, Diaconis, Holmes, and Neal \cite{DHN2000} devised their algorithm. It utilizes two copies of each of the $n$ states, a $+1$ copy and a $-1$ copy, and allows for the Markov chain to change from the $+1$ copy to the $-1$ copy and vice versa.  The algorithm works as follows:
    \begin{enumerate}
        \item Let $\epsilon = \pm 1$, $1 \leq j \leq n$, and $0 < \theta < 1$. Suppose the Markov chain is at state $(\epsilon, j)$ and suppose the underlying probabilities are given by $\pi(j)$ with $\pi(0) = \pi(n + 1) = 0$.
        \item The Markov chain will advance to $(-\epsilon, j + \epsilon)$ with probability $\alpha$, where
        \begin{equation}
            \alpha = \min\left\{1, \frac{\pi(j + \epsilon)}{\pi(j)}\right\}
        \end{equation}
        \noindent and stay put otherwise.
        \item With probability $1 - \theta$, the Markov chain will change the sign of the first coordinate and stay put otherwise.
    \end{enumerate}
    \noindent \textbf{Note.}  One step of the Markov chain, as defined by Diaconis, Holmes, and Neal \cite{DHN2000}, is completed after Steps 2 and 3 are performed. Also, in this paper, $\theta = n^{-1}$.\\

    \noindent The Diaconis-Holmes-Neal sampler, as the algorithm is more commonly known, has four possible moves.  They are:
    \begin{enumerate}
	   \item \textbf{The shift.}  When a shift occurs, the Diaconis-Holmes-Neal sampler moves from $(+1, j)$ to $(+1, j + 1)$ or from $(-1, j)$ to $(-1, j - 1)$ with probability equal to
	   \begin{align}
		  p_s &= (1 - \theta)\min\left\{1, \frac{\pi(j + 1)}{\pi(j)}\right\}\\
		  &= \min\left\{1 - \theta, \frac{(1 - \theta)\pi(j + 1)}{\pi(j)}\right\}
	   \end{align}
	   \noindent if $\epsilon = +1$ and
	   \begin{align}
		  p_s &= (1 - \theta)\min\left\{1, \frac{\pi(j - 1)}{\pi(j)}\right\}\\
		  &= \min\left\{1 - \theta, \frac{(1 - \theta)\pi(j - 1)}{\pi(j)}\right\}
	   \end{align}
	   \noindent if $\epsilon = -1$.
	   \item \textbf{The jump.}  When a jump occurs, the Diaconis-Holmes-Neal sampler moves from $(+1, j)$ to $(-1, j)$ or vice versa with probability equal to
	   \begin{align}
		  p_j &= (1 - \theta)\left(1 - \min\left\{1, \frac{\pi(j + 1)}{\pi(j)}\right\}\right)\\
		  &= (1 - \theta)\max\left\{0, 1 - \frac{\pi(j + 1)}{\pi(j)}\right\}\\
		  &= (1 - \theta)\max\left\{0, \frac{\pi(j) - \pi(j + 1)}{\pi(j)}\right\}\\
		  &= \max\left\{0, \frac{(1 - \theta)(\pi(j) - \pi(j + 1))}{\pi(j)}\right\}
	   \end{align}
	   \noindent if $\epsilon = +1$ and
	   \begin{align}
		  p_j &= (1 - \theta)\left(1 - \min\left\{1, \frac{\pi(j - 1)}{\pi(j)}\right\}\right)\\
		  &= (1 - \theta)\max\left\{0, 1 - \frac{\pi(j - 1)}{\pi(j)}\right\}\\
		  &= (1 - \theta)\max\left\{0, \frac{\pi(j) - \pi(j - 1)}{\pi(j)}\right\}\\
		  &= \max\left\{0, \frac{(1 - \theta)(\pi(j) - \pi(j - 1))}{\pi(j)}\right\}
	   \end{align}
	   \noindent if $\epsilon = -1$.
	   \item \textbf{The flip.}  When a flip occurs, the Diaconis-Holmes-Neal sampler moves from $(+1, j)$ to $(-1, j + 1)$ or from $(-1, j)$ to $(+1, j - 1)$ with probability equal to
	   \begin{align}
		  p_f &= \theta\min\left\{1, \frac{\pi(j + 1)}{\pi(j)}\right\}\\
		  &= \min\left\{\theta, \frac{\theta\pi(j + 1)}{\pi(j)}\right\}
	   \end{align}
	   \noindent when $\epsilon = +1$ and
	   \begin{align}
		  p_f &= \theta\min\left\{1, \frac{\pi(j - 1)}{\pi(j)}\right\}\\
		  &= \min\left\{\theta, \frac{\theta\pi(j - 1)}{\pi(j)}\right\}
	   \end{align}
	   \noindent when $\epsilon = -1$.
	   \item \textbf{The stationary move.}  When a stationary move occurs, the Diaconis-Holmes-Neal sampler does not change states.  This occurs with probability equal to
	   \begin{align}
		  p_{f'} &= \theta\left(1 - \min\left\{1, \frac{\pi(j + 1)}{\pi(j)}\right\}\right)\\
		  &= \theta\max\left\{0, 1 - \frac{\pi(j + 1)}{\pi(j)}\right\}\\
		  &= \theta\max\left\{0, \frac{\pi(j) - \pi(j + 1)}{\pi(j)}\right\}\\
		  &= \max\left\{0, \frac{\theta(\pi(j) - \pi(j + 1))}{\pi(j)}\right\}
	   \end{align}
	   \noindent if $\epsilon = +1$ and
	   \begin{align}
		  p_{f'} &= \theta\left(1 - \min\left\{1, \frac{\pi(j - 1)}{\pi(j)}\right\}\right)\\
		  &= \theta\max\left\{0, 1 - \frac{\pi(j - 1)}{\pi(j)}\right\}\\
		  &= \theta\max\left\{0, \frac{\pi(j) - \pi(j - 1)}{\pi(j)}\right\}\\
		  &= \max\left\{0, \frac{\theta(\pi(j) - \pi(j - 1))}{\pi(j)}\right\}
	   \end{align}
	   \noindent if $\epsilon = -1$.
    \end{enumerate}
    \noindent \textbf{Note.}  
    \begin{enumerate}
        \item Diaconis, Holmes, and Neal \cite{DHN2000}, Hildebrand \cite{Hildebrand2002}, and Hildebrand \cite{Hildebrand2004} do not distinguish between flips and stationary moves.  Furthermore, they only distinguish between any move with probability on the order of $\theta$ and any move with probability on the order of $1 - \theta$.  The distinction among the four possible moves is made here for clarity.
        \item Hildebrand \cite{Hildebrand2004} gives a diagram of the possible moves listed above. 
        \item Hildebrand \cite{Hildebrand2002} introduces the concept of a basepoint - a state $\tilde{j}$ such that $\pi(\tilde{j}) \geq \pi(j)$ for every state $j$.  This is not discussed in Diaconis, Holmes, and Neal. \cite{DHN2000}
    \end{enumerate}
    \noindent A diagram of the Diaconis-Holmes-Neal sampler is shown in Figure~\ref{dhnsampler}.
    \begin{figure}[ht!]
	   \centering
	   \begin{tikzpicture}[scale = 0.6]
		  \foreach \x in {1,4,10,13,16,22,25}{
		  \draw[magenta, ultra thick] (\x, 7) circle [radius = 1];
		  \draw[magenta, ultra thick] (\x, 3) circle [radius = 1];
		  }
		  \node(topdots1) at (7, 7){\text{\textbf{\dots}}};
		  \node(bottomdots1) at (7, 3){\text{\textbf{\dots}}};
		  \node(topdots2) at (19, 7){\text{\textbf{\dots}}};
		  \node(bottomdots2) at (19, 3){\text{\textbf{\dots}}};
		  \foreach \x in {2,5,8,11,14,17,20,23}{
		  \draw[->, green, ultra thick](\x, 7) -- (\x + 1, 7);
		  }
		  \foreach \x in {3,6,9,12,15,18,21,24}{
		  \draw[->, green, ultra thick](\x, 3) -- (\x - 1, 3);
		  }
		  \foreach \x in {1,4,7,10,13,16,19,22,25}{
		  \draw[<->, blue, ultra thick](\x, 4) -- (\x, 6);
		  }
		  \foreach \x in {1,4,7,10,13,16,19,22}{
		  \draw[<->, red, ultra thick](\x + .7071068, 6.2928932) -- (\x + 2.2928932, 3.7071068);
		  }
		  \node(top1) at (1.5, 8.5){};
		  \node(bottom1) at (0.5, 1.5){};
		  \node(top2) at (4.5, 8.5){};
		  \node(bottom2) at (3.5, 1.5){};
		  \node(top3) at (7.5, 8.5){};
		  \node(bottom3) at (6.5, 1.5){};
		  \node(top4) at (10.5, 8.5){};
		  \node(bottom4) at (9.5, 1.5){};
		  \node(top5) at (13.5, 8.5){};
		  \node(bottom5) at (12.5, 1.5){};
		  \node(top6) at (16.5, 8.5){};
		  \node(bottom6) at (15.5, 1.5){};
		  \node(top7) at (19.5, 8.5){};
		  \node(bottom7) at (18.5, 1.5){};
		  \node(top8) at (22.5, 8.5){};
		  \node(bottom8) at (21.5, 1.5){};
		  \node(top9) at (25.5, 8.5){};
		  \node(bottom9) at (24.5, 1.5){};
		  \draw [<-, black, ultra thick] (top1)arc(-60:240:1);
		  \draw [<-, black, ultra thick] (bottom1)arc(-60:240:-1);
		  \draw [<-, black, ultra thick] (top2)arc(-60:240:1);
		  \draw [<-, black, ultra thick] (bottom2)arc(-60:240:-1);
		  \draw [<-, black, ultra thick] (top3)arc(-60:240:1);
		  \draw [<-, black, ultra thick] (bottom3)arc(-60:240:-1);
		  \draw [<-, black, ultra thick] (top4)arc(-60:240:1);
		  \draw [<-, black, ultra thick] (bottom4)arc(-60:240:-1);
		  \draw [<-, black, ultra thick] (top5)arc(-60:240:1);
		  \draw [<-, black, ultra thick] (bottom5)arc(-60:240:-1);
		  \draw [<-, black, ultra thick] (top6)arc(-60:240:1);
		  \draw [<-, black, ultra thick] (bottom6)arc(-60:240:-1);
		  \draw [<-, black, ultra thick] (top7)arc(-60:240:1);
		  \draw [<-, black, ultra thick] (bottom7)arc(-60:240:-1);
		  \draw [<-, black, ultra thick] (top8)arc(-60:240:1);
		  \draw [<-, black, ultra thick] (bottom8)arc(-60:240:-1);
		  \draw [<-, black, ultra thick] (top9)arc(-60:240:1);
		  \draw [<-, black, ultra thick] (bottom9)arc(-60:240:-1);
	   \end{tikzpicture}
	   \caption{A diagram of the Diaconis-Holmes-Neal sampler.}
       \label{dhnsampler}
    \end{figure}
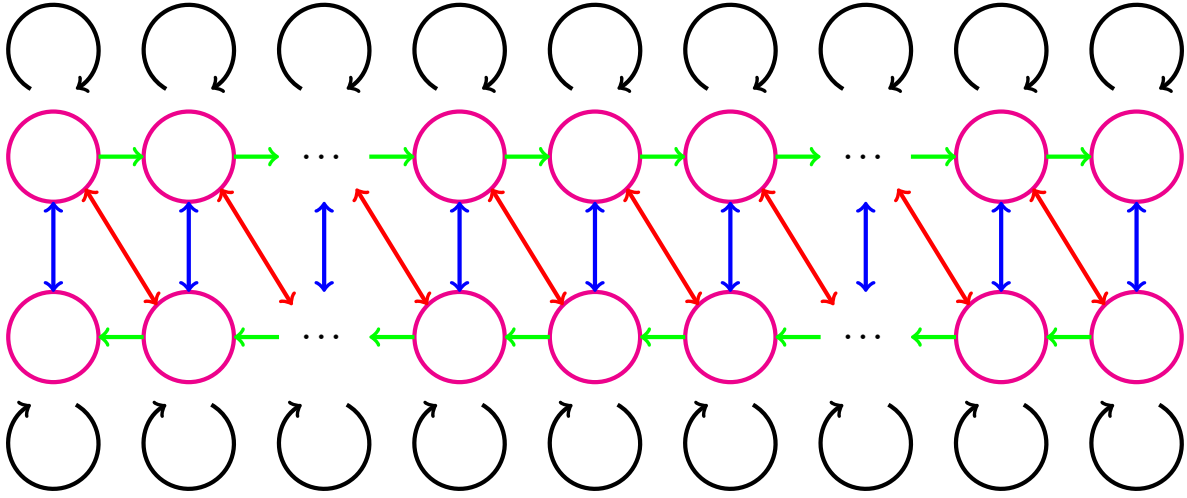
    \noindent From here, the question becomes how quickly the Diaconis-Holmes-Neal sampler converges to stationarity. Hildebrand \cite{Hildebrand2002} proved that $z$ is some constant $c$ multiple of $n$ steps if the underlying probabilities are log-concave, i.e. $(\pi(j))^2\ge \pi(j-1)\pi(j+1)$ for $j=2, 3,..., n-1$. (We assume $\pi(j)>0$ for $j=1, 2,..., n$.) The main theorem from this work is stated below.
    \begin{theorem}
	   Suppose 
	   \begin{enumerate}[(a)]
		  \item $\theta = n^{-1}$,
		  \item $\tilde{X}$ is the Diaconis-Holmes-Neal sampler with log-concave non-zero probabilities $\pi$,
		  \item $\tilde{\pi}$ is the stationary probability of the Diaconis-Holmes-Neal sampler (i.e. $\tilde{\pi}((\epsilon,j))=(1/2)\pi(j)$),
		  \item $P_z$ is the probability of $\tilde{X}$ moving from state $y$ to state $j'$ in exactly $z$ steps, and
		  \item $\delta > 0$.
	   \end{enumerate}
	   \noindent Then there exists a constant $c > 0$ such that
	   \begin{enumerate} [(1)]
		  \item $c$ does not depend on $\pi$, and
		  \item $||P_z - \tilde{\pi}|| < \delta$ for $z \geq cn$.
	   \end{enumerate} \label{Hildebrand2002Thm}
    \end{theorem}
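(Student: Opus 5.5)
The natural route is a coupling argument built around the basepoint $\tilde{j}$ introduced in \cite{Hildebrand2002}. By the coupling inequality, it suffices to run two copies of the sampler, one started at an arbitrary state $y$ and one started from $\tilde{\pi}$, and show that with probability at least $1-\delta$ they have coalesced by time $cn$, with $c$ independent of $\pi$. Since $\pi$ is log-concave and positive, it is unimodal. Its ratios $r_+(j)=\pi(j+1)/\pi(j)$ are nonincreasing in $j$, and $r_-(j)=\pi(j-1)/\pi(j)$ are nondecreasing. So on the $+1$ copy, shifts are certain (up to the factor $1-\theta$) below the mode and become progressively less likely above it, and symmetrically on the $-1$ copy. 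A walker on the $+1$ copy therefore travels deterministically toward the mode until it jumps. The jump is a geometric-type event governed by the decreasing ratios.

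First, I would show that from any starting state the chain visits the basepoint $(\cdot,\tilde{j})$ within $c_1 n$ steps with probability at least $1-\delta/4$. Flips and stationary moves have total probability $\theta=n^{-1}$ per step, so in $c_1 n$ steps only $O(1)$ of them occur with high probability. Between such moves the trajectory is a deterministic-direction sweep, interrupted only by jumps, which reverse direction in place. Below the mode on the copy pointing toward the mode, no jumps can occur, so the walker moves at unit speed. Moving away from the mode, log-concavity makes the walker's excursion past the mode behave like a walk stopped at a point where the cumulative product of ratios decays. The walker then jumps back and sweeps through $\tilde{j}$. Each sweep takes at most $n$ steps, which gives the bound.

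Second, I would couple the two copies so that they agree once both are at the basepoint region with the same sign. Alternatively, and more in the spirit of \cite{Hildebrand2002}, I would couple them so that the difference in their positions is controlled. Use identical uniforms for the shift/jump decision and for the $\theta$-event. Log-concavity gives monotonicity: if two walkers are on the same copy with $j\le j'$, the one behind has the larger acceptance ratio, so the ordering is preserved and the gap can only shrink. Then I would use the randomness of the flips: each of the $\Theta(1)$ expected flips in a window of length $n$ independently reshuffles phase. That gives a constant probability per window of length $O(n)$ that the two walkers land at the same state. Iterating over $k$ windows yields failure probability $(1-p)^k<\delta/2$. This fixes $c=k c_1$, with $p$ depending only on the structure, not on $\pi$.

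I expect the main obstacle to be the uniformity in $\pi$ of that per-window meeting probability $p$. The worry is a sharply peaked $\pi$, where jumps occur near the mode with probability close to $1$ and the walkers oscillate in a tiny region. In that case, however, both walkers are trapped near $\tilde{j}$, and the stationary mass is concentrated there anyway. So I would split into cases: the mass near the mode is large, or the ratios near the mode are close to $1$. In the first case I would compare directly with $\tilde{\pi}$ on a small set. In the second I would use the deterministic sweeping argument. The log-concavity inequality $\pi(j)^2\ge\pi(j-1)\pi(j+1)$ is exactly what makes this dichotomy clean, since the tail masses decay geometrically once the ratio drops below $1-\eta$.
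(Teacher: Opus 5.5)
There is a genuine gap in the coalescence step, and that step carries the whole content of the theorem.

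\textbf{The proposed coupling cannot coalesce.} With identical uniforms for the $\theta$-event, your two walkers never meet from half of the starting configurations. Set $\phi(\epsilon,j)=j+\mathbf{1}[\epsilon=-1] \pmod 2$. A shift or a jump (the moves made exactly when the sign-change coin of probability $1-\theta$ fires) changes $\phi$ by one. A flip or a stationary move changes it by $0$ or $2$. So under a shared $\theta$-coin, $\phi(X_t)-\phi(Y_t)$ is invariant. Since $(+1,j)$ and $(-1,j)$ have opposite parity and equal $\tilde{\pi}$-mass, a $\tilde{\pi}$-started copy has the wrong parity with probability exactly $1/2$ at every time. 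The coupling inequality therefore gives nothing below $1/2$. Your appeal to flips that ``independently reshuffle phase'' contradicts the shared coin, and if you decouple it, nothing in the proposal bounds the meeting probability.

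\textbf{The monotonicity claim is false for this lifted chain,} because the two copies move in opposite directions. Put both walkers on the $+1$ copy above the mode at $j<j'$, with a shared uniform $U$ satisfying $\min\{1,r_+(j')\}<U\le\min\{1,r_+(j)\}$. The leader jumps to $(-1,j')$ while the trailer shifts to $(+1,j+1)$, and the two then pass each other on different copies. Suppose the trailer later jumps from $(+1,j+s)$ while the leader is still above the mode. They then sit on the $-1$ copy at distance $|2s-(j'-j)|$. This exceeds $j'-j$ for large $s$, and their order is reversed when $2s<j'-j$.

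\textbf{Uniformity in $\pi$ is not established.} Item (1) is left to a heuristic dichotomy that does not cover intermediate regimes. Even for sharply peaked $\pi$, a $\theta$-move is needed to balance parity, so ``compare with $\tilde{\pi}$ on a small set'' is not immediate.

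\textbf{The missing idea is the Doeblin minorization} on which the route the paper attributes to Hildebrand rests. You need to show $P^{c_0 n}(y,(\epsilon,j))\ge\beta\,\tilde{\pi}((\epsilon,j))$ for all $y$, $\epsilon$, $j$, with $\beta$ depending only on $c_0$. This is done with explicit paths through the basepoint containing a single flip or stationary move:
\begin{enumerate}
\item Reaching the basepoint within $2n$ steps has probability at least $(1-1/n)^{2n}$.
\item A first passage from the basepoint to $(\epsilon,j)$ within $2n$ steps has probability at least of order $\pi(j)/\pi(j^*)$, the telescoping product of acceptance ratios.
\item One $\theta$-move, placed at one of order $n$ admissible times (total probability of order one), fixes parity and exact timing. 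Being at the basepoint at the prescribed time then has probability of order $\pi(j^*)$, the reciprocal of the mean return time $2/\pi(j^*)$.
\end{enumerate}
The product is of order $\pi(j)$, so the minorizing measure has total mass $\beta$ independent of $\pi$. The Meyn--Tweedie bound then gives $\|P_z-\tilde{\pi}\|\le(1-\beta)^{\lfloor z/(c_0n)\rfloor}<\delta$ once $z\ge kc_0n$, with $k$ depending only on $\beta$ and $\delta$. In coupling language, this minorization is what makes a splitting coupling succeed with probability $\beta$ per block of $c_0 n$ steps; the monotone coupling is neither needed nor available.
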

    \noindent One critical result used to prove the theorem above is Theorem 16.2.4 Meyn and Tweedie \cite{MeynTweedie2009} and is stated as the following theorem.
    \begin{theorem}
	   If a chain $\Phi$ satisfies
	   \begin{equation}
		  P^m(x, A) \geq \nu_m(A)
	   \end{equation}
	   \noindent for all $x \in X$ and $A \in {\cal B}(X)$, then
	   \begin{equation}
		  ||P^n(x, \centerdot) - \pi|| \leq \rho^{\lfloor n/m\rfloor} \label{1.1}
	   \end{equation}
	   \noindent where $\rho = 1 - \nu_m(X)$. \label{MeynTweedie2009Thm}
    \end{theorem}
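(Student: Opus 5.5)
The plan is to prove the statement through a contraction property of $P^m$ in variation distance (a Doeblin argument), and then iterate it. Throughout, $\pi$ denotes the stationary distribution, so $\pi P^k = \pi$ for every $k$, and $||\cdot||$ is the variation distance of Definition~\ref{variationdistance}. Recall that this distance is at most $1$ for probability measures and equals $\sup_A |P(A)-Q(A)|$.

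\textbf{Step 1: split the $m$-step kernel.} Put $\rho = 1-\nu_m(X)$. Taking $A = X$ in the hypothesis gives $\nu_m(X)\le 1$, so $0\le \rho\le 1$.
\begin{enumerate}
\item If $\rho = 0$, then $\nu_m$ is a probability measure with $P^m(x,\cdot)\ge \nu_m$. Since both sides have total mass $1$, this forces $P^m(x,\cdot)=\nu_m$ for every $x$.
\item If $0<\rho\le 1$, define $R(x,A) = \rho^{-1}\bigl(P^m(x,A)-\nu_m(A)\bigr)$. By hypothesis $R(x,\cdot)\ge 0$, and $R(x,X) = \rho^{-1}(1-\nu_m(X)) = 1$. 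So $R$ is a Markov kernel and $P^m(x,\cdot) = \nu_m + \rho R(x,\cdot)$.
\end{enumerate}
In both cases, for any probability measure $\mu$ we get $\mu P^m = \nu_m + \rho\,\mu R$, with the $R$-term absent when $\rho=0$.

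\textbf{Step 2: contraction.} Let $\mu,\lambda$ be probability measures. By Step 1,
\begin{equation*}
\mu P^m - \lambda P^m = \rho\,(\mu R - \lambda R),
\end{equation*}
because the common $\nu_m$ cancels. Write $a = ||\mu-\lambda||$. If $a>0$, the Jordan decomposition gives $\mu - \lambda = a(\mu' - \lambda')$, where $\mu' = (\mu-\lambda)^+/a$ and $\lambda' = (\mu-\lambda)^-/a$ are probability measures. Then
\begin{equation*}
||\mu P^m - \lambda P^m|| = a\rho\,||\mu' R - \lambda' R|| \le a\rho,
\end{equation*}
since $\mu'R$ and $\lambda'R$ are probability measures. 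Hence
\begin{equation*}
||\mu P^m-\lambda P^m||\le \rho||\mu-\lambda||.
\end{equation*}
The same decomposition applied to $P$ itself, with no $\nu$-term and factor $1$, shows that one-step moves never increase the distance: $||\mu P - \lambda P||\le ||\mu-\lambda||$.

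\textbf{Step 3: iterate.} Write $n = km + r$ with $k=\lfloor n/m\rfloor$ and $0\le r<m$. Take $\mu = \delta_x P^r$ and $\lambda = \pi = \pi P^r$. Applying Step 2 $k$ times gives
\begin{equation*}
||P^n(x,\cdot)-\pi|| \le \rho^{k}\,||P^r(x,\cdot)-\pi|| \le \rho^{\lfloor n/m\rfloor},
\end{equation*}
using that the variation distance between two probability measures is at most $1$. This is exactly (\ref{1.1}).

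The step needing the most care is Step 2: the Jordan decomposition must be used correctly, so that the normalized positive and negative parts really are probability measures, and the degenerate cases $\rho=0$ and $\mu=\lambda$ must be handled. A coupling argument is an equivalent alternative. Run two copies of the chain in blocks of $m$ steps; with probability $1-\rho$ both copies draw from $\nu_m/\nu_m(X)$ and merge, and otherwise they move independently via $R$. The coupling inequality then gives the same bound.
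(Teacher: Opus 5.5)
Your proof is correct. The splitting $P^m(x,\cdot)=\nu_m+\rho R(x,\cdot)$ is sound, as is the cancellation of $\nu_m$ between two probability measures. The Jordan-decomposition step giving $\|\mu P^m-\lambda P^m\|\le\rho\|\mu-\lambda\|$ is right, and so is the iteration starting from $\delta_xP^r$ and $\pi=\pi P^r$, degenerate cases included.

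The paper, however, gives no proof of this statement. It imports it as Theorem 16.2.4 of Meyn and Tweedie and only remarks that the factor $2$ there disappears under its half-sum definition of variation distance. Your route therefore replaces a citation with a self-contained Doeblin contraction argument.

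What your argument buys is transparency about that constant. The prefactor in front of $\rho^{\lfloor n/m\rfloor}$ is just a bound on the initial distance $\|P^r(x,\cdot)-\pi\|$. That bound is $1$ for the half-sum distance (equivalently $\sup_A|P(A)-Q(A)|$) and $2$ for Meyn and Tweedie's norm $\sup_{|f|\le 1}|\mu(f)|$, so your proof actually justifies the paper's Note.

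You do assume something the quoted statement leaves implicit: that a stationary probability $\pi$ exists. In the general theory this existence is a consequence of the minorization, not a hypothesis. In the paper's application this costs nothing, since the chain is finite and $\tilde{\pi}$ is known explicitly. Your contraction would also supply it when $\rho<1$: the sequence $\{P^{km}(x,\cdot)\}_k$ is Cauchy in variation distance, and its limit $\pi_0$ is $P^m$-invariant. Applying the contraction to $\pi_0P$ and $\pi_0$ then forces $\pi_0P=\pi_0$.
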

    \noindent \textbf{Note.} The statement of the theorem in Meyn and Tweedie \cite{MeynTweedie2009} multiplies the upper bound on the variation distance by two. This is due to the fact that Meyn and Tweedie reference the supremum definition of variation distance. The definition of variation distance specified previously allows us to omit this factor of two.\\

    \noindent To prove Theorem~\ref{Hildebrand2002Thm}, Hildebrand \cite{Hildebrand2002} first proved that a path of length $z = cn$ exists from state $y$ to state $j'$ when the underlying probabilities $\pi$ are log-concave.  Hildebrand \cite{Hildebrand2002} defines a \underline{path} as a sequence of transitions in the Diaconis-Holmes-Neal sampler from $y$ to $j'$, and the paths constructed each have one flip or stationary move.  From here, a lower bound on the occurrence probability of the constructed path is found. This lower bound is then used to find a lower bound on the occurrence probability of a path starting at state $y$ and ending in some subset of states of the Markov chain. (Usually this is found by multiplying by some constant or a multiple of $n$.) Finally, Theorem~\ref{MeynTweedie2009Thm} is used with the second lower bound that is found and a limit is taken as $n \to \infty$. \cite{Hildebrand2002}\\

    \noindent Hildebrand \cite{Hildebrand2002} posed the question of whether a similar convergence holds when $\pi$ is unimodal.   Stated differently, if $\pi$ is any probability with a single maximum value, does convergence to stationarity hold in a constant multiple of $n$ steps? Lange \cite{Lange2025a} proved that such a convergence holds in the three simplest cases of $\pi$ being unimodal, namely the simple case, the (symmetric) function of $n$ case, and the asymmetric function of $n$ case, and the ideas were extended to the general symmetric unimodal case in Lange \cite{Lange2025b}. The convergence in each case is being proved by:
    \begin{enumerate}
	   \item Proving that paths of length $x = cn$ exist from $y$ to $j'$,
        \item Finding a lower bound on the occurrence probability of these paths occurring,
        \item Finding a lower bound on the probability that a path of length $x = cn$ starts at $y$ and ends in some subset of states of the Markov chain,
        \item Applying Theorem~\ref{MeynTweedie2009Thm} to the result found in Step 3, and
	   \item Taking the limit of the result as $n \to \infty$.
    \end{enumerate}
    \noindent \textbf{Note.}  While the possibility of having more flips or stationary moves exists, due to the nature of the underlying probabilities, most paths built in Lange \cite{Lange2025a} only have one or two flips or stationary moves. Hildebrand \cite{Hildebrand2002} only considered paths with one. \\

    \noindent In this paper, we answer the question posed by Hildebrand \cite{Hildebrand2002} in the general unimodal case. This is discussed in Section~\ref{generalcase}. Some closing remarks and future research questions are stated in Section~\ref{conclusion}.

    \section{The General Unimodal Case} \label{generalcase}

    \subsection{Introducing the General Unimodal Case}

    \noindent Let $n$ be odd and suppose $\{m_j(n)\}_{j = 1}^n$ is a sequence of real-valued functions on $n$ such that both of the following apply:
    \begin{enumerate}
        \item $m_j(n) > 0$ for every $1 \leq j \leq n$, and
        \item The sequence is unimodal, i.e. for some value $j^*$, $m_1(n)\le m_2(n)\le ...\le m_{j^*}(n)\ge m_{j^*+1}(n)\ge ...\ge m_n(n)$.
    \end{enumerate}
    \noindent Here, the normalizing constant is
    \begin{equation}
        z' = \sum_{j = 1}^n\Biggl[m_j(n)\Biggr],
    \end{equation}
    \noindent and we can define the underlying probabilities by
    \begin{equation}
        \pi(j) = \frac{m_j(n)}{z'}. \label{3.1}
    \end{equation}
    \noindent We need not split the definition into multiple cases because we are not assuming that the sequence is symmetric. As the assumption of symmetry from the general symmetric case is relaxed here, we produce a diagram of the Diaconis-Holmes-Neal sampler and the permissible moves when the underlying probabilities are given by \eqref{3.1}. We also provide the probabilities of each permissible move in the general unimodal case.
    \begin{table}[ht!]
        \centering
        \scalebox{0.75}{\begin{tabular}{| c c | c c | c c |}\hline
            \multirow{ 3}{*}{} & \multirow{ 3}{*}{} & \multirow{ 3}{*}{} & \multirow{ 3}{*}{} & \multirow{ 3}{*}{} & \multirow{ 3}{*}{}\\
            \scalebox{0.4}{\begin{tikzpicture} \draw[->, green, ultra thick] (0,0) -- (2,0); \end{tikzpicture}} & $p = \dfrac{n - 1}{n}$ & \scalebox{0.4}{\begin{tikzpicture} \draw[->, cyan!60, ultra thick] (0,0) -- (2,0); \end{tikzpicture}} & $p = \dfrac{m_{j + 1}(n)}{m_j(n)}\left(\dfrac{n - 1}{n}\right)$ & \scalebox{0.4}{\begin{tikzpicture} \draw[->, cyan!20, ultra thick] (0,0) -- (2,0); \end{tikzpicture}} & $p = \dfrac{m_{j - 1}(n)}{m_j(n)}\left(\dfrac{n - 1}{n}\right)$\\
             & & & & & \\ \hline
            \multirow{ 3}{*}{} & \multirow{ 3}{*}{} & \multirow{ 3}{*}{} & \multirow{ 3}{*}{} & \multirow{ 3}{*}{} & \multirow{ 3}{*}{}\\
            \scalebox{0.4}{\begin{tikzpicture} \draw[->, purple, ultra thick] (0,0) -- (0,2); \end{tikzpicture}} & $p = \dfrac{n - 1}{n}$ & \scalebox{0.4}{\begin{tikzpicture} \draw[->, blue!60, ultra thick] (0,0) -- (0,2); \end{tikzpicture}} & $p = \dfrac{n - 1}{n}\left(1 - \dfrac{m_{j + 1}(n)}{m_j(n)}\right)$ & \scalebox{0.4}{\begin{tikzpicture} \draw[->, blue!20, ultra thick] (0,0) -- (0,2); \end{tikzpicture}} & $p = \dfrac{n - 1}{n}\left(1 - \dfrac{m_{j - 1}(n)}{m_j(n)}\right)$\\
             & & & & & \\ \hline
            \multirow{ 3}{*}{} & \multirow{ 3}{*}{} & \multirow{ 3}{*}{} & \multirow{ 3}{*}{} & \multirow{ 3}{*}{} & \multirow{ 3}{*}{}\\
            \scalebox{0.4}{\begin{tikzpicture} \draw[->, red, ultra thick] (0,0) -- (2,2); \end{tikzpicture}} & $p = \dfrac{1}{n}$ & \scalebox{0.4}{\begin{tikzpicture} \draw[->, red!60, ultra thick] (0,0) -- (2,2); \end{tikzpicture}} & $p = \dfrac{m_{j + 1}(n)}{nm_j(n)}$ & \scalebox{0.4}{\begin{tikzpicture} \draw[->, red!20, ultra thick] (0,0) -- (2,2); \end{tikzpicture}} & $p = \dfrac{m_{j - 1}(n)}{nm_j(n)}$\\
             & & & & & \\ \hline
            \multirow{ 3}{*}{} & \multirow{ 3}{*}{} & \multirow{ 3}{*}{} & \multirow{ 3}{*}{} & \multirow{ 3}{*}{} & \multirow{ 3}{*}{}\\
            \scalebox{0.4}{\begin{tikzpicture} \node(n1) at (39.5,13.5){}; \draw [<-, black, ultra thick] (n1)arc(-60:240:1); \end{tikzpicture}} & $p = \dfrac{1}{n}$ & \scalebox{0.4}{\begin{tikzpicture} \node(n3) at (51.5,13.5){}; \draw [<-, brown!60, ultra thick] (n3)arc(-60:240:1); \end{tikzpicture}} & $p = \dfrac{1}{n}\left(1 - \dfrac{m_{j + 1}(n)}{m_j(n)}\right)$ & \scalebox{0.4}{\begin{tikzpicture} \node(n3) at (51.5,13.5){}; \draw [<-, brown!20, ultra thick] (n3)arc(-60:240:1); \end{tikzpicture}} & $p = \dfrac{1}{n}\left(1 - \dfrac{m_{j - 1}(n)}{m_j(n)}\right)$\\
             & & & & & \\ \hline
        \end{tabular}}
        \caption{The probabilities associated with each possible move in the general unimodal case.}
    \end{table}

    \pagebreak

    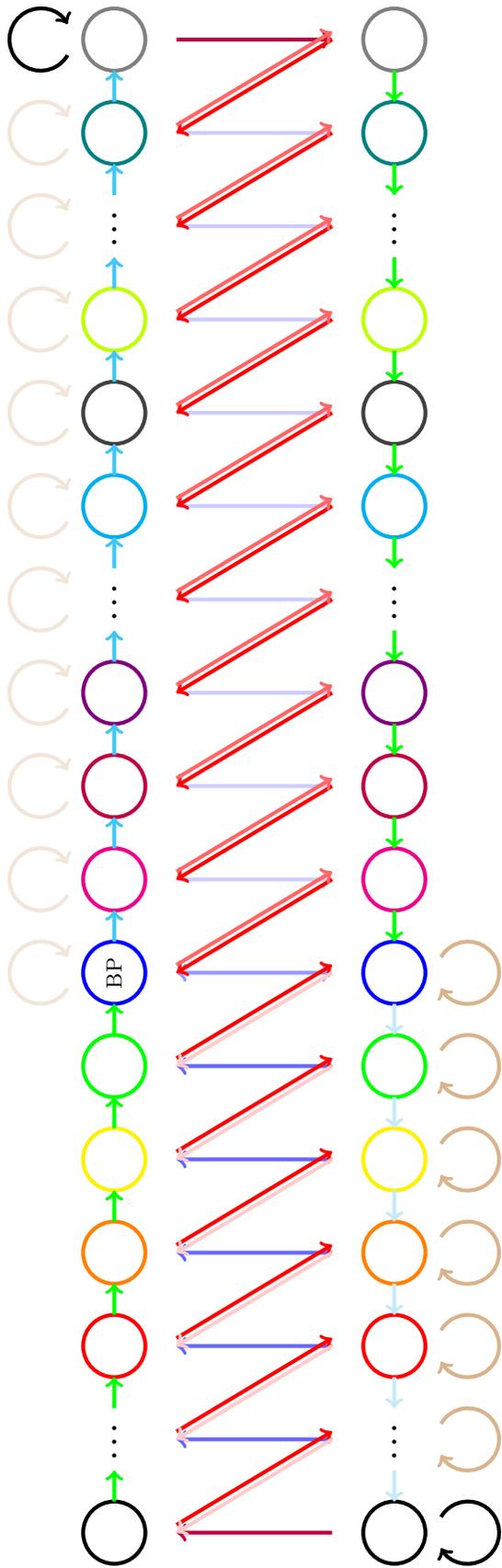
\begin{figure}[ht!]
        \begin{sideways}
            \centering
            \begin{tikzpicture}[scale = 0.45]
                \draw[ultra thick] (3,12) circle [radius = 1];
                \draw[red, ultra thick] (9,12) circle [radius = 1];
                \draw[orange, ultra thick] (12,12) circle [radius = 1];
                \draw[yellow, ultra thick] (15,12) circle [radius = 1];
                \draw[green, ultra thick] (18,12) circle [radius = 1];
                \draw[blue, ultra thick] (21,12) circle [radius = 1];
                \draw[magenta, ultra thick] (24,12) circle [radius = 1];
                \draw[purple, ultra thick] (27,12) circle [radius = 1];
                \draw[violet, ultra thick] (30,12) circle [radius = 1];
                \draw[cyan, ultra thick] (36,12) circle [radius = 1];
                \draw[darkgray, ultra thick] (39,12) circle [radius = 1];
                \draw[lime, ultra thick] (42,12) circle [radius = 1];
                \draw[teal, ultra thick] (48,12) circle [radius = 1];
                \draw[gray, ultra thick] (51,12) circle [radius = 1];

                \draw[ultra thick] (3,3) circle [radius = 1];
                \draw[red, ultra thick] (9,3) circle [radius = 1];
                \draw[orange, ultra thick] (12,3) circle [radius = 1];
                \draw[yellow, ultra thick] (15,3) circle [radius = 1];
                \draw[green, ultra thick] (18,3) circle [radius = 1];
                \draw[blue, ultra thick] (21,3) circle [radius = 1];
                \draw[magenta, ultra thick] (24,3) circle [radius = 1];
                \draw[purple, ultra thick] (27,3) circle [radius = 1];
                \draw[violet, ultra thick] (30,3) circle [radius = 1];
                \draw[cyan, ultra thick] (36,3) circle [radius = 1];
                \draw[darkgray, ultra thick] (39,3) circle [radius = 1];
                \draw[lime, ultra thick] (42,3) circle [radius = 1];
                \draw[teal, ultra thick] (48,3) circle [radius = 1];
                \draw[gray, ultra thick] (51,3) circle [radius = 1];

                \node(basepoint) at (21,12) {\footnotesize BP};
		
			    \foreach \x in {2,11,15}{
			    \node(top) at (3 * \x, 12){\text{\textbf{\dots}}};
			    \node(bottom) at (3 * \x, 3){\text{\textbf{\dots}}};
			    }
		
			    \foreach \x in {1,2,3,4,5,6}{
		        \draw[->, green, ultra thick](3 * \x + 1, 12) -- (3 * \x + 2, 12);
			    }
			    \foreach \x in {7,8,9,10,11,12,13,14,15,16}{
			    \draw[<-, green, ultra thick](3 * \x + 1, 3) -- (3 * \x + 2, 3);
			    }
		
			    \foreach \x in {7,8,9,10,11,12,13,14,15,16}{
                \draw[->, cyan!60, ultra thick] (3 * \x + 1, 12) -- (3 * \x + 2, 12);
                }
                \foreach \x in {1,2,3,4,5,6}{
                \draw[<-, cyan!20, ultra thick] (3 * \x + 1, 3) -- (3 * \x + 2, 3);
                }

                \draw[->, purple, ultra thick] (3,5) -- (3,10);
		        \draw[<-, purple, ultra thick] (51,5) -- (51,10);
			
                \foreach \x in {2,3,4,5,6}{
                \draw[->, blue!60, ultra thick] (3 * \x,5) -- (3 * \x,10);
                }
                \foreach \x in {8,9,10,11,12,13,14,15,16}{
                \draw[<-, blue!20, ultra thick] (3 * \x,5) -- (3 * \x,10);
                }
                \draw[<->, blue!40, ultra thick] (21,5) -- (21,10);
			
			    \foreach \x in {1,2,3,4,5,6}{
		        \draw[->, red, ultra thick] (3 * \x + .25, 10) -- (3 * \x + 3.25, 5);}
			    \foreach \x in {7,8,9,10,11,12,13,14,15,16}{
		        \draw[<-, red, ultra thick] (3 * \x, 10) -- (3 * \x + 3, 5);}
		
		        \foreach \x in {7,8,9,10,11,12,13,14,15,16}{
		        \draw[->, red!60, ultra thick] (3 * \x + .25, 10) -- (3 * \x + 3.25, 5);}
		        \foreach \x in {1,2,3,4,5,6}{
		        \draw[<-, red!20, ultra thick] (3 * \x, 10) -- (3 * \x + 3, 5);}
			
		        \node(n2) at (51.5,13.5){};
	            \node(n3) at (2.5,1.5){};
		        \draw [<-, black, ultra thick] (n2)arc(-60:240:1);
		        \draw [<-, black, ultra thick] (n3)arc(-60:240:-1);

                \foreach \x in {2,3,4,5,6,7}{
                \node(bottom) at (3 * \x - 0.5, 1.5){};
                \draw[<-, brown!60, ultra thick] (bottom)arc(-60:240:-1);
                }
                \foreach \x in {7,8,9,10,11,12,13,14,15,16}{
                \node(top) at (3 * \x + 0.5,13.5){};
                \draw [<-, brown!20, ultra thick] (top)arc(-60:240:1);
                }
		    \end{tikzpicture}
        \end{sideways}
        \caption{A diagram of the general unimodal case. The basepoint is labeled.}
    \end{figure}

    \pagebreak

    \noindent Before we proceed to prove the main result, we need to define the state that will serve as the basepoint.   The basepoint will be $\tilde{j} = (+1, j^*)$.

    \subsection{Proving the Main Result}

    \noindent The objective, here, is to prove the following result.
    \begin{theorem}
        Suppose
        \begin{enumerate}
            \item The Diaconis-Holmes-Neal sampler $\Phi$ on $n$ states, with $n$ odd, has underlying probabilities given by Equation \eqref{3.1},
            \item $\{m_j(n)\}_{j = 1}^n$ is a sequence of real-valued functions such that
            \begin{enumerate}
                \item $m_j(n) > 0$ for every $1 \leq j \leq n$, and
                \item The sequence is unimodal,
            \end{enumerate}
            \item $z'$ is the normalizing constant,
            \item $c$ and $c'$ are separate constants,
            \item $x = cn$, and
            \item $z \geq c'n$.
        \end{enumerate}
        \noindent Then, for some $\hat{c} > 0$ that may depend on $\pi(j)$,
        \begin{equation}
           \lim_{n \to \infty}\Biggl[||P^z(y,A) - \tilde{\pi}||\Biggr] \lesssim \left(1 - \frac{\hat{c}}{e^c}\right)^{\lfloor c'/c \rfloor} \label{3.2}
        \end{equation} \label{mainresult}
    \end{theorem}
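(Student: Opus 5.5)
We will follow the five-step scheme described before Theorem~\ref{mainresult}, with Theorem~\ref{MeynTweedie2009Thm} as the engine. The plan is to exhibit, for every starting state $y=(\epsilon_0,j_0)$, a minorization
\begin{equation*}
P^x(y,\cdot)\geq \nu_x(\cdot),\qquad \nu_x(\text{whole space})\gtrsim \frac{\hat c}{e^c},
\end{equation*}
with $x=cn$ and $\hat c$ independent of $y$. Theorem~\ref{MeynTweedie2009Thm} then gives
\begin{equation*}
\|P^z(y,\cdot)-\tilde\pi\|\le \left(1-\nu_x(\text{whole space})\right)^{\lfloor z/x\rfloor}.
\end{equation*}
Since $z\ge c'n$, we have $\lfloor z/x\rfloor\ge\lfloor c'/c\rfloor$, and letting $n\to\infty$ yields \eqref{3.2}. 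The factor $e^{-c}$ comes from the requirement that, except for a bounded number of designated steps, every step of a length-$cn$ path is a non-flip move. This event has probability $(1-1/n)^{cn}\to e^{-c}$.

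\textbf{Step 1: paths into the basepoint.} From any $y$ we route the chain toward the mode $j^*$ using only shifts and jumps. By unimodality, a shift toward $j^*$ is always accepted with probability $1-\theta$: the green arrows left of $j^*$ on the $+1$ copy and right of $j^*$ on the $-1$ copy. If $y$ points away from $j^*$, the chain shifts away until a jump occurs. The jump probability at $j$ is $(1-\theta)(1-m_{j\pm1}/m_j)$; alternatively the chain reaches the boundary, where the jump has probability $1-\theta$. Afterward it shifts back toward $j^*$. Summing over the (random) location of the turning point, the cumulative probability of turning back before hitting the wall telescopes: the product $\prod (m_{j+1}/m_j)$ of surviving shifts plus the jump probabilities sums to $1-\theta$ times a total. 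So with probability bounded below by a constant times $e^{-c}$, the chain reaches the basepoint region within at most $2n$ moves. The same telescoping idea was used in Lange's earlier papers, and it is what replaces log-concavity.

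\textbf{Step 2: spreading from the basepoint to the target set.} From $\tilde j=(+1,j^*)$ we must reach a set $A$ of states of stationary mass bounded below, say all $(\epsilon,j)$ weighted by $\tilde\pi$. We use one or two flips at a freely chosen time, each costing a factor $\theta=1/n$. Summing over the $\Theta(n)$ possible flip times recovers a constant. Flip times are chosen uniformly, and any "waiting" is absorbed by cycling around $j^*$, where jumps near the mode let the chain oscillate. After the flip, the chain descends the slope with shift acceptance $m_{j\pm1}/m_j$. The product of these acceptances from $j^*$ to $j$ equals $m_j/m_{j^*}$, which is proportional to $\pi(j)$. This is precisely what makes the resulting measure $\nu_x$ a constant multiple of $\tilde\pi$. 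The padding of total length to exactly $x=cn$ uses stationary moves or bounces at the mode. Choosing $c$ large (e.g.\ $c\ge 4$) makes every such schedule fit.

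\textbf{Main obstacle.} The hard part is making $\hat c$ uniform in $y$ and in the shape of $\pi$, since no log-concavity controls how jump probabilities are distributed along the slopes. In particular, a long flat stretch, or a plateau near $j^*$ of arbitrary width, may make the "wait near the mode" step costly. I would handle this first by conditioning on the first jump location and using the telescoping identity above in both directions. Failing that, I would allow $\hat c$ to depend on $\pi$, which the statement permits, by bounding $m_{j^*}/z'$ from below and using the mode mass directly. The asymmetry relative to \cite{Lange2025b} only affects the two slopes separately, and each is handled by the same telescoping argument.
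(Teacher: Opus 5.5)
Your outer framework matches the paper's. You use a minorization at time $x=cn$ fed into Theorem~\ref{MeynTweedie2009Thm}, and a flip-free leg from $y$ to $\tilde j$ (Proposition~\ref{1stcomponent}). You have a descent from $\tilde j$ to $(\epsilon,j)$ carrying the telescoped factor $m_j(n)/m_{j^*}(n)$ (Proposition~\ref{3rdcomponent}), and $e^{-c}$ arises as the limit of $(1-1/n)^{cn-O(1)}$.

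The gap is in your Step 2. The descent factor equals $m_j/m_{j^*}=(2z'/m_{j^*})\,\tilde\pi((\epsilon,j))$, and $\sum_j m_j/m_{j^*}=z'/m_{j^*}$ can be as large as $n$. So a per-target bound of that size cannot hold for all targets at once, and by itself it does not make $\nu_x$ ``a constant multiple of $\tilde\pi$.'' You need a compensating factor of order $\pi(j^*)=m_{j^*}(n)/z'$. Concretely, you need $P_y(X_t=\tilde j)\gtrsim m_{j^*}(n)/z'$, uniformly in $y$, for every launch time $t$ in a window of length of order $n$ below $cn$; the launch time is tied to the target and to ending at exactly time $cn$.

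Your devices only cover the two extremes:
\begin{itemize}
\item A single flip at a chosen time: for a fixed target and terminal time, the flip time is pinned to $O(1)$ values per cycle. It therefore contributes order $1/n$ per target, and summing over the $\Theta(n)$ flip times is really summing over targets. This matches $m_{j^*}/z'$ only when $\pi$ is nearly flat.
\item Bounces near the mode: they give order-one occupation of $\tilde j$ only when $\pi$ drops off near $j^*$.
\item Stationary moves: padding with them costs order $1/n$ per step, so they absorb only $O(1)$ steps.
\end{itemize}
An intermediate case shows the problem: take $m_j(n)$ constant for $|j-j^*|\le n^{1/2}$ and dropping sharply outside. There are no bounces, and the flip-free return time to $\tilde j$ is essentially deterministic, about $4n^{1/2}$. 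The correct factor $\asymp n^{-1/2}$ appears only after you show that the $\Theta(1)$ flips occurring among the roughly $n^{1/2}$ returns before time $cn$ randomize the phase of those returns.

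Your fallback does not close this. $m_{j^*}/z'$ equals $1/n$ for uniform $\pi$, so it has no lower bound uniform in $n$. Letting $\hat c$ depend on $\pi$ also does not help when the limit $n\to\infty$ is taken with $\pi$ itself changing with $n$.

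This timing estimate is exactly what the paper's second component is built for. The paper decomposes the path into intermediate returns to $\tilde j$. It uses the mean recurrence time $\mu_{\tilde j}=2z'/m_{j^*}(n)$ (Corollary~\ref{recurrence2}) and a Markov-type bound on returns inside an overshoot window (Proposition~\ref{returntimes}). It removes overshoot and parity mismatch by inserting flips that shorten a return by an amount uniform on $\{1,3,\dots,2d^{**}-1\}$ (Lemma~\ref{step1}). This yields $P'\ge\tilde{\tilde c}\,m_{j^*}(n)/z'$ (Proposition~\ref{2ndcomponent3rdpiece}). Multiplied by $m_j/m_{j^*}$, this gives a per-target bound proportional to $m_j(n)/z'$, which sums to a constant. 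To complete your argument you need a local renewal lower bound of this kind, with flips inside excursions, rather than bounces at the mode, doing the smoothing.
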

    \noindent \textbf{Note.} Without loss of generality, we will assume that $j^* \leq (n + 1)/2$. Similar arguments will prove our results in the case where $j^* > (n + 1)/2$.\\

    \noindent Before we outline a plan to prove Theorem~\ref{mainresult}, we first provide the following definition.
    \begin{defn}
        A path from $y$ to $j'$ is said to be \underline{ideal} if its length is exactly $cn$ steps, and it is said to have an \underline{overshoot} if its length is longer than $cn$. \label{ideal}
    \end{defn}

    \noindent The steps we follow to prove Theorem~\ref{mainresult} do not change from Lange \cite{Lange2025a}. Step 1, the path construction step, is shown in Section~\ref{sectionstep1}, followed by the lower bound calculations (Steps 2 and 3) in Section~\ref{sectionstep2} and Section~\ref{sectionstep3}. Finally, Steps 4 and 5 are given in Section~\ref{sectionstep4and5}.

    \subsubsection{Step 1} \label{sectionstep1}

    \noindent We start with Step 1, which is to prove the following.
    \begin{lemma}
        Suppose the Diaconis-Holmes-Neal sampler $\Phi$ on $n$ states, with $n$ odd, has underlying probabilities given by Equation \eqref{3.1}, where $\{m_j(n)\}_{j = 1}^n$ is a unimodal sequence of real-valued functions on $n$ that take on positive values, and where $z'$ is the normalizing constant. Then, there exists a constant $c$, which does not depend on $n$, and an ideal path from state $y$ to state $j'$. \label{step1}
    \end{lemma}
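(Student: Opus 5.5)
We will prove Lemma~\ref{step1} by constructing the path explicitly. Every path will use only the moves in the table that are available for every unimodal $\pi$: shifts, jumps and flips (with stationary moves as optional padding). The construction goes through the basepoint $\tilde{j} = (+1, j^*)$ and has at most two flips or stationary moves.

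\textbf{Deterministic route.} Write $y = (\epsilon_0, j_0)$ and $j' = (\epsilon_1, j_1)$. The key structural fact comes from the table. On the $+1$ copy, the shifts to the right of $j^*$ have probability $\frac{m_{j+1}(n)}{m_j(n)}\frac{n-1}{n} \le \frac{n-1}{n}$. Whatever part of that probability fails is exactly the probability of a jump down to the $-1$ copy. The $-1$ copy behaves symmetrically to the left of $j^*$.

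Consequently, a trajectory using only moves of order $1-\theta$ (shifts or jumps) from any state:
\begin{enumerate}
\item travels monotonically in one direction;
\item possibly jumps to the other copy at some site;
\item travels back toward $j^*$, where on the ``uphill'' direction every shift has probability exactly $\frac{n-1}{n}$.
\end{enumerate}

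From $y$, we therefore follow the $\theta$-free moves until the chain is moving uphill toward $j^*$. Once it is moving uphill, it reaches $(+1, j^*)$ or $(-1,j^*)$ deterministically. If it arrives on the wrong copy, we use one jump at $j^*$ or one flip nearby to get onto the $+1$ copy.

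\textbf{Reaching $j'$ from the basepoint.} From $\tilde{j}$ we go right on the $+1$ copy. If $j'$ lies to the right of $j^*$ on the $-1$ copy, we jump at $j_1$ (or, if the jump probability there is too small, travel to the end state $n$, where the jump is forced, and come back). If $j' = (+1, j_1)$ with $j_1 > j^*$, we arrive at it directly. Targets to the left of $j^*$ are handled by first going from $\tilde{j}$ to the $-1$ copy and then shifting left. Where the orientation is wrong, we insert one flip.

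\textbf{Making the length exactly $cn$.} Each leg has length at most $2n$, so the total length is at most a constant times $n$, say $4n+2$. To obtain an ideal path of length exactly $x = cn$ with $c$ independent of $n$ (e.g.\ $c = 5$), we pad using stationary moves at $(+1, n)$ or $(-1,1)$ and jumps at the ends, which are deterministic moves of probability $\frac{n-1}{n}$. Alternatively, we insert round trips $\tilde{j} \to (+1,n) \to (-1,n) \to \cdots$. The oddness of $n$ and the parity of the jumps can be used to fix parity, since a single jump or stationary move changes the step count by one without moving position.

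\textbf{Main obstacle.} The main obstacle is this padding and bookkeeping. We must guarantee that, for every pair $(y, j')$, the number of filler steps is a nonnegative integer and that the filler uses only moves whose probabilities will later be bounded below, namely of order $1$ or exactly $\theta$, never a tiny ratio $m_{j\pm1}(n)/m_j(n)$. To handle this, we would do the padding with end-state jumps and the loop through $n$, and reserve the single stationary move at an endpoint for parity correction. For this lemma itself only existence is needed, so it suffices to exhibit the route case by case according to whether $j_0, j_1$ lie left or right of $j^*$ and which copies they lie on.
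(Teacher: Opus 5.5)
Your route differs from the paper's. The paper fills the middle of the path with intermediate returns to $\tilde{j}$, which are the chain's own excursions. It does not add filler to make up a deficit. Instead it removes the resulting overshoot by inserting a bounded number of flips or stationary moves into intermediate returns at upper-right or lower-left states. Each insertion shortens a return by an odd amount in $\{1,3,\dots,2d^{**}-1\}$, and the parity of the overshoot determines how many returns are shortened.

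For the lemma exactly as stated, your approach is simpler and suffices, because the statement only asserts that some positive-probability path of length $cn$ exists. Every shift has positive probability since $m_j(n)>0$. So you can route $y\to(+1,n)\to j'$ in $L=O(n)$ steps and insert $cn-L$ stationary moves at $(+1,n)$, each of probability $\theta=1/n>0$, with $c$ a large enough integer.

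The filler you actually commit to in your last paragraph does not work, and this is exactly the point the paper's shortening mechanism addresses.
\begin{itemize}
\item Stationary moves at $(+1,n)$ have probability $1/n$, not $\frac{n-1}{n}$. Using $\Theta(n)$ of them therefore costs $n^{-\Theta(n)}$.
\item End-state jumps cannot be chained. After $(+1,n)\to(-1,n)$, the jump probability at $(-1,n)$ is $(1-\theta)\bigl(1-\min\{1,\pi(n-1)/\pi(n)\}\bigr)=0$, because $\pi(n-1)\ge\pi(n)$.
\item The loop $\tilde{j}\to(+1,n)\to(-1,n)\to(-1,1)\to(+1,1)\to\tilde{j}$ has fixed length $2n$. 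Its probability is $(1-\theta)^{2n}m_1(n)m_n(n)/m_{j^*}(n)^2$, so it carries precisely the small ratios you wanted to exclude. Loops plus one extra move also reach only lengths $L+2nk$ or $L+2nk+1$, not an arbitrary target $cn$.
\item A jump cannot repair parity. Shifts and jumps both change $j+\mathbf{1}[\epsilon=-1]$ modulo $2$, while flips and stationary moves do not.
\end{itemize}

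Uniform $\pi$ makes this concrete. The flip-free dynamics is then a deterministic $2n$-cycle, and stationary moves exist only at $(+1,n)$ and $(-1,1)$. So a path of length $cn$ with boundedly many $\theta$-moves exists only if flips are placed at the right sites; a flip at $(+1,j)$ saves $2(n-j)-1$ steps. That freedom in where the flips go, and the binomial counting it produces in Step 2, is what the paper's construction supplies. Your padding scheme does not provide it.
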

    \noindent \textit{Proof.} While we give the entire proof for clarity, it is very similar to the proof of \textbf{Lemma 9} from Lange \cite{Lange2025a}. Here, we have the addendum of splitting the Diaconis-Holmes-Neal sampler into four separate sectors. Namely:
    \begin{enumerate}
        \item A state $w$ is said to be in the \underline{upper-right sector} if $w = (+1,j)$ where $j > j^*$.
        \item A state $w$ is said to be in the \underline{lower-right sector} if $w = (-1,j)$ where $j > j^*$.
        \item A state $w$ is said to be in the \underline{lower-left sector} if $w = (-1,j)$ where $j \leq j^*$.
        \item A state $w$ is said to be in the \underline{upper-left sector} if $w = (+1,j)$ where $j \leq j^*$.
    \end{enumerate}
    \noindent With the sectors defined, we can proceed with the path construction in the general unimodal case. As we did with the asymmetric function of $n$ case, we split a path from $y$ to $j'$ into the standard three components. We construct the first component ($y$ to the basepoint) first, then the third (the basepoint to $j'$), followed by the second (intermediate returns to the basepoint).\\

    \noindent \underline{\textit{First component.}} There are four possibilities here:
    \begin{enumerate}
        \item Suppose $y$ is in the upper-left sector. Then the first component consists of a sub-path entirely made of shift moves. The number of steps this sub-path has is between 0 and $j^* - 1$.
        \item Suppose $y$ is in the lower-left sector. Then the first component consists of a sub-path made of any number of shift moves along with a jump. The number of steps this sub-path has is between 1 and $2j^* - 1$.
        \item Suppose $y$ is in the lower-right sector. The sub-path that the first component consists of has the same construction as the sub-path in the previous case. However, the number of steps this sub-path has is between 2 and $n + j^* - 1$.
        \item Suppose $y$ is in the upper-right sector. Then the first component consists of a sub-path made of several shift moves along with two jumps. The number of steps this sub-path has is between 3 and $2n - 1$.
    \end{enumerate}
    \noindent \underline{\textit{Third component.}} This is constructed very similarly to the first component. However, the sub-path construction is flipped here. More specifically:
    \begin{enumerate}
        \item If $j'$ is in the upper-right sector, the construction is similar to the first scenario of the construction of the first component. The number of steps this sub-path has is between 1 and $n - j^*$.
        \item If $j'$ is in the lower-right sector, the construction is similar to the second scenario of the construction of the first component. The number of steps this sub-path has is between 2 and $2(n - j^*)$.
        \item If $j'$ is in the lower-left sector, the construction is similar to the third scenario of the construction of the first component. The number of steps this sub-path has is between 1 and $2n - j^*$.
        \item If $j'$ is in the upper-left sector, the construction is similar to the fourth scenario of the construction of the first component except if $j'$ is the basepoint, no steps are needed. The number of steps this sub-path has is between 0 and $2n - 1$.
    \end{enumerate}
    \noindent \underline{\textit{Second component.}} As was the case with the general symmetric case, we consider the following two possibilities:
    \begin{enumerate}
        \item Suppose an intermediate return does not have a flip or a stationary move. Then, the number of steps this intermediate return will have is between 2 and $2n$.
        \item Suppose an intermediate return has exactly one flip or stationary move. Then, the number of steps this intermediate return will have is between 3 and $4n - 2j^* - 1$.
    \end{enumerate}
    \noindent From here, we ``glue" the components together as Lange \cite{Lange2025a} did in the symmetric function of $n$ case. However, this may lead to the path possibly having an overshoot. A diagram of what could happen when ``gluing" is shown in Figure~\ref{glueovershoot}.\\
    
    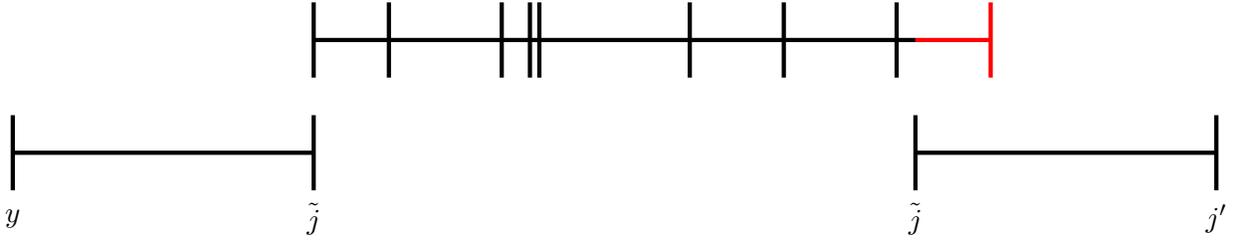
\begin{figure}[ht!]
        \centering
        \begin{tikzpicture}
            \draw[ultra thick] (0,-0.5) -- (0,0.5);
            \node(n1) at (0,-0.875) {$y$};
            \draw[ultra thick] (4,-0.5) -- (4,0.5);
            \node(n2) at (4,-0.875) {$\tilde{j}$};
            \draw[ultra thick] (0,0) -- (4,0);
            \draw[ultra thick] (12,-0.5) -- (12,0.5);
            \node(n3) at (12,-0.875) {$\tilde{j}$};
            \draw[ultra thick] (16,-0.5) -- (16,0.5);
            \node(n4) at (16,-0.875) {$j'$};
            \draw[ultra thick] (12,0) -- (16,0);
            \draw[ultra thick] (4,1) -- (4,2);
            \draw[ultra thick] (5,1) -- (5,2);
            \draw[ultra thick] (6.5,1) -- (6.5,2);
            \draw[ultra thick] (6.875,1) -- (6.875,2);
            \draw[ultra thick] (7,1) -- (7,2);
            \draw[ultra thick] (9,1) -- (9,2);
            \draw[ultra thick] (10.25,1) -- (10.25,2);
            \draw[ultra thick] (11.75,1) -- (11.75,2);
            \draw[red, ultra thick] (13,1) -- (13,2);
            \draw[ultra thick] (4,1.5) -- (12,1.5);
            \draw[red, ultra thick] (12,1.5) -- (13,1.5);
        \end{tikzpicture}
        \caption{A diagram of an attempt to ``glue" the first component and the third component with a second component containing no flips or stationary moves. The overshoot that is likely to occur is shown in \textcolor{red}{red}.}
        \label{glueovershoot}
    \end{figure}
    \noindent If an overshoot exists, we will need to shorten one or more intermediate returns by the amount of the overshoot, or the difference between the length of the path and $cn$. How does this ``shortening" work? We will either...
    \begin{itemize}
        \item ...shorten one or more intermediate returns with a flip or stationary move, or 
        \item ...include a flip or stationary move in one or more intermediate returns without one. 
    \end{itemize}

    \newpage
    
    \noindent In most cases, we will include a flip or stationary move in an intermediate return without one. The flips or stationary moves are placed so that the intermediate returns take fewer steps. A diagram of shortening an intermediate return that already has a flip or stationary move is shown in Figure~\ref{shortwithflip}.\\
    
    \begin{figure}[ht!]
        \centering
        \begin{tikzpicture}
            \draw[ - , green, ultra thick] (-4,3) -- (2,3);
            \draw[ - , ultra thick] (2,3) -- (4,3);
            \draw[ - , ultra thick] (4,3) -- (5,0);
            \draw[ - , green, ultra thick] (-4,0) -- (3,0);
            \draw[ - , ultra thick] (3,0) -- (5,0);
            \draw[ - , green, ultra thick] (-4,0) -- (-4,3);
            \draw[ - , green, ultra thick] (2,3) -- (3,0);
            \draw[red, fill = red] (0,3) circle [radius = 0.2];
            \draw[ <- , green, ultra thick] (3,1.5) -- (4,1.5);
        \end{tikzpicture}
        \caption{A representation of shortening an intermediate return with a flip or stationary move. The \textcolor{red}{red} dot represents the basepoint.}
        \label{shortwithflip}
    \end{figure}
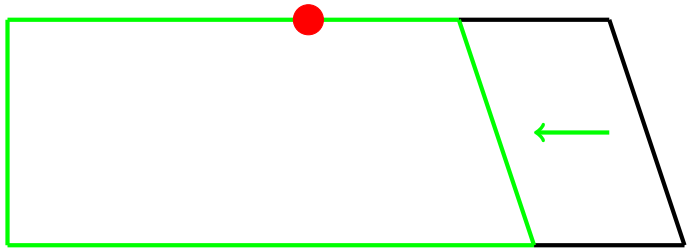
    \noindent We next show how an intermediate return that does not have a flip or stationary move can be shortened by including one. A diagram of this is shown in Figure~\ref{shortwithoutflip}.\\
    
    \begin{figure}[ht!]
        \centering
        \begin{tikzpicture}
            \draw[ - , green, ultra thick] (-4,3) -- (2,3);
            \draw[ - , red, ultra thick] (2,3) -- (5,3);
            \draw[ - , red, ultra thick] (5,3) -- (5,0);
            \draw[ - , green, ultra thick] (-4,0) -- (3,0);
            \draw[ - , red, ultra thick] (3,0) -- (5,0);
            \draw[ - , green, ultra thick] (-4,0) -- (-4,3);
            \draw[ - , green, ultra thick] (2,3) -- (3,0);
            \draw[red, fill = red] (0,3) circle [radius = 0.2];
            \draw[ <- , green, ultra thick] (3,1.5) -- (4.5,1.5);
        \end{tikzpicture}
        \caption{A representation of shorting an intermediate return by including a flip or stationary move. The \textcolor{red}{red} dot represents the basepoint.}
        \label{shortwithoutflip}
    \end{figure}
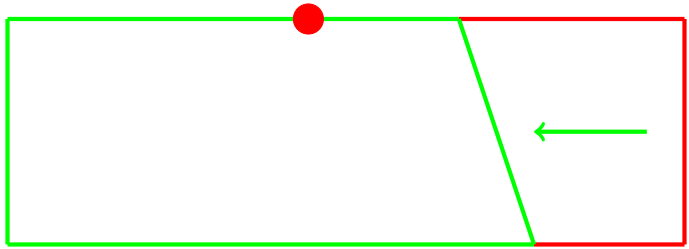
    \noindent These acts of shortening will only serve to make the constructed path at least as likely to occur conditioned on where the flips do and do not occur. This can be described more precisely as follows. Let $L_1$ be the event the path taken by the intermediate return has no flips or stationary moves, transitions from the $+1$ copy to the $-1$ copy on step number $k_0$,
    and transitions from the $-1$ copy to the $+1$ copy at $(-1,k')$,
    and let $L_2$ be the event the path taken by the path has a flip on step number $k$ (where $k<k_0$), has no other transition from the $+1$ copy to the $-1$ copy, 
    has no other flip or stationary move, and transitions from the $-1$ copy to the $+1$ copy at $(-1,k')$.
    Let $A_1$ be the event that occurs when there are no flips or stationary moves in the intermediate return, and let $A_2$ be the event there is precisely one flip or stationary move in the intermediate return and this flip or stationary move occurs on step number $k$. Then $P(L_1|A_1)\le P(L_2|A_2)$.\\
    
    \noindent This all being said, there are a few things we need to be aware of:
    \begin{enumerate}
        \item The case where a flip or stationary move is contained in the last intermediate return can be ignored, as the probability of this occurring is bounded above by a positive constant smaller than one.
        \item If the amount of the overshoot is larger than $3n - 1$, we must shorten at least two intermediate returns to obtain a path of length $x = cn$.
        \item If the amount of the overshoot is an even integer, an even number of intermediate returns (usually two) will need to be shortened.
        \item Any number of intermediate returns may be shortened by including flips or stationary moves, or by moving flips or stationary moves. Nowhere in the proof of this lemma is there a requirement to shorten as few intermediate returns as necessary.
        \item Using a flip or stationary move to shorten an intermediate return without one allows us to shorten the intermediate return by up to $2d^{**} - 1$ steps, where $1 \leq d_1 < j^* < d_2 \leq n$ are such that all of the following apply:
        \begin{enumerate}
            \item $d_2 > j^*$ is as large as possible such that
            \begin{equation}
                \sum_{j^* \leq t_2 \leq d_2}\left[(2(t_2 - j^*) + 1)\left(1 - \frac{m_{t_2}(n)}{m_{j^*}(n)}\right)\right] \leq \frac{1}{2}\sum_{j^* \leq t_2 \leq n}\left[(2(t_2 - j^*) + 1)\left(1 - \frac{m_{t_2}(n)}{m_{j^*}(n)}\right)\right]. \label{3.3}
            \end{equation}
            \noindent If this fails for every $d_2 > j^*$, we set $d_2 = j^* + 1$.
            \item $d_1 < j^*$ is as small as possible such that
            \begin{equation}
                \sum_{d_1 \leq t_1 \leq j^*}\left[(2(j^* - t_1) + 1)\left(1 - \frac{m_{t_1}(n)}{m_{j^*}(n)}\right)\right] \leq \frac{1}{2}\sum_{1 \leq t_1 \leq j^*}\left[(2(j^* - t_1) + 1)\left(1 - \frac{m_{t_1}(n)}{m_{j^*}(n)}\right)\right].\label{3.4}
            \end{equation}
            \noindent Likewise, if this fails for every $d_1 < j^*$, we set $d_1 = j^* - 1$.
            \item $d^{**}$ is such that
            \begin{equation}\label{d2stars}
                d^{**}=\max\{d_2-j^{*},j^{*}-d_1\}
            \end{equation}
        \end{enumerate}
        \item With positive probability,
        \begin{enumerate}
            \item Most intermediate returns will not have ``many" steps, and
            \item The ``longer" intermediate returns will take the ``majority" of the steps in the second component.
        \end{enumerate}
    \end{enumerate}
    \noindent What is meant by ``many steps", ``longer", and ``majority of the steps" is completely determined by the underlying probability distribution $\pi(j)$. Because of conclusions 6(a) and 6(b), it is more likely than not that including a flip or stationary move anywhere will lead to the flip or stationary move appearing in the longer intermediate returns.\\

    \noindent Shortening an appropriate number of intermediate returns by including a flip or stationary move in apprpriate locations in each, we obtain a path that has exactly $cn$ steps. Thus, the path is ideal. We are done.\qed\\

    \noindent \textbf{Note.}
    \begin{enumerate}
        \item Equation \eqref{3.3} gives $d_2$ such that the weighted sum of shifting $d_2 - j^*$ steps, jumping, and then shifting another $d_2 - j^*$ steps is less than or equal to 50\% of the weighted sum of shifting $(n - 1)/2$ steps, jumping, and then shifting another $(n - 1)/2$ steps. The same logic can be applied to Equation \eqref{3.4}, $d_1$, and $j^* - d_1$. Observe
        \begin{equation}
            d^{**} \geq \frac{d_2 - d_1}{2}.
        \end{equation}
        \noindent Furthermore, because of conclusions 6(a) and 6(b), we can conclude that, with probability bounded below by the positive constant $c''$, it is equally likely to shorten an intermediate return without a flip or stationary move by any odd number of steps up to and including $2d^{**}- 1$ when including one. Here, $c''$ is related to $d_1$ and $d_2$, which by Equation \eqref{3.3} and Equation \eqref{3.4} implies that $c''$ is related to the underlying probabilities. Why is any of this true? Well, consider the diagram in Figure~\ref{intermediatereturns}.
        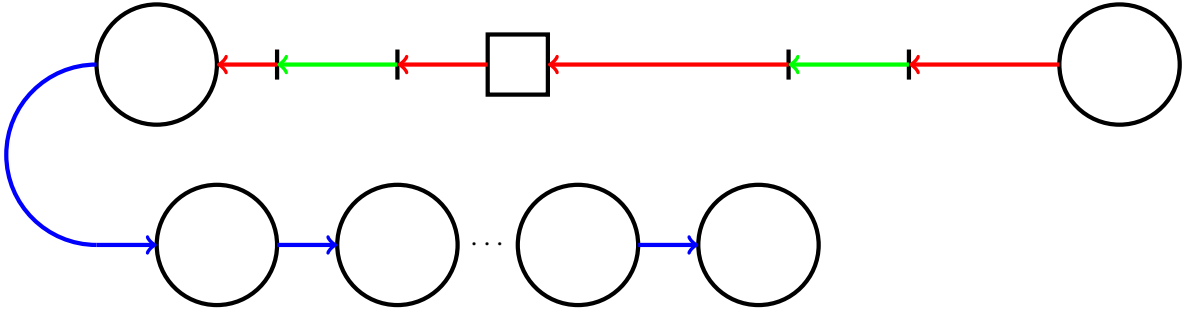
\begin{figure}[ht!]
            \centering
            \begin{tikzpicture}[scale = 0.8]
                \draw[ultra thick] (1,3) circle [radius = 1]; 
                \draw[ultra thick] (17,3) circle [radius = 1]; 
                \draw[ultra thick] (2,0) circle [radius = 1]; 
                \draw[ultra thick] (5,0) circle [radius = 1]; 
                \draw[ultra thick] (8,0) circle [radius = 1]; 
                \draw[ultra thick] (11,0) circle [radius = 1]; 

                \draw[ultra thick] (6.5,2.5) rectangle (7.5,3.5); 

                \node(dots) at (6.53125,0) {$\cdots$}; 
            
                \draw[ultra thick] (13.5,2.75) -- (13.5,3.25); 
                \draw[ultra thick] (11.5,2.75) -- (11.5,3.25); 
                \draw[ultra thick] (5,2.75) -- (5,3.25); 
                \draw[ultra thick] (3,2.75) -- (3,3.25); 

                \draw[->, red, ultra thick] (16,3) -- (13.5,3); 
                \draw[->, red, ultra thick] (11.5,3) -- (7.5,3); 
                \draw[->, red, ultra thick] (6.5,3) -- (5,3); 
                \draw[->, red, ultra thick] (3,3) -- (2,3); 
                \draw[->, blue, ultra thick] (0,0) -- (1,0); 
                \draw[->, blue, ultra thick] (3,0) -- (4,0); 
                \draw[->, blue, ultra thick] (9,0) -- (10,0); 

                \draw[blue, ultra thick] (0,3) arc (90:270:1.5);

                \draw[->, green, ultra thick] (13.5,3) -- (11.5,3);
                \draw[->, green, ultra thick] (5,3) -- (3,3);
            \end{tikzpicture}
            \caption{A diagram of several intermediate returns. The basepoint is represented by black circles and its opposite is represented by black rectangles.}
            \label{intermediatereturns}
        \end{figure}
        \noindent In the diagram, one can see a number of lines in \textcolor{red}{\textbf{red}}, \textcolor{blue}{\textbf{blue}}, and \textcolor{green}{\textbf{green}}. The lines in \textcolor{red}{\textbf{red}} represent states visited along a ``longer" intermediate return with no flips or stationary moves where including one is not optimal. Most of the states represented by the \textcolor{red}{\textbf{red}} lines are in the lower-right sector or the upper-left sector, where including a flip or stationary move will only serve to lengthen the intermediate return.\\

        \noindent Also, the lines in \textcolor{blue}{\textbf{blue}} represent intermediate returns that are ``too short". Put differently, when randomly selecting a state along our path to include a flip or stationary move while shortening, it is less likely that a flip or stationary move will be included in these intermediate returns, as opposed to the ``longer" intermediate returns.\\

        \noindent Finally, the lines in \textcolor{green}{\textbf{green}} represent states visited along a ``longer" intermediate return with no flips or stationary moves where including one is optimal. These states are located in the upper-right sector or the lower-left sector, where including a flip or stationary move will shorten the intermediate return by one of $\{1, 3, 5, \dots, 2d^{**}-1\}$ shift moves from either $\tilde{j}$ (for the upper-right sector) or from its opposite (for the lower-left sector).\\
        
        \noindent As future arguments will show, it is equally likely that a flip or stationary move can be included at any state represented by the \textcolor{green}{\textbf{green}} lines.
        \item It can be shown, by contradiction, that as $n \to \infty$ the number of states represented by \textcolor{green}{\textbf{green}} lines is equal to some positive constant multiple of the number of states represented by \textcolor{red}{\textbf{red}} lines. To see why, we select state $q = (\pm 1, q')$, where $q' \geq j^*$. Assume $d_2 - j^* \geq j^* - d_1$, and suppose $V_2$ is such that
        \begin{equation}
            P(V_2 > 2(10(d_2 - j^*)) + 1) > 0.4.
        \end{equation}
        \noindent Then
        \begin{align}
            \sum_{k = 10(d_2 - j^*) + 1}^{n - j^*}\left[(2k + 1)\left(1 - \frac{m_{j^* + k}(n)}{m_{j^*}(n)}\right)\right] &\geq 2(10(d_2 - j^*) + 1)\left(\frac{4}{10}\right)\\
            &= \frac{80(d_2 - j^*) + 8}{10}.
        \end{align}
        \noindent However,
        \begin{equation}
            \sum_{t_2 = j^*}^{d_2 - j^* + 1}\left[(2(t_2 - j^*) + 1)\left(1 - \frac{m_{t_2}(n)}{m_{j^*}(n)}\right)\right] \leq 2(d_2 - j^* + 1) + 1.
        \end{equation}
        \noindent This provides a contradiction since
        \begin{align}
            \sum_{t_2 = j^*}^{d_2 - j^* + 1}\left[(2(t_2 - j^*) + 1)\left(1 - \frac{m_{t_2}(n)}{m_{j^*}(n)}\right)\right] &\geq \frac{1}{2}\sum_{t_2 = j^*}^n\left[(2(t_2 - j^*) + 1)\left(1 - \frac{m_{t_2}(n)}{m_{j^*}(n)}\right)\right]\\
            &\geq \frac{80(d_2 - j^*) + 8}{10}.
        \end{align}
        \noindent Because of this contradiction, our conclusion can be drawn with respect to $d_2$. Similar arguments show the same contradiction is found with similar definitions for $V_1$ and $d_1$ if $j^* - d_1 \geq d_2 - j^*$.
        \item Using the definition of $d^{**}$ provided in Equation \eqref{d2stars}, and for $b_1 < b_2$, $(b_2 - b_1)d^{**} > 2(10d^{**}) + 1$, and with probability bounded below by a positive constant, at least one intermediate return will appear in some number $t$ steps, where $b_1d^{**} \leq t \leq b_2d^{**}$.

        \item By the properties of Markov chains, every intermediate return is independent of every other intermediate return. Because of this:
        \begin{enumerate}
            \item Shortening any intermediate return by including a flip or stationary move will not affect any other intermediate return,
            \item Because any state represented by \textcolor{green}{\textbf{green}} lines in Figure~\ref{intermediatereturns} is equally likely for a flip or stationary move to be included at while shortening an intermediate return:
            \begin{enumerate}
                \item The number of steps a single intermediate return is shortened by, after including a flip or stationary move, has a discrete uniform distribution on the discrete set $\{1, 3, 5, \dots, 2d^{**} - 1\}$ with probability $p = (d^{**})^{-1}$. A diagram of the probability mass function is shown in Figure~\ref{oneflip}.
                \begin{figure}[ht!]
                    \centering
                    \begin{tikzpicture}
                        \draw[<->, ultra thick] (0,0) -- (8.5,0);

                        \foreach \x in {0.5,0.75,1,1.25,1.5,1.75,2,2.25,2.5,2.75,3,3.25,3.5,3.75,4,4.25,4.5,4.75,5,5.25,5.5,5.75,6,6.25,6.5,6.75,7,7.25,7.5,7.75,8} {
                        \draw[red, fill = red] (\x,0.5) circle [radius = 0.1];
                        }
                        \draw (0.5,-4pt) -- (0.5,4pt) node[below=8pt] {$1$};
                        \draw (0.75,-4pt) -- (0.75,4pt) node[below=8pt] {$3$};
                        \draw (1,-4pt) -- (1,4pt) node[below=8pt] {$5$};
                        \draw (8,-4pt) -- (8,4pt) node[below=8pt] {$2d^{**}- 1$};
                        \draw (-4pt,0.5) -- (4pt,0.5) node[left=8pt] {$(d^{**})^{-1}$};
                    \end{tikzpicture}
                    \caption{A diagram of the probability mass function for shortening one intermediate return by including a flip or stationary move.}
                    \label{oneflip}
                \end{figure}
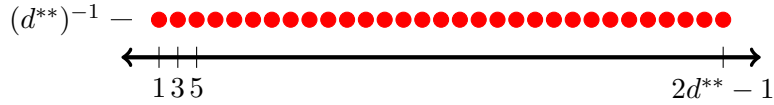
                \item The total number of steps two independent intermediate returns are shortened by, after including two flips and/or stationary moves, does not have a discrete uniform distribution on the discrete set $\{2, 4, 6, \dots, 4d^{**} - 2\}$. Instead, the distribution is a combination of two independent discrete uniform distributions on the same discrete set and, thus, is symmetric. Namely:
                \begin{enumerate}
                    \item The probability that the total number of steps the two intermediate returns are shortened by equals two is equal to $(d^{**})^{-2}$, which is the same as the probability that the total number of steps the two intermediate returns are shortened by equals $4d^{**} - 2$,
                    \item The probability that the total number of steps the two intermediate returns are shortened by equals four is equal to $2(d^{**})^{-2}$, which is the same as the probability that the total number of steps the two intermediate returns are shortened by equals $4d^{**}-4$,
                    \item A pattern exists where the probability that the two intermediate returns are shortened by $b$ steps is the same as the probability that the two intermediate returns are shortened by $4d^{**} - b$ steps.
                \end{enumerate}
                \noindent A diagram of the probability mass function is shown in Figure~\ref{twoflips}.
                \begin{figure}[ht!]
                    \centering
                    \begin{tikzpicture}
                        \draw[<->, ultra thick] (0,0) -- (8.5,0);

                        \draw[red, fill = red] (0.5,0.5) circle [radius = 0.1];
                        \draw[red, fill = red] (0.75,0.625) circle [radius = 0.1];
                        \draw[red, fill = red] (1,0.75) circle [radius = 0.1];
                        \draw[red, fill = red] (1.25,0.875) circle [radius = 0.1];
                        \draw[red, fill = red] (1.5,1) circle [radius = 0.1];
                        \draw[red, fill = red] (1.75,1.125) circle [radius = 0.1];
                        \draw[red, fill = red] (2,1.25) circle [radius = 0.1];
                        \draw[red, fill = red] (2.25,1.375) circle [radius = 0.1];
                        \draw[red, fill = red] (2.5,1.5) circle [radius = 0.1];
                        \draw[red, fill = red] (2.75,1.625) circle [radius = 0.1];
                        \draw[red, fill = red] (3,1.75) circle [radius = 0.1];
                        \draw[red, fill = red] (3.25,1.875) circle [radius = 0.1];
                        \draw[red, fill = red] (3.5,2) circle [radius = 0.1];
                        \draw[red, fill = red] (3.75,2.125) circle [radius = 0.1];
                        \draw[red, fill = red] (4,2.25) circle [radius = 0.1];
                        \draw[red, fill = red] (4.25,2.375) circle [radius = 0.1];
                        \draw[red, fill = red] (4.5,2.25) circle [radius = 0.1];
                        \draw[red, fill = red] (4.75,2.125) circle [radius = 0.1];
                        \draw[red, fill = red] (5,2) circle [radius = 0.1];
                        \draw[red, fill = red] (5.25,1.875) circle [radius = 0.1];
                        \draw[red, fill = red] (5.5,1.75) circle [radius = 0.1];
                        \draw[red, fill = red] (5.75,1.625) circle [radius = 0.1];
                        \draw[red, fill = red] (6,1.5) circle [radius = 0.1];
                        \draw[red, fill = red] (6.25,1.375) circle [radius = 0.1];
                        \draw[red, fill = red] (6.5,1.25) circle [radius = 0.1];
                        \draw[red, fill = red] (6.75,1.125) circle [radius = 0.1];
                        \draw[red, fill = red] (7,1) circle [radius = 0.1];
                        \draw[red, fill = red] (7.25,0.875) circle [radius = 0.1];
                        \draw[red, fill = red] (7.5,0.75) circle [radius = 0.1];
                        \draw[red, fill = red] (7.75,0.625) circle [radius = 0.1];
                        \draw[red, fill = red] (8,0.5) circle [radius = 0.1];

                        \draw (0.5,-4pt) -- (0.5,4pt) node[below=8pt] {$2$};
                        \draw (0.75,-4pt) -- (0.75,4pt) node[below=8pt] {$4$};
                        \draw (1,-4pt) -- (1,4pt) node[below=8pt] {$6$};
                        \draw (8,-4pt) -- (8,4pt) node[below=8pt] {$4d^{**} - 2$};
                    \end{tikzpicture}
                    \caption{A diagram of the probability mass function for shortening two intermediate returns by including flips and/or stationary moves.}
                    \label{twoflips}
                \end{figure}
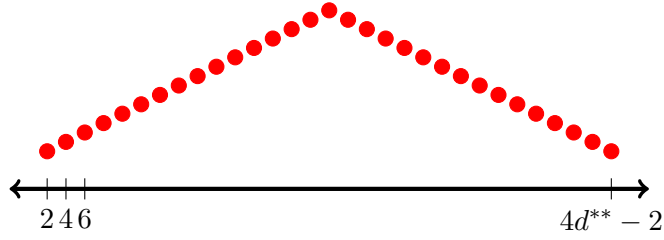
                \item The distribution of the total number of steps three independent intermediate returns are shortened by, after including three flips and/or stationary moves, behave similarly to the distribution of the total number of steps two independent intermediate returns are shortened by while shortening. The discrete set, however, is $\{3, 5, 7, \dots, 6d^{**} - 3\}$. Specifically:
                \begin{enumerate}
                    \item The probability that the total number of steps the three intermediate returns are shortened by equals three is equal to $(d^{**})^{-3}$, which is the same as the probability that the total number of steps the three intermediate returns are shortened by equals $6d^{**} - 3$,
                    \item The probability that the total number of steps the three intermediate returns are shortened by equals five is equal to $3(d^{**} )^{-3}$, which is the same as the probability that the total number of steps the three intermediate returns are shortened by equals $6d^{**} -5$,
                    \item A pattern exists where the probability that the three intermediate returns are shortened by $b$ steps is the same as the probability that the three intermediate returns are shortened by $6d^{**}  - b$ steps.
                \end{enumerate}
                \noindent A diagram of the probability mass function is shown in Figure~\ref{threeormoreflips}.
                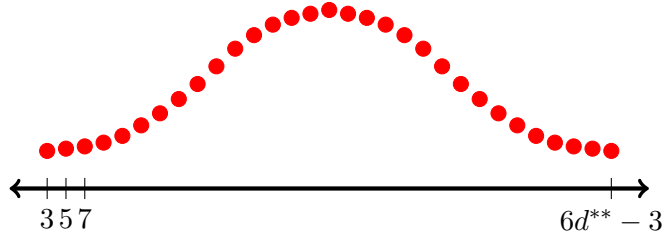
\begin{figure}[ht!]
                    \centering
                    \begin{tikzpicture}
                        \draw[<->, ultra thick] (0,0) -- (8.5,0);

                        \draw[red, fill = red] (0.5,0.5) circle [radius = 0.1];
                        \draw[red, fill = red] (0.75,0.53) circle [radius = 0.1];
                        \draw[red, fill = red] (1,0.56) circle [radius = 0.1];
                        \draw[red, fill = red] (1.25,0.61) circle [radius = 0.1];
                        \draw[red, fill = red] (1.5,0.7) circle [radius = 0.1];
                        \draw[red, fill = red] (1.75,0.84) circle [radius = 0.1];
                        \draw[red, fill = red] (2,1) circle [radius = 0.1];
                        \draw[red, fill = red] (2.25,1.19) circle [radius = 0.1];
                        \draw[red, fill = red] (2.5,1.39) circle [radius = 0.1];
                        \draw[red, fill = red] (2.75,1.625) circle [radius = 0.1];
                        \draw[red, fill = red] (3,1.86) circle [radius = 0.1];
                        \draw[red, fill = red] (3.25,2.04) circle [radius = 0.1];
                        \draw[red, fill = red] (3.5,2.18) circle [radius = 0.1];
                        \draw[red, fill = red] (3.75,2.27) circle [radius = 0.1];
                        \draw[red, fill = red] (4,2.325) circle [radius = 0.1];
                        \draw[red, fill = red] (4.25,2.375) circle [radius = 0.1];
                        \draw[red, fill = red] (4.5,2.325) circle [radius = 0.1];
                        \draw[red, fill = red] (4.75,2.27) circle [radius = 0.1];
                        \draw[red, fill = red] (5,2.18) circle [radius = 0.1];
                        \draw[red, fill = red] (5.25,2.04) circle [radius = 0.1];
                        \draw[red, fill = red] (5.5,1.86) circle [radius = 0.1];
                        \draw[red, fill = red] (5.75,1.625) circle [radius = 0.1];
                        \draw[red, fill = red] (6,1.39) circle [radius = 0.1];
                        \draw[red, fill = red] (6.25,1.19) circle [radius = 0.1];
                        \draw[red, fill = red] (6.5,1) circle [radius = 0.1];
                        \draw[red, fill = red] (6.75,0.84) circle [radius = 0.1];
                        \draw[red, fill = red] (7,0.7) circle [radius = 0.1];
                        \draw[red, fill = red] (7.25,0.61) circle [radius = 0.1];
                        \draw[red, fill = red] (7.5,0.56) circle [radius = 0.1];
                        \draw[red, fill = red] (7.75,0.53) circle [radius = 0.1];
                        \draw[red, fill = red] (8,0.5) circle [radius = 0.1];

                        \draw (0.5,-4pt) -- (0.5,4pt) node[below=8pt] {$3$};
                        \draw (0.75,-4pt) -- (0.75,4pt) node[below=8pt] {$5$};
                        \draw (1,-4pt) -- (1,4pt) node[below=8pt] {$7$};
                        \draw (8,-4pt) -- (8,4pt) node[below=8pt] {$6d^{**} - 3$};
                    \end{tikzpicture}
                    \caption{A diagram of the probability mass function for shortening three intermediate returns by including flips and/or stationary moves.}
                    \label{threeormoreflips}
                \end{figure}
                \item A pattern exists for shortening $a'$ intermediate returns by including $a'$ flips and/or stationary moves. For the first $a' - 1$ shortened intermediate returns, we can choose at least $\tilde{a}d^{**}/(2a')$ values from $\{1, 3, 5, \dots, 2d^{**} - 1\}$ such that the last intermediate return to be shortened will also be shortened by a length whose value is in $\{1, 3, 5, \dots, 2d^{**} - 1\}$. In addition:
                \begin{itemize}
                    \item There is a parity issue that needs to be considered. If $a'$ is an odd integer, then the length of the overshoot must be an odd integer. Likewise, if $a'$ is an even integer, then the length of the overshoot must be an even integer.
                    \item The discrete set of possible values for the total number of steps $a'$ intermediate returns are shortened by is $\{a', a' + 2, a' + 4, \dots, a'(2d^{**} - 1)\}$.
                \end{itemize}
                \noindent A diagram of the probability mass function is shown in Figure~\ref{aflips}.
                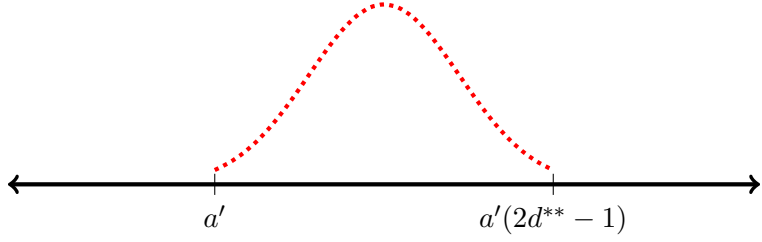
\begin{figure}[ht!]
                    \centering
                    \begin{tikzpicture}
                        \draw[red, smooth, dotted, ultra thick, domain = 2.75:7.25] plot (\x, {6*exp((-1)*(\x - 5)*(\x - 5)/2)/sqrt(2*pi)});

                        \draw[<->, ultra thick] (0,0) -- (10,0);

                        \draw (2.75,-4pt) -- (2.75,4pt) node[below=8pt] {$a'$};
                        \draw (7.25,-4pt) -- (7.25,4pt) node[below=8pt] {$a'(2d^{**} - 1)$};
                    \end{tikzpicture}
                    \caption{A diagram of the probability mass function for shortening $a'$ intermediate returns by including flips and/or stationary moves.}
                    \label{aflips}
                \end{figure}
            \end{enumerate}
        \end{enumerate}
        \noindent Because of these, we may conclude that the number of ways to shorten $a'$ intermediate returns by a possible total number of steps in $[\tilde{a}d^{**}, (a' - \tilde{a})d^{**}]$, where $0 < \tilde{a} < a'/2$, is bounded below by some positive constant, $c'''$, multiple of $\left(d^{**}\right)^{a' - 1}$.
    \end{enumerate}

    \subsubsection{Step 2} \label{sectionstep2}

    \noindent We now proceed to Step 2, the calculation of a lower bound on the occurrence probability of the constructed path from Step 1. Let $\{X_{j''}\}_{j'' = 0}^{cn}$ be the collection of states of $\Phi$ that are attained at each step (with $X_0 = y$ and $X_{cn} = j'$). We begin with finding a lower bound on the occurrence probability of the first component.
    \begin{propn}
        Assume the requirements of Lemma~\ref{step1} hold true, and suppose $E$ is the event that occurs when no flips or stationary moves occur in the first $2n - 1$ steps. Then
        \begin{equation}
            \sum_{t = 0}^{2n - 1}\Biggl[P(X_t = \tilde{j}\wedge X_l \neq \tilde{j}\text{ }\forall\text{ }l < t\text{ $|$ }X_0 = y)\Biggr] \geq P(E) \geq \left(\frac{n - 1}{n}\right)^{2n - 1}. \label{3.6}
        \end{equation} \label{1stcomponent}
    \end{propn}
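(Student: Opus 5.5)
The left-hand sum is the probability that the chain, started at $y$, first hits $\tilde{j}$ at some time $t \le 2n-1$. This is because the events $\{X_t = \tilde{j}\text{ and } X_l \neq \tilde{j}\ \forall\, l < t\}$ for different $t$ are disjoint. So the first inequality will follow once we show that $E$ forces a visit to $\tilde{j}$ within the first $2n-1$ steps. The second inequality is a direct computation of $P(E)$.

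\textbf{Computing $P(E)$.} At each step the sampler decides, independently of the past, between the ``$1-\theta$'' branch (a shift or jump) and the ``$\theta$'' branch (a flip or stationary move). This follows from the description of Steps 2 and 3 of the algorithm: the two branches have probabilities $1-\theta$ and $\theta$ irrespective of the current state. Hence, with $\theta = n^{-1}$,
\begin{equation*}
P(E) = (1-\theta)^{2n-1} = \left(\frac{n-1}{n}\right)^{2n-1},
\end{equation*}
and in particular $P(E) \geq \left(\frac{n-1}{n}\right)^{2n-1}$.

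\textbf{Showing $E$ forces a hit of $\tilde{j}$.} On $E$, the chain moves deterministically by shifts and jumps: it shifts whenever $\pi$ does not decrease in the current direction, and otherwise it either shifts or jumps. I will follow the sector analysis of the first component in Lemma~\ref{step1}.
\begin{itemize}
\item From the upper-left sector, $\pi$ is nondecreasing up to $j^*$, so the chain shifts right until it reaches $(+1,j^*) = \tilde{j}$, in at most $j^*-1$ steps.
\item From the lower-left sector, the chain travels left on the $-1$ copy. It eventually jumps to the $+1$ copy, at the latest at state $1$, where $\pi(0)=0$ forces the jump. It then proceeds as in the upper-left case.
\item From the lower-right sector, the chain must pass leftward through $j^*$ on the $-1$ copy. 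It may jump while still in the lower-right sector, in which case it then moves right on the $+1$ copy to $n$ and jumps back, or it may jump after entering the lower-left sector. Either way it reaches $\tilde{j}$.
\item From the upper-right sector, the chain moves right, jumps at some point (at the latest at $n$), and then continues as in the lower-right case.
\end{itemize}

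The main obstacle is bounding the total length uniformly by $2n-1$ across these cases. This requires checking the counts listed in Lemma~\ref{step1}, in particular for the upper-right sector, where there can be two jumps together with roughly $2(n - j^*)$ plus $j^*$ shifts. The key fact to use is unimodality: once the chain passes $j^*$ moving away from it, $\pi$ is nonincreasing, so no more than one bounce off each side is needed before the chain reaches $\tilde{j}$. Granting these lengths, $E \subseteq \{\text{first hit of } \tilde{j} \text{ occurs at some } t \le 2n-1\}$, which gives the first inequality. If some case threatens to exceed $2n-1$ steps, I would recount it using the monotonicity of $\pi$ to rule out revisiting states, relying on the step counts already asserted in the proof of Lemma~\ref{step1}.
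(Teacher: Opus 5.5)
Your route is the paper's own. The left side equals $P(T\le 2n-1)$ for the first hitting time $T$ of $\tilde{j}$, and it dominates $P(E)$ because $E$ forces a hit by time $2n-1$. Also $P(E)=(1-\theta)^{2n-1}$ exactly, because the Step~3 coin is independent of the current state and of the Step~2 outcome; your exact computation is cleaner than the paper's ``on the order of $n^{-1}$''. The paper simply asserts the first inequality, implicitly relying on the first-component counts in Lemma~\ref{step1}. You rightly isolate $E\subseteq\{T\le 2n-1\}$ as the only real content, but your case analysis for it contains a wrong step. In the lower-right sector the chain sits at $(-1,i)$ with $i>j^*$ and proposes $i-1\ge j^*$, where $\pi(i-1)\ge\pi(i)$ by unimodality. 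So $\alpha=1$: on $E$ the move is a forced shift, and no jump can occur there. This matters. If your detour (jump in the lower-right sector, run right to $n$, jump back) were possible, the hitting time on $E$ would not be bounded by $2n-1$. A single such detour starting from $(-1,n)$ can already take $3n-j^*-1>2n-1$ steps. So ``granting these lengths'' would be granting something your own description contradicts.

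With the correct dynamics the count closes directly. On $E$ the only random choices occur on the downhill legs: rightward on the $+1$ copy at positions $>j^*$, and leftward on the $-1$ copy at positions $\le j^*$. Jumps are forced at $(+1,n)$ and $(-1,1)$ since $\pi(n+1)=\pi(0)=0$, and all uphill legs are forced shifts. Start from $(+1,j)$ with $j>j^*$, and take a trajectory jumping at $(+1,k)$ with $k\in[j,n]$ and at $(-1,l)$ with $l\in[1,j^*]$. It reaches $\tilde{j}$ in $(k-j)+1+(k-j^*)+(j^*-l)+1+(j^*-l)=2k-j+j^*-2l+2\le 2n-1$ steps, using $k\le n$, $j\ge j^*+1$ and $l\ge 1$. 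The lower-right, lower-left and upper-left sectors give at most $n+j^*-1$, $2j^*-1$ and $j^*-1$ steps respectively. This bound holds for every realization of the shift/jump choices, so drop ``deterministically'': what must be bounded is the worst case, with the latest possible jumps. With that, $E\subseteq\{T\le 2n-1\}$ follows and your proof is complete.
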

    \noindent \textit{Proof.} The left-hand side of Equation \eqref{3.6} is bounded below by $P(E)$. Since the probability of a flip or stationary move, at any step, is on the order of $n^{-1}$, the right-hand side of Equation \eqref{3.6} is proved. We are done. \qed\\

    \noindent We next state a proposition that will give a lower bound on the occurrence probability of the third component.
    \begin{propn}
        Assume the requirements of Lemma~\ref{step1} hold true.  Suppose
        \begin{equation}
            \{G_t\}_{t = 0}^{cn} = \left\{X_t = j'\wedge X_t \neq \tilde{j}\text{ }\forall\text{ } t > 0\wedge X_l \neq j'\text{ }\forall\text{ }0 < l < t\text{ $|$ }X_0 = \tilde{j}\right\}_{t = 0}^{cn}.
        \end{equation}
        \noindent Then if $j^{\prime}=(+1,j)$ or $j^{\prime}=(-1,j)$,
        \begin{equation}
            \sum_{t = 0}^{2n - 1}\Biggl[P(G_t)\Biggr] \geq \dfrac{m_j(n)}{m_{j^*}(n)}\left(\dfrac{n - 1}{n}\right)^{2n - 1}. \label{3.7}
        \end{equation} \label{3rdcomponent}
    \end{propn}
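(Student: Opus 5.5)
The plan is to prove \eqref{3.7} by conditioning on the event $E'$ that no flip or stationary move occurs in the first $2n-1$ steps started from $\tilde{j}=(+1,j^*)$. By the same reasoning as in Proposition~\ref{1stcomponent}, $P(E') \geq ((n-1)/n)^{2n-1}$. Conditioned on $E'$, every step is either a shift or a jump. The jump probabilities then become $1-m_{k\pm1}(n)/m_k(n)$, the shift probabilities become $\min\{1, m_{k\pm1}(n)/m_k(n)\}$, and a jump at the boundary states $(+1,n)$ and $(-1,1)$ is certain. It will therefore suffice to show that, conditioned on $E'$, the chain started at $\tilde{j}$ reaches $j'$ within $2n-1$ steps, without returning to $\tilde{j}$, with probability at least $m_j(n)/m_{j^*}(n)$. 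Since the events $G_t$ are disjoint in $t$, summing $P(G_t)$ over $t$ recovers exactly this probability.

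The key observation is a telescoping one. Under $E'$, a walker moving \emph{downhill} from $k$ to $k'$, with $\pi$ decreasing along the way, survives without jumping with probability $\prod m_{i+1}(n)/m_i(n) = m_{k'}(n)/m_k(n)$. Moving \emph{uphill} succeeds with probability $1$. I would split into four cases according to the sector of $j'$ (using the sectors from Lemma~\ref{step1}), always keeping the standing assumption $j^* \leq (n+1)/2$.
\begin{enumerate}
\item If $j'=(+1,j)$ with $j \geq j^*$, the walker shifts right from $j^*$ to $j$. This has conditional probability $m_j(n)/m_{j^*}(n)$, takes $j-j^*$ steps, and never revisits $\tilde{j}$.
\item If $j'=(-1,j)$ with $j>j^*$, the walker first reaches $(+1,j)$, with conditional probability $m_j(n)/m_{j^*}(n)$. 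It must then jump at some $k\in[j,n]$, since the jump at $n$ is forced. It then shifts left uphill back to $(-1,j)$ with conditional probability $1$. Crucially, I would not fix the jump location; I would sum over all $k$, which avoids the small factor $m_n(n)/m_{j^*}(n)$ that a fixed turnaround at $n$ would produce.
\item If $j'=(-1,j)$ with $j \leq j^*$, the walker first makes a right excursion from $\tilde{j}$. It must jump somewhere in $[j^*, n]$, possibly at $j^*$ itself, and returns uphill to $(-1,j^*)$ with probability $1$. From there it shifts left downhill to $j$ with conditional probability $m_j(n)/m_{j^*}(n)$. Note that $(-1,j^*)$ is not the basepoint.
\item If $j'=(+1,j)$ with $j<j^*$, I would combine the previous case with a second forced turnaround. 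After reaching $(-1,j)$, the walker jumps at some $k\in[1,j]$ and shifts uphill to $(+1,j)$. It reaches $(+1,j)$ before it could reach $(+1,j^*)=\tilde{j}$, so the basepoint is not revisited.
\end{enumerate}

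The main obstacle I expect is the step count, not the probabilities. Case 4 involves two full excursions, one to the right of $j^*$ and one to the left of it. Its worst case is of order $2(n-j^*)+1 + 2j^* - 1 = 2n$ steps, which sits right at the edge of the $2n-1$ window. I would first handle this by counting exactly: the turnaround states are not double-counted, and the jump at $(+1,j^*)$ or at $(-1,1)$ absorbs one step. If the exact count still exceeds $2n-1$ by a constant, I would instead extend the summation range to $t\le 2n+1$, and use the corresponding power $((n-1)/n)^{2n+1}$. This changes nothing asymptotically, since every version of the factor tends to $e^{-2}$.

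A secondary check is that the chain never passes through $\tilde{j}$ at an intermediate time in any case. This holds because, under $E'$, the $+1$ copy is only revisited at position $j^*$ after a full loop. In every case the loop is cut off at $j'$ before that point.
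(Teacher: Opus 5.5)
Your proposal is correct and follows the paper's route: you condition on the event of no flips or stationary moves, as in Proposition~\ref{1stcomponent}, and the factor $m_j(n)/m_{j^*}(n)$ comes out as the probability of surviving the downhill shifts. You also supply the details the paper's one-line proof omits: the telescoping product, the summation over turnaround points, and the step count. For Case 4, the exact worst case (turnarounds at $n$ and $1$) is $2(n-j^*)+1+(j^*-1)+1+(j-1)=2n-j^*+j\le 2n-1$ because $j\le j^*-1$, so your fallback of enlarging the window to $2n+1$ is never needed; that is fortunate, since it would have proved a different statement.
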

    \noindent \textit{Proof.} Here, the argument used in Proposition~\ref{1stcomponent} needs to be adjusted. With $\pi$ as constructed, it is possible that $\Phi$ will take more steps than necessary to move from $\tilde{j}$ to $j'$. Because of this, the ratio seen in the right-hand side of Equation \eqref{3.7} is included to account for this. This proves the proposition, and we are done. \qed\\
    
    \noindent We next need to find a lower bound on the probability of the second component. Here, let $F'$ be the random variable that represents the number of flips or stationary moves in a single intermediate return, and let $F$ be the random variable that represents the number of flips or stationary moves in all intermediate returns prior to shortening. We start with the proposition to follow.
    \begin{propn}
        Assume the requirements of Lemma~\ref{step1} hold true. Then
        \begin{equation}
            P(F' = 0) \geq \left(\frac{n - 1}{n}\right)^{2n}.\label{3.8}
        \end{equation} \label{proposition3}
    \end{propn}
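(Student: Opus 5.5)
We bound $P(F' = 0)$ from below by the probability that none of the first $2n$ steps of the intermediate return is a flip or a stationary move. The argument follows Proposition~\ref{1stcomponent}, with $2n$ steps in place of $2n - 1$. The key observation is that an intermediate return with no flip or stationary move cannot last more than $2n$ steps, so only the first $2n$ steps matter.

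\textbf{Step 1: an intermediate return with no flips or stationary moves has length at most $2n$.} Start at $\tilde{j} = (+1, j^*)$ and consider the excursion in which only shifts and jumps occur.
\begin{enumerate}
    \item On the $+1$ copy the chain shifts rightward until a jump occurs. A jump must occur by state $(+1, n)$ at the latest, since $\pi(n+1) = 0$.
    \item It then shifts leftward on the $-1$ copy until a second jump. This must occur by $(-1, 1)$ at the latest, since $\pi(0) = 0$.
    \item It then shifts rightward on the $+1$ copy back to $(+1, j^*)$.
\end{enumerate}
Every such step moves deterministically along this cycle, and the cycle through all $2n$ states returns to $\tilde{j}$ in at most $2n$ steps. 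This is consistent with the bound ``between 2 and $2n$'' stated in the second component of Lemma~\ref{step1}.

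\textbf{Step 2: reduce to a product of per-step probabilities.} By Step 1, the event $\{F' = 0\}$ contains the event $E'$ that none of the first $2n$ steps starting from $\tilde{j}$ is a flip or a stationary move. If the return finishes earlier, the extra steps are irrelevant, and imposing the condition on them only shrinks the event. At every state, the combined probability of a flip or stationary move is exactly $\theta = n^{-1}$: from the table, $p_f + p_{f'} = \theta \min\{1, \cdot\} + \theta(1 - \min\{1, \cdot\}) = \theta$. Hence, by the Markov property, conditioning successively on the past gives
\begin{equation*}
    P(E') = (1 - \theta)^{2n} = \left(\frac{n-1}{n}\right)^{2n}.
\end{equation*}
The outcome of Step 3 of the sampler (probability $1 - \theta$ of changing sign, independent of Step 2) is what decides whether a move is a shift or jump versus a flip or stationary move.

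\textbf{Main obstacle.} The delicate point is being careful that $F'$ counts only flips and stationary moves \emph{within} the return. One must argue that, on $E'$, the return indeed terminates within $2n$ steps, so that no later steps could contribute flips. This is exactly where Step 1 is needed: absent flips, the trajectory is forced along the deterministic cycle of length at most $2n$. With that established, $\{F' = 0\} \supseteq E'$ and \eqref{3.8} follows.
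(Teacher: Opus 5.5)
Your argument is correct, but it takes a different route from the paper.

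\textbf{The paper's route.} The paper computes $P(F'=0)$ exactly as a double sum over the two jump locations $(+1,j^*+t_1)$ and $(-1,j^*-t_2)$. Each flip-free excursion of that shape has probability $\left(\frac{n-1}{n}\right)^{2t_1+2t_2+2}$ times the weight $\bigl(m_{j^*+t_1}(n)-m_{j^*+t_1+1}(n)\bigr)\bigl(m_{j^*-t_2}(n)-m_{j^*-t_2-1}(n)\bigr)/m_{j^*}(n)^2$. The paper bounds each power below by $\left(\frac{n-1}{n}\right)^{2n}$ and uses that the weights telescope to $1$ once $m_0(n)=m_{n+1}(n)=0$.

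\textbf{Your route.} You use that whether a step is a flip or stationary move is decided by the sampler's Step 3 coin alone. That coin is independent of the Metropolis accept/reject and of the past. So $E'$ has probability exactly $\left(\frac{n-1}{n}\right)^{2n}$, and it lies inside $\{F'=0\}$ by the deterministic length bound.

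\textbf{Comparison.} The paper's telescoping identity is precisely the fact that the flip-free dynamics surely closes up with one of these shapes; your inclusion $E'\subseteq\{F'=0\}$ uses this implicitly. Your route is shorter and shows directly why the bound does not depend on $\pi$. In particular, the paper's remark that the length bound $2n$ alone ``is not enough'' is overly cautious once this independence is used. The paper's computation, in exchange, yields the exact value of $P(F'=0)$ and the law of the excursion shape.

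\textbf{One thing to tighten in your Step 1.} The flip-free trajectory is not deterministic, since the jump positions are random. What forces the second jump to happen at a position at most $j^*$, and rules out any jump on the way back, is unimodality: the jump probability is zero at $(-1,j)$ for $j>j^*$ and at $(+1,j)$ for $j<j^*$. With that stated, every flip-free excursion has length $2t_1+2t_2+2\le 2(n-j^*)+2(j^*-1)+2=2n$.
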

    \noindent \textit{Proof.} It is not enough to say that the longest intermediate return with no flips or stationary moves has length $2n$. Instead, we must sum over all possibilities for intermediate returns. Doing this gives
    \begin{equation}
        P(F' = 0) = \sum_{t_2 = 0}^{j^* - 1}\sum_{t_1 = 0}^{n - j^*}\Biggl[\left(\tfrac{n - 1}{n}\right)^{2t_1 + 2t_2 + 2}\left(\tfrac{m_{j^* + t_1}(n)}{m_{j^*}(n)}\right)\left(1 - \tfrac{m_{j^* + t_1 + 1}(n)}{m_{j^* + t_1}(n)}\right)\left(\tfrac{m_{j^* - t_2}(n)}{m_{j^*}(n)}\right)\left(1 - \tfrac{m_{j^* - t_2 - 1}(n)}{m_{j^* - t_2}(n)}\right)\Biggr],
    \end{equation}
    \noindent which simplifies to
    \begin{align}
        P(F' = 0) &= \sum_{t_2 = 0}^{j^* - 1}\sum_{t_1 = 0}^{n - j^*}\Biggl[\left(\frac{n - 1}{n}\right)^{2t_1 + 2t_2 + 2}\left(\frac{m_{j^* + t_1}(n) - m_{j^* + t_1 + 1}(n)}{m_{j^*}(n)}\right)\left(\frac{m_{j^* - t_2}(n) - m_{j^* - t_2 - 1}(n)}{m_{j^*}(n)}\right)\Biggr]\\
        &\geq \left(\frac{1}{m_{j^*}(n)}\right)^2\left(\frac{n - 1}{n}\right)^{2n}\sum_{t_2 = 0}^{j^* - 1}\sum_{t_1 = 0}^{n - j^*}\Biggl[\left(m_{j^* + t_1}(n) - m_{j^* + t_1 + 1}(n)\right)\left(m_{j^* - t_2}(n) - m_{j^* - t_2 - 1}(n)\right)\Biggr]\\
        &\geq \frac{\left(m_{j^*}(n) - m_{n + 1}(n)\right)\left(m_{j^*}(n) - m_0(n)\right)}{\left(m_{j^*}(n)\right)^2}\left(\frac{n - 1}{n}\right)^{2n}. \label{3.9}
    \end{align}
    \noindent Defining
    \begin{equation}
        m_0(n) = m_{n + 1}(n) = 0, \label{3.10}
    \end{equation}
    \noindent applying Equation \eqref{3.10} to Equation \eqref{3.9}, and simplifying gives
    \begin{equation}
        P(F' = 0) \geq \left(\frac{n - 1}{n}\right)^{2n}. \label{3.11}
    \end{equation}
    \noindent Since Equation \eqref{3.11} agrees with Equation \eqref{3.8}, we are done. \qed\\

    \noindent Similar arguments prove the proposition below.
    \begin{propn}
        Assume the requirements of Lemma~\ref{step1} hold true. Then
        \begin{equation}
            P(F' = 1) \geq \frac{1}{n}\left(\frac{n - 1}{n}\right)^{4n - 2j^* - 2}. \label{3.12}
        \end{equation} \label{proposition4}
    \end{propn}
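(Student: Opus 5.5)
The plan is to mimic the proof of Proposition~\ref{proposition3}: write $P(F'=1)$ as a sum over the possible shapes of an intermediate return containing exactly one flip or stationary move, bound the powers of $(n-1)/n$ uniformly, and then show that the remaining sum of transition-probability factors telescopes to something bounded below by a quantity of order $1/n$. An intermediate return leaves the basepoint $\tilde{j}=(+1,j^*)$ and shifts right in the upper-right sector until, at some state $(+1,j^*+t_1)$, it leaves the $+1$ copy. It then shifts left in the lower copy, leaves the $-1$ copy at some $(-1,j^*-t_2)$, and shifts right back to $\tilde{j}$. Exactly one of the copy changes (or a stationary move inserted somewhere along the way) is to be replaced by a flip or stationary move, which carries the factor $1/n$ from the table. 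Every other step is a shift or jump with factor $(n-1)/n$ times the ratio of $m$'s.

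First, I would fix the cleanest subfamily rather than all configurations: returns in which the single $\theta$-move occurs at the turning point on the right. Concretely, the chain shifts to $(+1,j^*+t_1)$, then takes a flip to $(-1,j^*+t_1+1)$ or a stationary move at $(+1,j^*+t_1)$ followed by a jump, shifts left to $(-1,j^*-t_2)$, jumps, and shifts back. For the stationary-move version at the right turn, the factors are
\[
\frac{m_{j^*+t_1}(n)}{m_{j^*}(n)}\cdot\frac1n\Bigl(1-\frac{m_{j^*+t_1+1}(n)}{m_{j^*+t_1}(n)}\Bigr),
\]
which telescope exactly as in Proposition~\ref{proposition3}. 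The left turn gives the factor
\[
\frac{m_{j^*-t_2}(n)-m_{j^*-t_2-1}(n)}{m_{j^*}(n)}.
\]
The path contains one extra step, so its length is at most $2(n-j^*)+2j^*+1$. To reach the stated exponent $4n-2j^*-2$, I would use that the worst case includes an overshoot past the basepoint in the flip version, at most $2(n-j^*)$ additional steps, and bound all such powers below by $\left(\frac{n-1}{n}\right)^{4n-2j^*-2}$.

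Next, pulling out $\frac1n\left(\frac{n-1}{n}\right)^{4n-2j^*-2}$, the double sum over $t_1\in\{0,\dots,n-j^*\}$ and $t_2\in\{0,\dots,j^*-1\}$ factors into two telescoping sums. Using the convention \eqref{3.10}, these equal $\frac{m_{j^*}(n)-m_{n+1}(n)}{m_{j^*}(n)}=1$ and $\frac{m_{j^*}(n)-m_0(n)}{m_{j^*}(n)}=1$. This gives \eqref{3.12}, since the remaining configurations only add nonnegative terms.

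The main obstacle is the bookkeeping of path lengths. I must check that every configuration in the chosen subfamily is a genuine return, meaning it does not hit $\tilde{j}$ prematurely, and that its length really is at most $4n-2j^*-2$, consistent with the bound $4n-2j^*-1$ on the number of steps quoted in Step 1. If the stationary-move subfamily alone does not yield full telescoping, the first fallback is to combine the flip and stationary-move alternatives at the same state. Their probabilities sum to exactly $1/n$ times $m_{j^*+t_1}(n)/m_{j^*}(n)$, so summing over $t_1$ with a geometric-type weight still gives a lower bound of order $1/n$.
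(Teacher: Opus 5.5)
Your plan (enumerate returns with one $\theta$-move, bound each power of $\frac{n-1}{n}$ by the worst case, telescope as in Proposition~\ref{proposition3}) is what the paper's one-line proof (``similar arguments'') points to, but your central computation fails. In the stationary-move variant the chain first makes a stationary move at $(+1,j^*+t_1)$, of probability $\frac1n(1-r)$ with $r=m_{j^*+t_1+1}(n)/m_{j^*+t_1}(n)$. It \emph{then} makes a jump, of probability $\frac{n-1}{n}(1-r)$, and you dropped this second factor $1-r$. The $m$-dependent weight is really
\[
\frac{\bigl(m_{j^*+t_1}(n)-m_{j^*+t_1+1}(n)\bigr)^2}{m_{j^*}(n)\,m_{j^*+t_1}(n)},
\]
which does not telescope and can be small. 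If $m_{j^*+k}(n)=m_{j^*}(n)(1-\epsilon)^k$ with $\epsilon=2\log n/n$, its sum over $t_1$ is at most $\epsilon+(1-\epsilon)^{n-j^*}=O(\log n/n)$, whereas your argument needs it to be at least $1$. The flip variant alone has weight $\sum_{k\ge1}m_{j^*+k}(n)/m_{j^*}(n)$, which is arbitrarily small for a sharply peaked $\pi$. Moreover, for $t_1=0$ your stationary configuration is not a single return at all. A stationary move at $\tilde{j}$ already lands on $\tilde{j}$, which is exactly the premature-hitting issue you flagged. Separately, the ``overshoot past the basepoint'' you invoke for the exponent does not occur in your subfamily, whose paths have at most $2n+1$ steps. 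Using a larger exponent is harmless, but that justification is confused.

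Your fallback has the right observation: flip and stationary move at one state have total probability exactly $\frac1n$. But it works only if, after a stationary move, you allow any $\theta$-free continuation rather than specifically a jump, and ``order $1/n$'' is weaker than \eqref{3.12}. The simplest repair is to put the $\theta$-move at the first step. With probability $\frac1n\bigl(1-m_{j^*+1}(n)/m_{j^*}(n)\bigr)$ it is a stationary move at $\tilde{j}$, which is itself a complete return with $F'=1$. With probability $\frac1n\,m_{j^*+1}(n)/m_{j^*}(n)$ it is a flip to $(-1,j^*+1)$. One shift (probability $\frac{n-1}{n}$) then reaches $(-1,j^*)$. The left loop of Proposition~\ref{proposition3} ($t_2$ shifts, a jump, $t_2$ shifts back, $0\le t_2\le j^*-1$) then telescopes to probability at least $\left(\frac{n-1}{n}\right)^{2j^*-1}$ of reaching $\tilde{j}$ with no further $\theta$-move. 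Hence
\[
P(F'=1)\ge\frac1n\left(\frac{n-1}{n}\right)^{2j^*}\ge\frac1n\left(\frac{n-1}{n}\right)^{4n-2j^*-2},
\]
because $4j^*\le 2n+2\le 4n-2$ when $j^*\le(n+1)/2$ and $n\ge3$.
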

    \noindent Equation \eqref{3.8} and Equation \eqref{3.12} are not directly used in the calculation of a lower bound on the occurrence probability of a path of length $x = cn$ in the general unimodal case. Instead, these are stated to show that we will not need to rely on all values $\pi$ can take. Furthermore, they allow us to find a lower bound on the path probability of the second component, which is stated and proved below.
    \begin{propn}
        Assume the requirements of Lemma~\ref{step1} hold true. Then
        \begin{equation}
            P(F = f_1) \geq \min_{s = cn - 4n + 2}^{cn}\left[\dbinom{s}{f_1}\left(\frac{1}{n}\right)^{f_1}\left(\frac{n - 1}{n}\right)^{s - f_1}\right]. \label{3.13}
        \end{equation} \label{2ndcomponent1stpiece}
    \end{propn}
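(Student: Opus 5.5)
The plan is to treat the occurrences of flips or stationary moves during the second component as a sequence of Bernoulli trials, one per step, and then remove the dependence on the unknown length of the second component by taking a minimum over all lengths it can have.

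\emph{Range of lengths.} By Proposition~\ref{1stcomponent} and Proposition~\ref{3rdcomponent}, the first and third components each use between $0$ and $2n-1$ steps. The second component therefore has some length $s$, and since the whole ideal path has exactly $cn$ steps,
\begin{equation*}
cn-2(2n-1)\le s\le cn,
\end{equation*}
that is, $s$ lies in $\{cn-4n+2,\dots,cn\}$. I would condition on the value $s$ actually taken by the second component; I will call this the event $S=s$.

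\emph{Binomial count for fixed $s$.} At each step the chain performs a flip or stationary move (the ``$\theta$-move'') with probability $\theta=1/n$. This holds regardless of the current state, because the $\theta$ versus $1-\theta$ coin is tossed independently of whether the Metropolis proposal is accepted. Whether the $\theta$-move counts as a flip or as a stationary move depends on the state, but their total probability is always exactly $\theta$. So the indicators of ``$\theta$-move at step $l$'', for $l=1,\dots,s$, are i.i.d.\ Bernoulli$(1/n)$, independent of the acceptance coins. Conditioned on $S=s$, $F$ is then the number of successes among $s$ such trials, and
\begin{equation*}
P(F=f_1\mid S=s)=\binom{s}{f_1}\left(\frac1n\right)^{f_1}\left(\frac{n-1}{n}\right)^{s-f_1}.
\end{equation*}

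\emph{Averaging.} Next I would write
\begin{equation*}
P(F=f_1)=\sum_s P(S=s)\,P(F=f_1\mid S=s),
\end{equation*}
bound each conditional probability below by the minimum over $s\in[cn-4n+2,cn]$, and use $\sum_s P(S=s)=1$. This yields \eqref{3.13}.

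\emph{Main obstacle.} The delicate step is justifying the binomial formula under the conditioning. The length $s$ is itself determined by the chain's trajectory, and the trajectory depends on where $\theta$-moves occur, so $S$ is not independent of the $\theta$-coins. To handle this, I would define $s$ through the first and third components only, for example as $cn$ minus the first hitting time of $\tilde j$ minus the length of the final excursion from $\tilde j$ to $j'$. I would then argue by the strong Markov property at the first hitting time of $\tilde j$: the $\theta$-coins after that time are fresh i.i.d.\ Bernoulli$(1/n)$. If this decoupling turns out to be problematic, a fallback is to count the $\theta$-moves in a fixed window of $cn-4n+2$ steps starting at the hitting time, which is dominated by the binomial with $s=cn-4n+2$. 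In that case I would restrict the minimum accordingly, using Proposition~\ref{proposition3} and Proposition~\ref{proposition4} only to control the boundary effects.
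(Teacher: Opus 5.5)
Your outline is the paper's argument. The paper's proof asserts that $F$ is binomial with trials parameter equal to the number of steps in the second component and probability $n^{-1}$, then minimizes over $s\in[cn-4n+2,cn]$. The step you flag as the main obstacle is a genuine gap, which the paper's proof also passes over, and neither of your remedies closes it.

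Conditionally on $S=s$, $F$ is never $b(s,n^{-1})$, because of a parity constraint. Let $T_1$ be the first visit to $\tilde{j}$ and $L$ the last visit to $\tilde{j}$ before time $cn$. The second component runs from $T_1$ to $L$, so it is a closed loop at $\tilde{j}$. Suppose it contains $r$ right shifts, $\ell$ left shifts, $J$ jumps, $f^{+}$ and $f^{-}$ flips out of the $+1$ and $-1$ copies, and $q$ stationary moves. Zero net displacement gives $r+f^{+}=\ell+f^{-}$. An even number of sign changes gives $J\equiv f^{+}+f^{-}\pmod{2}$. Therefore
\begin{equation*}
S=r+\ell+J+f^{+}+f^{-}+q=2(r+f^{+})+J+q\equiv f^{+}+f^{-}+q=F\pmod{2}.
\end{equation*}
Hence $P(F=f_1\mid S=s)=0$ whenever $f_1\not\equiv s\pmod{2}$. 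For example, $P(F=0\mid S=s)=0$ for every odd $s$, consistent with the even lengths $2t_1+2t_2+2$ in Proposition~\ref{proposition3}. The binomial expression, by contrast, is strictly positive.

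Your first remedy changes nothing. The quantity $cn-T_1-(\text{length of the final excursion})$ is the same random variable $L-T_1$, and $L$ is a last-exit time, not a stopping time. The strong Markov property at $T_1$ makes the later coins i.i.d. But the event $\{L-T_1=s\}$ is a function of exactly those coins, so conditioning on it destroys their independence; the parity identity is the witness. The fallback computes the law of a different random variable, the number of flips and stationary moves in a deterministic window after $T_1$. That count is exactly $b(cn-4n+2,n^{-1})$, but it is not $F$. Propositions~\ref{proposition3} and~\ref{proposition4} are single-excursion bounds and do not relate the two.

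The averaging step has a separate problem. Propositions~\ref{1stcomponent} and~\ref{3rdcomponent} only truncate a sum at $t\le 2n-1$; they do not say the first and third components use at most $2n-1$ steps. Once a flip or stationary move occurs in those components, their lengths can exceed $2n-1$. One example is a stationary move at $(+1,n)$ on the $(2n-1)$-step route from $(+1,j^*+1)$. Another is a flip in the lower-right sector, which sends the chain back to the $+1$ copy right of $j^*$ and forces another sweep to the right end. So $S<cn-4n+2$ with positive probability, and $\sum_{s=cn-4n+2}^{cn}P(S=s)<1$. The law of total probability then yields only $P(S\in[cn-4n+2,cn])$ times the minimum, not the minimum itself.

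A correct argument has to stop conditioning on the random length of the second component. Instead it should bound the probability of an explicit joint event built from segments that begin at stopping times, where the strong Markov property genuinely supplies fresh coins.
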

    \noindent \textit{Proof.}  At any step in the second component, the algorithm that defines the Diaconis-Holmes-Neal sampler can either have the path flip (or stay put) or not flip.  Because the occurrence of a flip or stationary move at any point in the second component is independent of the occurrence of a flip or stationary move at any other point in the second component, $F$ has a binomial distribution with independent trials parameter equal to the number of steps in the second component and probability parameter $p = n^{-1}$.\\

    \noindent One question, however, that remains to be answered is the value of the independent trials parameter.  By the construction of the first and third components, the number of steps the second component will require could be as small as $cn - 4n + 2$ or as large as $cn$.  Because of this, a lower bound on $P(F = f_1)$ will take the minimum probability over the possible values for the independent trials parameter. This completes the proof, and we are done. \qed\\

    \noindent \textbf{Note.} Because the minimum of Equation \eqref{3.13} occurs when $s = cn$, we may simplify Equation \eqref{3.13} to
    \begin{equation}
        P(F = f_1) \geq \dbinom{cn}{f_1}\left(\frac{1}{n}\right)^{f_1}\left(\frac{n - 1}{n}\right)^{cn - f_1}. \label{3.14}
    \end{equation}
    \noindent In order to find a lower bound on the occurrence probability of the path constructed in Lemma~\ref{step1}, we need two more things. We need a multiplier for including flips and stationary moves while shortening, and we need a multiplier for the probability that the constructed path has length $x = cn$. The multiplier for including flips and stationary moves while shortening is established in the following.
    \begin{propn}
        Suppose $F''$ is the random variable that equals the number of additional flips or stationary moves included in the intermediate returns while shortening the path from $y$ to $j'$. Then $F'' \sim b\left(\lfloor cn/\tilde{f}\rfloor, n^{-1}\right)$. \label{2ndcomponent2ndpiece}
    \end{propn}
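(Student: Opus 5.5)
The plan is to argue as in the proof of Proposition~\ref{2ndcomponent2ndpiece}'s predecessor, Proposition~\ref{2ndcomponent1stpiece}: describe $F''$ as a count of independent Bernoulli$(n^{-1})$ indicators. The first task is to fix what the $\lfloor cn/\tilde{f}\rfloor$ trials are. By Step 1 and the remarks following Lemma~\ref{step1}, the second component is a concatenation of independent intermediate returns. With probability bounded below, a positive constant proportion of these are ``longer'' returns containing \textcolor{green}{green} states, at which inserting a flip or stationary move shortens the return by an odd amount in $\{1,3,\dots,2d^{**}-1\}$. Here $\tilde{f}$ is a constant, determined by $\pi$ through $d_1,d_2$, that bounds the typical length in units of $n$ steps allotted to one candidate insertion location. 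I would partition the $cn$ steps into $\lfloor cn/\tilde{f}\rfloor$ consecutive blocks of length $\tilde{f}$. Each block is assigned exactly one designated step, namely the green state of the return covering that block at which an inserted flip or stationary move would be used for shortening.

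Next, I would show that the event ``a flip or stationary move is included at the designated step of block $i$'' has probability exactly $\theta=n^{-1}$. Step 3 of the algorithm flips the sign with probability $1-\theta$ independently of everything else, so whether the designated step is a flip or stationary move, rather than a shift or jump, is decided by an independent $\theta$-coin. This holds because the Step 2 acceptance outcome only separates flip from stationary move, and separates shift from jump. Both flips and stationary moves count toward $F''$, so the acceptance ratio $\min\{1,\pi(j\pm1)/\pi(j)\}$ drops out, and the indicator is Bernoulli$(n^{-1})$ whatever the state.

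For independence across blocks, I would invoke the Markov property in the form used in remark 4 after Lemma~\ref{step1}. The Step 3 coins at distinct steps are i.i.d. and independent of the Step 2 acceptances. The designated steps are fixed positions, chosen before the coins are revealed, so conditioned on the block structure the indicators are i.i.d. Bernoulli$(n^{-1})$. Summing them gives $F''\sim b(\lfloor cn/\tilde{f}\rfloor, n^{-1})$.

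The main obstacle is the second step, making the designated positions deterministic. In reality the locations of green states depend on the random path, including earlier coin outcomes. I would handle this by conditioning on the skeleton of the path, meaning its shifts, jumps and return lengths computed as if no extra flips occurred. The comparison $P(L_1|A_1)\le P(L_2|A_2)$ from the proof of Lemma~\ref{step1} would then justify that inserting flips only at skeleton-designated steps does not lower the probability. Given the skeleton, the designated steps are fixed and the binomial law follows. Any leftover steps beyond $\tilde{f}\lfloor cn/\tilde{f}\rfloor$ are simply ignored.
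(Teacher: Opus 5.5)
Your proposal follows the paper's route: each step carries an independent Bernoulli$(n^{-1})$ flip-or-stationary indicator, and only the green candidate locations, a $\tilde{f}^{-1}$ proportion of the $cn$ steps, count as trials, giving $\lfloor cn/\tilde{f}\rfloor$; your observation that the Step 3 coin alone decides this indicator (so its probability is exactly $\theta$ at every state, not merely of order $n^{-1}$) and your conditioning on a flip-free skeleton make explicit the independence that the paper's proof takes for granted. The one point to tighten is the block-by-block assignment, because green states come in long contiguous runs alternating with red ones, not one per block of constant length $\tilde{f}$: it is enough to designate the first $\lfloor cn/\tilde{f}\rfloor$ green steps of the skeleton, and since that many exist only on an event of probability bounded below (the paper's $\tilde{f}^{-1}$ is likewise only a per-step probability bound), the binomial law you obtain is conditional on that event, just as the paper's argument implicitly is.
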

    \noindent \textit{Proof.} In the proof of the previous proposition, we stated that ``the algorithm that defines the Diaconis-Holmes-Neal sampler can either have the path flip (or stay put) or not flip". We also stated that, because of this, the number of flips and stationary moves in the second component has a binomial distribution. Here, the probability parameter of the binomial distribution remains the same, $n^{-1}$. All that is left to argue is that the independent trials parameter is equal to $\lfloor cn/\tilde{f}\rfloor$.\\

    \noindent Because the sampler can either have the path flip or not flip, each state is equally likely to have a flip or stationary move. This was mentioned in Note 2 to the proof of Lemma~\ref{step1}. However, half of the states (namely, the states in the lower-right quadrant and the upper-left quadrant) are not ``good" locations for including a flip or stationary move to shorten an intermediate return. This is because including a flip or stationary move in any of these states will only lengthen the intermediate return and, thus, increase the length of the overshoot.\\

    \noindent In addition, per Note 2 to the proof of Lemma~\ref{step1} and with probability bounded above by a positive constant less than one, the sampler is in a state represented by \textcolor{red}{\textbf{red}} or \textcolor{blue}{\textbf{blue}} lines (as shown in Figure~\ref{intermediatereturns}). In other words, the sampler is in a state where it is not optimal to include a flip or stationary move. Thus, with probability bounded below by the positive constant $\tilde{f}^{-1}$, the sampler is in a state represented by \textcolor{green}{\textbf{green}} lines (as shown in Figure~\ref{intermediatereturns}) and we can shorten the intermediate return by an odd number of steps between one and $2d^{**} + 1$, with each possibility equally likely. Because of this, ideally we want the independent trials parameter to be equal to $(cn)/\tilde{f}$. However, since this can take on non-integer values, taking the floor of the ideal independent trials parameter is sufficient.\qed\\

    \noindent Finding the multiplier for the probability that the constructed path has exactly $cn$ steps, however, requires some work. First, we show the following.
    \begin{propn}
        The Diaconis-Holmes-Neal sampler, in the general unimodal case, is non-null and persistent. \label{non-null}
    \end{propn}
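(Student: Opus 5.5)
The plan is to reduce the statement to standard facts about finite Markov chains. The state space of $\Phi$ is the finite set $\{(\epsilon,j):\epsilon=\pm 1,\ 1\le j\le n\}$ of $2n$ states. For a finite chain, irreducibility implies that every state is persistent and non-null. So the whole argument comes down to showing that $\Phi$ is irreducible, and then invoking (or briefly re-deriving) that classical fact.

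\textbf{Step 1: irreducibility through the basepoint.} I would show that every state communicates with $\tilde{j}=(+1,j^*)$, reusing the first and third components from the proof of Lemma~\ref{step1}.
\begin{enumerate}
\item Reaching the basepoint: from any $y$ there is a path of shifts and at most two jumps, of length at most $2n-1$, ending at $\tilde{j}$.
\item Leaving the basepoint: from $\tilde{j}$ there is such a path to any $j'$.
\end{enumerate}
Each move used has positive probability, because $m_j(n)>0$ for every $j$ and $0<\theta=n^{-1}<1$.
\begin{enumerate}
\item Shifts have probability $(1-\theta)\min\{1,m_{j\pm1}(n)/m_j(n)\}>0$.
\item The jumps needed are taken where they have positive probability: at the boundary states $(+1,n)$ and $(-1,1)$, where $\pi(0)=\pi(n+1)=0$ forces a jump of probability $1-\theta$.
\item If a jump is needed at an interior state where its probability happens to vanish, I would instead use the stationary-move/flip mechanism. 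Alternatively, I would route around the boundary, since going out to the boundary and back is always available.
\end{enumerate}
Propositions~\ref{1stcomponent} and~\ref{3rdcomponent} already give explicit positive lower bounds for these component probabilities. Together they show that $P^t(y,\tilde{j})>0$ and $P^s(\tilde{j},j')>0$ for some $t,s\le 2n-1$. Hence all states communicate.

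\textbf{Step 2: persistence.} In a finite irreducible chain, not all states can be transient: the expected total number of visits summed over the finitely many states is infinite. Irreducibility then makes every state persistent. As a concrete check, the bounds above show that from any state the chain hits $\tilde{j}$ within $2n-1$ steps with probability at least some $\eta>0$. A geometric-trials argument then gives $P(\tau_{\tilde{j}}>k(2n-1))\le(1-\eta)^k$, where $\tau_{\tilde{j}}$ is the hitting time of $\tilde{j}$. So the hitting time is almost surely finite, and the tail bound holds from every starting state.

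\textbf{Step 3: non-nullity.} The same tail bound shows that the expected return time to $\tilde{j}$ is finite, at most $(2n-1)/\eta$. So $\tilde{j}$ is non-null, and by irreducibility every state is non-null. As a consistency check, I would verify that $\tilde{\pi}((\epsilon,j))=\pi(j)/2$ satisfies $\tilde{\pi}P=\tilde{\pi}$ by checking the skew-detailed-balance relation for each move type. Kac's formula then gives the expected return time to $(\epsilon,j)$ as $2/\pi(j)<\infty$.

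\textbf{Main obstacle.} I expect the only delicate point to be Step 1: making sure every required jump or flip has strictly positive probability for an arbitrary unimodal sequence, including plateaus where $m_{j+1}(n)=m_j(n)$ and interior jumps therefore have probability zero. I would handle this by routing all sign changes either through boundary jumps or through flips/stationary moves, which have probability at least $\theta\min\{1,m_{j\pm1}(n)/m_j(n)\}>0$.
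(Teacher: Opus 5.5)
Your proof is correct, and it justifies the key step differently from the paper. The paper's proof is two lines. It says a path can be built between any two states, so the chain is irreducible. It then cites the theorem that an irreducible chain \emph{with a stationary distribution} is non-null and persistent, implicitly using the known stationarity of $\tilde{\pi}((\epsilon,j))=\pi(j)/2$.

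You instead use that the state space has only $2n$ states. Irreducibility plus your geometric-trials bound $P(\tau_{\tilde{j}}>k(2n-1))\le(1-\eta)^k$ gives persistence and a finite mean return time directly. The existence of a stationary distribution then becomes a consequence rather than an input, so your argument is more self-contained.

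You are also more careful about irreducibility itself, which the paper asserts without checking transition probabilities. Doing every sign change at $(+1,n)$ and $(-1,1)$ gives the cycle $(+1,1)\to\cdots\to(+1,n)\to(-1,n)\to\cdots\to(-1,1)\to(+1,1)$, and every step of it has positive probability. This settles the plateau concern for any unimodal sequence, so your fallback to flips is never needed. The paper's route is shorter and leads straight into its next step, where $\tilde{\pi}$ is put into the mean-recurrence-time formula.

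One correction to your optional consistency check. Write $\bar{x}=(-\epsilon,j)$ for $x=(\epsilon,j)$. The skew relation $\tilde{\pi}(x)P(x,y)=\tilde{\pi}(\bar{y})P(\bar{y},\bar{x})$ holds for shifts and jumps, but not for flips or stationary moves. For example, the flip $(+1,j)\to(-1,j+1)$ would need a transition $(+1,j+1)\to(-1,j)$, and no such transition exists. Flips and stationary moves instead satisfy ordinary detailed balance, since the $\theta$-part of the kernel is the $\tilde{\pi}$-symmetric move $(\epsilon,j)\leftrightarrow(-\epsilon,j+\epsilon)$. Summing the two relations over $x$ gives $(1-\theta)\tilde{\pi}(y)+\theta\tilde{\pi}(y)=\tilde{\pi}(y)$. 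This does not affect your argument, because non-nullity already follows from your tail bound.
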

    \noindent \textit{Proof.} Because a path can be constructed between any two states in the Diaconis-Holmes-Neal sampler, it is irreducible. In addition, Theorem 9.1 of Guhaniyogi and Kang \cite{kang} states that every irreducible Markov chain with a stationary distribution is non-null and persistent. We are done. \qed\\

    \noindent Next, we state the following theorem from p. 227 of Grimmett and Stirzaker \cite{grimmett}.
    \begin{theorem}
        An irreducible chain has a stationary distribution if, and only if, all states are non-null and persistent; in this case $\tilde{\pi}$ is the stationary distribution and is given by $\tilde{\pi}(j) = \mu_j^{-1}$ for each $j$, where $\mu_j$ is the mean recurrence time of state $j$. \label{recurrence1}
    \end{theorem}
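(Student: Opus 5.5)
This is the standard characterization of positive recurrence for irreducible chains on a countable state space. I would prove the two directions separately and obtain the formula $\tilde{\pi}(j) = \mu_j^{-1}$ as a by-product of the "only if" direction. Throughout, $T_j = \min\{n \geq 1 : X_n = j\}$ denotes the first return (or hitting) time of $j$, and $\mu_j = E(T_j \mid X_0 = j)$.

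\emph{Sufficiency (non-null persistent $\Rightarrow$ stationary distribution exists).} Fix a reference state $k$. For each state $i$, define $\rho_i(k)$ to be the expected number of visits to $i$ strictly before the first return to $k$, starting from $k$:
\begin{equation*}
\rho_i(k) = \sum_{n \geq 1} P(X_n = i,\ T_k \geq n \mid X_0 = k).
\end{equation*}
\begin{itemize}
\item Summing over $i$ gives $\sum_i \rho_i(k) = \mu_k$, which is finite by non-nullity.
\item Persistence of $k$ gives $\rho_k(k) = 1$.
\item Conditioning on the last step before time $n$ and using the Markov property yields the invariance equation $\rho(k) = \rho(k)P$. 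The summation exchanges are justified by nonnegativity (Tonelli).
\item Irreducibility gives $\rho_i(k) < \infty$ for every $i$, since $\rho_k(k) \geq \rho_i(k)\, p_{ik}(m)$ for some $m$ with $p_{ik}(m) > 0$.
\end{itemize}
Hence $\rho(k)/\mu_k$ is a stationary distribution.

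\emph{Necessity and the formula.} Suppose a stationary distribution $\tilde{\pi}$ exists.
\begin{itemize}
\item \emph{Persistence.} If the chain were transient, then $p_{ij}(n) \to 0$ for all $i,j$. Dominated convergence applied to $\tilde{\pi}(j) = \sum_i \tilde{\pi}(i) p_{ij}(n)$ would force $\tilde{\pi}(j) = 0$ for every $j$, which is impossible. By irreducibility, persistence is a class property, so all states are persistent.
\item \emph{Positivity.} Every $\tilde{\pi}(j)$ is positive: some $\tilde{\pi}(i) > 0$, and $\tilde{\pi}(j) \geq \tilde{\pi}(i)\, p_{ij}(m)$ for an $m$ with $p_{ij}(m) > 0$.
\item \emph{Kac's argument.} Run the chain with $X_0 \sim \tilde{\pi}$. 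Then
\begin{equation*}
\tilde{\pi}(j)\mu_j = \sum_{n \geq 1} P(T_j \geq n,\ X_0 = j).
\end{equation*}
Let $a_n = P(X_m \neq j \text{ for } 0 \leq m \leq n)$. Stationarity gives $P(T_j \geq n, X_0 = j) = a_{n-2} - a_{n-1}$ for $n \geq 2$. The term for $n = 1$ equals $P(X_0 = j)$.
\item \emph{Telescoping.} The series telescopes to $P(X_0 = j) + a_0 - \lim_n a_n = 1 - \lim_n a_n$. Persistence and irreducibility give $\lim_n a_n = 0$, so $\tilde{\pi}(j)\mu_j = 1$.
\end{itemize}
Since $\tilde{\pi}(j) > 0$, it follows that $\mu_j = \tilde{\pi}(j)^{-1} < \infty$. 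This shows simultaneously that every state is non-null, that $\tilde{\pi}(j) = \mu_j^{-1}$, and that the stationary distribution is unique.

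The main obstacle is the stationarity identity $P(T_j \geq n, X_0 = j) = a_{n-2} - a_{n-1}$ together with the limit $a_n \to 0$. The identity requires writing $\{X_0 = j, X_1, \dots, X_{n-1} \neq j\}$ as the difference of two "avoid $j$" events and shifting time by one using stationarity. The limit $a_n \to 0$ requires that, from any starting state, $j$ is hit almost surely. That is not just persistence of $j$ itself: one must derive $P(T_j < \infty \mid X_0 = i) = 1$ for all $i$ from persistence plus irreducibility, which I would do by a first-passage decomposition through $j$ applied to $1 = P(T_i < \infty \mid X_0 = i)$.
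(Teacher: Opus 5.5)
Your proposal is correct and is essentially the standard proof in Grimmett and Stirzaker: the $\rho_i(k)$ construction for existence, the transience-plus-dominated-convergence argument for persistence, and the Kac telescoping with $a_n$ giving $\tilde{\pi}(j)\mu_j = 1$. That book is exactly the source the paper quotes this theorem from, and the paper gives no proof of its own. The step you single out, $P(T_j < \infty \mid X_0 = i) = 1$ for all $i$, is also the one Grimmett and Stirzaker treat separately. Your first-passage decomposition settles it once you note that irreducibility gives a positive probability of visiting $j$ before returning to $i$.
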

    \noindent Applying this theorem to the general symmetric case, we immediately obtain the following.
    \begin{corol}
        The mean length of an intermediate return in the general unimodal case is
        \begin{equation}
            \mu_{\tilde{j}} = \frac{2z'}{m_{j^*}(n)}. \label{3.15}
        \end{equation} \label{recurrence2}
    \end{corol}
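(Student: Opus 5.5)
The plan is to apply Theorem~\ref{recurrence1} directly to the basepoint $\tilde{j} = (+1, j^*)$. This requires knowing that the Diaconis-Holmes-Neal sampler is irreducible, has a stationary distribution, and that this distribution is $\tilde{\pi}((\epsilon, j)) = \tfrac{1}{2}\pi(j)$.

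Irreducibility comes from the path construction in Lemma~\ref{step1}, which joins any two states, together with Proposition~\ref{non-null}, which also gives non-nullity and persistence. Once these are in place, Theorem~\ref{recurrence1} tells us that the stationary distribution is unique and equals $\mu_w^{-1}$ at each state $w$.

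The one substantive step is to confirm that $\tilde{\pi}((\epsilon,j)) = \tfrac12 \pi(j)$ is indeed stationary for the chain with the transition probabilities in the table. This is the standard skew-detailed-balance argument of Diaconis, Holmes, and Neal.

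First, Step 2 (attempt the move to $(-\epsilon, j+\epsilon)$, otherwise stay) preserves $\tilde{\pi}$. Pair $(\epsilon, j)$ with $(-\epsilon, j+\epsilon)$. The mass flowing from the first to the second is $\tfrac12 \pi(j)\min\{1, \pi(j+\epsilon)/\pi(j)\} = \tfrac12\min\{\pi(j), \pi(j+\epsilon)\}$. This is symmetric, so it equals the reverse flow. Hence Step 2 is a reversible involution-type kernel with respect to $\tilde{\pi}$. The boundary conventions $\pi(0)=\pi(n+1)=0$ turn attempted moves off the ends into rejections, which are harmless.

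Second, Step 3 (flip the sign with probability $1-\theta$) preserves $\tilde{\pi}$ because $\tilde{\pi}$ gives equal mass to $(+1,j)$ and $(-1,j)$.

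The composition of the two steps therefore preserves $\tilde{\pi}$. Using $\pi(j) = m_j(n)/z'$ from Equation~\eqref{3.1}, we get $\tilde{\pi}(\tilde{j}) = m_{j^*}(n)/(2z')$. Since intermediate returns are, by definition, excursions from $\tilde{j}$ back to $\tilde{j}$, their mean length is the mean recurrence time $\mu_{\tilde{j}} = 1/\tilde{\pi}(\tilde{j}) = 2z'/m_{j^*}(n)$, which is Equation~\eqref{3.15}.

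The main obstacle, mild as it is, is checking stationarity carefully at the boundary states $(+1,n)$ and $(-1,1)$ and at the mode. There the table lists unmodified (no-ratio) jump or flip probabilities, so I would verify the flow balance by hand in these few cases rather than rely on the generic formula. I also need to make sure the identification of ``intermediate return'' with a return time to $\tilde{j}$ uses the first-return convention, so that it matches $\mu_{\tilde{j}}$ exactly.
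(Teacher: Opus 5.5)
Your proposal is correct and follows the paper's route: the paper likewise applies Theorem~\ref{recurrence1} at the basepoint $\tilde{j}=(+1,j^*)$ with $\tilde{\pi}(\tilde{j})=\tfrac12\pi(j^*)=m_{j^*}(n)/(2z')$, but takes stationarity of $\tilde{\pi}$ as known, so your skew-detailed-balance check fills in something the paper leaves unstated rather than changing the approach. You can drop your worry about the boundary states and the mode: with $\pi(0)=\pi(n+1)=0$, the general two-stage argument (a reversible involutive proposal followed by a sign change that preserves $\tilde{\pi}$) already covers $(+1,n)$, $(-1,1)$ and $\tilde{j}$ without separate checks.
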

    \noindent Before proceeding to the next proposition, recall Note 3, Note 4, and Note 5 to the proof of Lemma~\ref{step1} allow us to conclude that, with probability bounded below by a positive constant, the number of intermediate returns contained in any overshoot is at least one and the length of any overshoot is some constant, $\tilde{c}$, multiple of $d^{**}$. Using these and Chebyshev's inequality, applied to the mean length of each of $r(n)$ intermediate returns contained in any overshoot, allow us to state the following.
    \begin{propn}
        Suppose $\{R_l\}_{l = 1}^{r(n)}$ is the sequence of return times (in steps) of the intermediate returns contained in any overshoot. Then, with probability bounded below by a positive constant and for some constant $\tilde{c} > 4$,
        \begin{equation}
            r(n) < \frac{\tilde{c}d^{**}m_{j^*}(n)}{4z'} \label{3.16}
        \end{equation} \label{returntimes}
    \end{propn}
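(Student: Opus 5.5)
The plan is to combine Corollary~\ref{recurrence2} with a Chebyshev/Markov-type concentration argument for the total length of the returns inside an overshoot. The starting observation is that, by the Note following Lemma~\ref{step1}, with probability bounded below by a positive constant the overshoot contains at least one intermediate return and has total length at most $\tilde{c}d^{**}$. By the Markov property the return times $R_1, R_2, \dots$ are i.i.d. copies of the recurrence time of $\tilde{j}$, and Corollary~\ref{recurrence2} gives each of them mean $\mu_{\tilde{j}} = 2z'/m_{j^*}(n)$. If the overshoot, of length at most $\tilde{c}d^{**}$, contains $r(n)$ complete returns, then $\sum_{l=1}^{r(n)} R_l \leq \tilde{c}d^{**}$. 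It therefore suffices to show that, with probability bounded below, having $r$ returns forces $\sum_{l\le r} R_l$ to be at least a fixed fraction of $r\mu_{\tilde{j}}$. That fraction is what converts the constraint into $r(n) < \tilde{c}d^{**}/(\tfrac12\mu_{\tilde{j}}) = \tilde{c}d^{**}m_{j^*}(n)/(z')$ and hence, with a slightly worse constant, into the stated bound with the factor $4$.

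The concentration step runs as follows. Set $r_0 = \lceil \tilde{c}d^{**}m_{j^*}(n)/(4z') \rceil$, which equals $\tilde{c}d^{**}/(2\mu_{\tilde{j}})$ up to rounding. The event $\{r(n) \geq r_0\}$ implies $\sum_{l=1}^{r_0} R_l \leq \tilde{c}d^{**} = 2 r_0 \mu_{\tilde{j}}$. I would then bound its probability. Applying Chebyshev to $S_{r_0}=\sum_{l\le r_0}R_l$ gives $P(|S_{r_0} - r_0\mu_{\tilde{j}}| \geq r_0\mu_{\tilde{j}}) \le \mathrm{Var}(R_1)/(r_0\mu_{\tilde{j}}^2)$. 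For this route I need a second-moment bound $\mathrm{Var}(R_1) \le C\mu_{\tilde{j}}^2$. That bound follows because every intermediate return has length at most $4n$ (both with zero and with one flip, per the Step 1 counts), while with probability at least a constant an intermediate return traverses a long excursion, which yields $\mu_{\tilde{j}}\gtrsim$ the typical length. If that comparison fails, I would instead apply Markov's inequality directly to $r(n)$: since $E[r(n)]\le E[\text{overshoot length}]/\mu_{\tilde{j}}+1$ by Wald's identity, $P(r(n)\ge r_0)$ is at most a constant below one once $\tilde{c}>4$. This Wald/Markov route is cleaner, and I would try it first.

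The last step is to intersect the two events. The first is the event from Note 3–5 that the overshoot has length at most $\tilde{c}d^{**}$, which has probability at least some $c_1>0$. The second is the complement of $\{r(n)\ge r_0\}$, which has probability at least $1-c_2$. Choosing $\tilde{c}$ large, and in particular $>4$, makes $c_2<c_1$, so the intersection has positive probability, and on it \eqref{3.16} holds.

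I expect the main obstacle to be the dependence between $r(n)$ and the $R_l$: $r(n)$ is itself defined through the returns, so naive independence fails. I would handle it with Wald's identity, treating $r(n)$ as a stopping time for the renewal sequence. The constant $\tilde{c}>4$ is exactly the slack needed to absorb the $+1$ boundary term and the factor-of-two loss from the Chebyshev and Markov steps.
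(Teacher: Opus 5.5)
Your concentration step runs in the wrong direction. With $r_0\approx\tilde{c}d^{**}/(2\mu_{\tilde{j}})$, you need $\{S_{r_0}\le\tilde{c}d^{**}\}=\{S_{r_0}\le 2r_0\mu_{\tilde{j}}\}$ to be unlikely. But its threshold lies \emph{above} the mean $r_0\mu_{\tilde{j}}$. Chebyshev's bound on $\{|S_{r_0}-r_0\mu_{\tilde{j}}|\ge r_0\mu_{\tilde{j}}\}$ only bounds the complement $\{S_{r_0}>2r_0\mu_{\tilde{j}}\}$, and Markov's inequality gives $P(S_{r_0}\le 2r_0\mu_{\tilde{j}})\ge 1/2$. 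So your event is likely. The Wald route is vacuous as well. Wald's identity applied to $N(t)+1$ gives only the \emph{lower} bound $E[N(t)]>t/\mu_{\tilde{j}}-1$. Even granting $E[r(n)]\le\tilde{c}d^{**}/\mu_{\tilde{j}}+1$, Markov at $r_0$ yields $P(r(n)\ge r_0)\le 2+2\mu_{\tilde{j}}/(\tilde{c}d^{**})>1$. Enlarging $\tilde{c}$ cannot help, because $\tilde{c}$ rescales $r_0$ and the window together. For the same reason, your bound $r(n)<\tilde{c}d^{**}m_{j^*}(n)/z'$ is four times \emph{weaker} than \eqref{3.16} and cannot be converted into it by ``a slightly worse constant''. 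The auxiliary claims also fail. Actual intermediate returns may contain any number of flips, so they are not bounded by $4n$; the Step 1 counts describe only the constructed returns. And $\mathrm{Var}(R_1)\le C\mu_{\tilde{j}}^2$ is false for sharply peaked $\pi$: with $m_{j^*}(n)=1$ and $m_j(n)=1/n$ otherwise, $\mu_{\tilde{j}}\approx 4$ while $\mathrm{Var}(R_1)$ is of order $n$.

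More fundamentally, under your reading the claim can fail, since you are asking the number of returns in a window of length $\tilde{c}d^{**}$ to be below half its mean. Take $m_{j^*}(n)=1$ and $m_j(n)=n^{-2}$ otherwise. Almost every return is a jump followed by a jump, so $\mu_{\tilde{j}}\approx 2$ with $\mathrm{Var}(R_1)$ bounded, while \eqref{3.3} makes $d^{**}$ of order $n$. Hence the count concentrates near $\tilde{c}d^{**}/2$, twice the right-hand side of \eqref{3.16}, with probability tending to one. The paper's proof reads \eqref{3.16} the other way round. Treating $r(n)$ as a given number of returns, it applies Markov's inequality (called Chebyshev's there) to the sum: $P(\sum_{l=1}^{r(n)}R_l>\tilde{c}d^{**})\le r(n)\mu_{\tilde{j}}/(\tilde{c}d^{**})$. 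It then requires the right-hand side to be below $1/2$ and substitutes $\mu_{\tilde{j}}=2z'/m_{j^*}(n)$ from Corollary~\ref{recurrence2}; the result is exactly \eqref{3.16}. So \eqref{3.16} is the condition under which $r(n)$ consecutive returns fit into $\tilde{c}d^{**}$ steps with probability at least $1/2$. That is a lower bound on how many returns fit, which is how it is used in Proposition~\ref{2ndcomponent3rdpiece}, and that one-line Markov estimate is exactly the direction your tools naturally deliver.
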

    \noindent \textit{Proof.} Applying Chebyshev's inequality to the mean length of $r(n)$ intermediate returns contained in any overshoot, we see that
    \begin{align}
        P\left(\sum_{l = 1}^{r(n)}R_l > \tilde{c}d^{**}\right) \leq \frac{\mu_{\tilde{j}}r(n)}{\tilde{c}d^{**}}. \label{3.17}
    \end{align}
    \noindent Here, $\tilde{c}$ is any constant, not depending on $n$, $d_1$, $d_2$, or the underlying probabilities, and $\tilde{j}$ is the basepoint. Bounding Equation \eqref{3.17} by 50\% and solving for $r(n)$ yields
    \begin{align}
        \frac{\mu_{\tilde{j}}r(n)}{\tilde{c}d^{**}} &< \frac{1}{2} & &\longleftrightarrow & r(n) &< \frac{\tilde{c}d^{**}}{2\mu_{\tilde{j}}} \label{3.18}\\
        & & &\longleftrightarrow & r(n) &< \frac{\tilde{c}d^{**}\tilde{\pi}(\tilde{j})}{2} \label{3.19}\\
        & & &\longleftrightarrow & r(n) &< \frac{\tilde{c}d^{**}m_{\tilde{j}}(n)}{4z'}\\
        & & &\longleftrightarrow & r(n) &< \frac{\tilde{c}d^{**}m_{j^*}(n)}{4z'} \label{3.20}
    \end{align}
    \noindent Since Equation \eqref{3.20} agrees with Equation \eqref{3.16}, we are done. \qed\\

    \noindent The results from Proposition~\ref{non-null}, Corollary~\ref{recurrence2}, and Proposition~\ref{returntimes} allow us to obtain the result shown below.
    \begin{propn}
        The probability that the path, as constructed in Lemma~\ref{step1}, has exactly $cn$ steps is bounded below by
        \begin{equation}
            P' \geq \frac{\tilde{\tilde{c}}m_{j^*}(n)}{z'}, \label{3.21}
        \end{equation}
        \noindent where $\tilde{\tilde{c}}$ is a constant. \label{2ndcomponent3rdpiece}
    \end{propn}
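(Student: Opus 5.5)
The bound will come from a renewal argument at the basepoint $\tilde{j}$, combined with the shortening mechanism of Lemma~\ref{step1}. Write the length of the constructed path (before shortening) as $T = T_1 + \sum_{l} R_l + T_3$, where:
\begin{itemize}
\item $T_1$ is the length of the first component, between $0$ and $2n-1$ by Proposition~\ref{1stcomponent};
\item $T_3$ is the length of the third component, also at most $2n-1$;
\item the $R_l$ are i.i.d.\ return times to $\tilde{j}$.
\end{itemize}

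\textbf{Locating a renewal near time $cn$.} By Proposition~\ref{non-null} and Theorem~\ref{recurrence1}, the renewal process of visits to $\tilde{j}$ is positive recurrent with mean inter-renewal time $\mu_{\tilde{j}} = 2z'/m_{j^*}(n)$ (Corollary~\ref{recurrence2}). The renewal theorem, or more crudely a counting argument over a window, then says the following. In any window of $W$ consecutive times that begins well after the first renewal, the expected number of visits to $\tilde{j}$ is about $W/\mu_{\tilde{j}} = W m_{j^*}(n)/(2z')$.

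I will use the window of length $W = \tilde{c}d^{**}$ immediately preceding the time $cn - T_3$ at which the third component must start. Proposition~\ref{returntimes} shows that, with probability bounded below by a positive constant, the returns falling in this overshoot window are few, namely $r(n) < \tilde{c}d^{**}m_{j^*}(n)/(4z')$. Combining this upper bound on the count with the lower bound on the expected count gives the following by a second-moment (Paley--Zygmund) or Chebyshev argument. With probability bounded below, there is at least one renewal at a time $t$ in the window, and the overshoot $cn - T_3 - t$ is some value in $[\tilde{a}d^{**}, (a'-\tilde{a})d^{**}]$.

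\textbf{Closing the overshoot.} Once a renewal lands within distance $O(d^{**})$ of the target, I invoke the shortening mechanism. Note 4 to Lemma~\ref{step1} shows that inserting $a'$ flips or stationary moves (with $a'$ of the correct parity) produces a total shortening that is spread over $[\tilde{a}d^{**}, (a'-\tilde{a})d^{**}]$. Each admissible value in that range is hit with probability at least $c'''(d^{**})^{a'-1}/(d^{**})^{a'} = c'''/d^{**}$. Proposition~\ref{2ndcomponent2ndpiece} guarantees that the required flips are available with probability bounded below. The inequality $P(L_1|A_1)\le P(L_2|A_2)$ from the proof of Lemma~\ref{step1} ensures that conditioning on the flip placement does not reduce the path probability.

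Multiplying these factors, the probability that the path has exactly $cn$ steps is at least a constant times $(d^{**}m_{j^*}(n)/z')\cdot(1/d^{**})$. The $d^{**}$ cancels, leaving $\tilde{\tilde{c}}m_{j^*}(n)/z'$, which is \eqref{3.21}. The absorbed factors include $(\frac{n-1}{n})^{O(n)}\to e^{-O(1)}$ and the constant from Proposition~\ref{returntimes}.

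\textbf{Main obstacle.} The hard step is the second-moment estimate: showing that a renewal actually falls in the window of width $\tilde{c}d^{**}$ with probability comparable to its expectation $\tilde{c}d^{**}m_{j^*}(n)/(2z')$. This cannot simply be a constant, because when $m_{j^*}(n)/z'$ is tiny, renewals are sparse. I would first try a direct last-exit decomposition. Sum over $t$ in the window the probability $P(X_t = \tilde{j})$, which by stationarity-type estimates (Theorem~\ref{MeynTweedie2009Thm} is not yet available, so instead use $\mu_{\tilde{j}}$ and the renewal equation) is about $m_{j^*}(n)/(2z')$ per time. Then use the positive lower bound $P(F'=0)\ge(\frac{n-1}{n})^{2n}$ from Proposition~\ref{proposition3} to control the dependence between successive returns. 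If a uniform-in-$n$ renewal theorem is unavailable, I would fall back on the crude bound of counting at least one hit per window. That bound is weaker but still of the form $\tilde{\tilde{c}}m_{j^*}(n)/z'$ after the cancellation of $d^{**}$.
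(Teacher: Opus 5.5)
Your architecture is the paper's: count returns to $\tilde{j}$ in a window of width $\tilde{c}d^{**}$, multiply by the chance $c'''/d^{**}$ (from the count $c'''(d^{**})^{a'-1}$ at the end of the Notes to Lemma~\ref{step1}) that the shortening matches the overshoot exactly, and cancel $d^{**}$. The gap is the step you flag yourself: the lower bound on the number of returns in the window. That step is the only source of the factor $m_{j^*}(n)/z'$, and none of the tools you propose delivers it.

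(i) You read Proposition~\ref{returntimes} as an \emph{upper} bound on the count (``the returns falling in this overshoot window are few''). The statement is easy to misread, but \eqref{3.17} is a Markov-type bound on $\sum_{l=1}^{r(n)}R_l$. It says that whenever $r(n)<\tilde{c}d^{**}/(2\mu_{\tilde{j}})$, the first $r(n)$ returns after a visit to $\tilde{j}$ are completed within $\tilde{c}d^{**}$ steps with probability at least $1/2$. That is a \emph{lower} bound on the count. The paper's proof uses it exactly that way (at least $\tilde{c}d^{**}m_{j^*}(n)/(4z')$ returns within $\tilde{c}d^{**}$ steps of $\tilde{j}$, with probability at least $c''$) before multiplying by $1/d^{**}$.

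(ii) Even on your reading, the inference fails. An upper bound on a count $N$ that holds only on an event of positive probability, together with a lower bound on $E[N]$, gives no lower bound on $P(N\ge 1)$, let alone on $P(N\gtrsim E[N])$, because all of $E[N]$ can sit on an event of tiny probability. Paley--Zygmund needs $E[N^2]\lesssim (E[N])^2$. You never supply this, and it would require second-moment control of the return time uniformly in $n$ and $\pi$.

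(iii) Knowing $\mu_{\tilde{j}}$ and writing the renewal equation does not give $P(X_t=\tilde{j})\approx m_{j^*}(n)/(2z')$ at $t\approx cn$. The renewal theorem is asymptotic in $t$ for a fixed chain. Here $\mu_{\tilde{j}}$ can be of order $n$ (it is $2n$ for uniform $\pi$), so only $O(1)$ returns precede time $cn$. A bound of this type, uniform in $n$, $\pi$ and the starting state, is essentially the minorization the main theorem is trying to establish, so it cannot simply be imported.

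Your fallback fails too. One return in the window with probability $q$ gives only $P'\gtrsim q\,c'''/d^{**}$, with no factor $d^{**}$ left to cancel. This dominates $m_{j^*}(n)/z'$ only if $q\gtrsim d^{**}m_{j^*}(n)/z'$. Moreover, the sparse regime you worry about never arises. The weights in \eqref{3.3} are nondecreasing in $t_2$, so with $j^*\le (n+1)/2$ we get $d^{**}\ge d_2-j^*\ge\lfloor (n-3)/4\rfloor$. Meanwhile $\mu_{\tilde{j}}=2z'/m_{j^*}(n)\le 2n$, since $m_{j^*}(n)$ is the largest $m_j(n)$. Hence $\tilde{c}d^{**}/\mu_{\tilde{j}}$, the nominal number of returns in the window, is always at least of order $\tilde{c}$, and renewals are never sparse on this scale.

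The real difficulty is the dense regime, and there the fallback collapses. For $m_{j^*}(n)=1$ and $m_j(n)=n^{-2}$ for $j\neq j^*$, the target $m_{j^*}(n)/z'$ stays above $1/2$, while $d^{**}$ is of order $n$ and the fallback gives only $O(1/n)$.

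To follow the paper, drop the second-moment and renewal-theorem discussion. Read Proposition~\ref{returntimes} in the correct direction to get, with probability bounded below, at least $\tilde{c}d^{**}m_{j^*}(n)/(4z')$ returns within $\tilde{c}d^{**}$ steps of a visit to $\tilde{j}$. By the bound on $d^{**}$ just noted, this number is at least $1$ once $\tilde{c}$ is a large enough constant. Anchor that window at a return inside the overshoot, as supplied by Notes~3 and~4, rather than at a fixed time, and then multiply by $c'''/d^{**}$. That is how the paper avoids the local renewal estimate your version needs.
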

    \noindent \textit{Proof.} Note 3 and Note 4 to the proof of Lemma~\ref{step1} give that the probability of having at least one intermediate return contained in any overshoot is larger than a positive constant. Furthermore, Equation \eqref{3.17} implies that the probability of having at least $\tilde{c}d^{**}m_{j^*}(n)/(4z')$ intermediate returns in $\tilde{c}d^{**}$ steps from $\tilde{j}$ is at least $c''$. With this,
    \begin{align}
        P' &\geq \frac{\tilde{c}c''d^{**}m_{j^*}(n)}{4d^{**}z'} \label{3.22}\\
        &= \frac{\tilde{\tilde{c}}d^{**}m_{j^*}(n)}{d^{**}z'} \label{3.23}\\
        &= \frac{\tilde{\tilde{c}}m_{j^*}(n)}{z'}. \label{3.24}
    \end{align}
    \noindent Here, the jump from Equation \eqref{3.22} to Equation \eqref{3.23} is made by setting $\tilde{\tilde{c}} = \tilde{c}c''/4$. Since Equation \eqref{3.24} agrees with Equation \eqref{3.21}, we are done. \qed\\

    \noindent \textbf{Note.} The multiplier of $d^{**}$ found in the denominator of Equation \eqref{3.22} is a direct consequence of Note 5 to the proof of Lemma~\ref{step1}.\\

    \noindent The results proved in this section allow us to find a lower bound on the occurrence probability of a path in the general unimodal case, which we state below.
    \begin{lemma}
        Assume the requirements of Lemma~\ref{step1} hold true, let $f_1$ be the number of flips and stationary moves in any path of length $x = cn$ before shortening, and let $f_2$ be the number of flips and stationary moves included in the path while shortening. Then the path has occurrence probability bounded below by
        \begin{equation}
            \tilde{p} \geq \frac{\tilde{\tilde{c}}m_j(n)}{z'}\dbinom{cn}{f_1}\dbinom{\lfloor cn/\tilde{f}\rfloor}{f_2}\left(\frac{1}{n}\right)^{f_1 + f_2}\left(\frac{n - 1}{n}\right)^{cn + 4n - f_1 - (2d^{**} + 1)f_2 - 2}. \label{3.25}
        \end{equation} \label{step2}
    \end{lemma}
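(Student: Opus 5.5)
The plan is to obtain the bound \eqref{3.25} by multiplying the lower bounds already established for the pieces of the path constructed in Lemma~\ref{step1}. By the Markov property, the three components are conditionally independent once the entrance and exit times at $\tilde{j}$ are fixed. The path probability therefore factors into four pieces: the first-component probability, the second-component probability including the shortening, the third-component probability, and the multiplier from Proposition~\ref{2ndcomponent3rdpiece} for the path having exactly $cn$ steps. I would write $\tilde{p}$ as a sum over the possible split times $(t_1,t_3)$ of the first and third components. For each fixed split I would bound the summand below by a product, and then use the fact that Propositions~\ref{1stcomponent} and \ref{3rdcomponent} already bound the sums over $t_1,t_3\le 2n-1$.

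The key steps are as follows.
\begin{enumerate}
\item Bound the first component below by $\left(\frac{n-1}{n}\right)^{2n-1}$ using Proposition~\ref{1stcomponent}.
\item Bound the third component below by $\frac{m_j(n)}{m_{j^*}(n)}\left(\frac{n-1}{n}\right)^{2n-1}$ using Proposition~\ref{3rdcomponent}. Together, steps 1 and 2 contribute the factor $\left(\frac{n-1}{n}\right)^{4n-2}$.
\item For the second component before shortening, use \eqref{3.14} to get the binomial factor $\binom{cn}{f_1}n^{-f_1}\left(\frac{n-1}{n}\right)^{cn-f_1}$.
\item For the shortening, use Proposition~\ref{2ndcomponent2ndpiece}: $F''\sim b(\lfloor cn/\tilde f\rfloor,n^{-1})$ gives $\binom{\lfloor cn/\tilde f\rfloor}{f_2}n^{-f_2}$.
\item Adjust the non-flip exponent. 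Each shortening with an inserted flip removes up to $2d^{**}+1$ steps from the count of non-flip steps, because the shortened return is at most $2d^{**}+1$ steps shorter. This accounts for the subtraction $(2d^{**}+1)f_2$ in the exponent. Since $\frac{n-1}{n}<1$, overcounting the removed steps only weakens the bound, which is in the right direction.
\item Multiply by the length multiplier $P'\ge \tilde{\tilde{c}}m_{j^*}(n)/z'$ from Proposition~\ref{2ndcomponent3rdpiece}. Combined with the ratio $m_j(n)/m_{j^*}(n)$ from the third component, the factors $m_{j^*}(n)$ cancel. This yields the prefactor $\tilde{\tilde{c}}m_j(n)/z'$ in \eqref{3.25}.
\end{enumerate}

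The main obstacle is step 5 together with the justification for multiplying the shortening factor against the unshortened binomial factor. I would handle this with the comparison $P(L_1\mid A_1)\le P(L_2\mid A_2)$ stated in the proof of Lemma~\ref{step1}: replacing a flip-free intermediate return by one containing a flip does not decrease the conditional path probability. The only cost is the factor $n^{-1}$ per inserted flip, together with the removal of the corresponding non-flip steps. I would also need to argue that the exponents combine to $cn+4n-f_1-(2d^{**}+1)f_2-2$. This follows by adding $(cn-f_1)$ and $(4n-2)$ and then subtracting the shortened steps. If this bookkeeping is slightly off, any discrepancy is a bounded number of factors of $\frac{n-1}{n}$ per flip, which can be absorbed into the constant $\tilde{\tilde{c}}$ as $n\to\infty$.
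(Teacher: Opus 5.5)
Your decomposition is the paper's own. Its proof multiplies \eqref{3.6}, \eqref{3.7}, \eqref{3.14}, the shortening multiplier $\binom{\lfloor cn/\tilde{f}\rfloor}{f_2}n^{-f_2}\bigl(\frac{n-1}{n}\bigr)^{-(2d^{**}+1)f_2}$, and \eqref{3.21}. It lets $m_{j^*}(n)$ cancel and adds the exponents $(4n-2)+(cn-f_1)-(2d^{**}+1)f_2$ exactly as you do. Your summation over split times, with the middle factor bounded uniformly via the minimum in \eqref{3.13}, is if anything more careful than the paper's bare product. The paper gives no argument for the factor $\bigl(\frac{n-1}{n}\bigr)^{-(2d^{**}+1)f_2}$ beyond a remark that it reflects shortening from $cn+b$ to $cn$ steps. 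So your step 5 is where the real work lies, and your justification there runs the wrong way.

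\textbf{The direction error in step 5.} Since $\frac{n-1}{n}<1$, subtracting more from the exponent makes $\bigl(\frac{n-1}{n}\bigr)^{cn+4n-f_1-(2d^{**}+1)f_2-2}$ larger. Charging the maximal removal $(2d^{**}+1)f_2$ therefore strengthens the claimed lower bound rather than weakening it.
\begin{itemize}
\item Each shortening removes an odd number of steps, at least $1$ and at most $2d^{**}+1$. If the shortenings remove $b$ steps in total, then $f_2\le b\le(2d^{**}+1)f_2$.
\item Your accounting (cost $n^{-1}$ per inserted flip plus removal of the corresponding non-flip steps) then yields the factor $\bigl(\frac{n-1}{n}\bigr)^{cn+4n-f_1-b-2}$.
\item That factor is at most the one in \eqref{3.25}, so \eqref{3.25} does not follow. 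It would need $b\ge(2d^{**}+1)f_2$, i.e.\ every shortening to be maximal.
\end{itemize}
What is valid is to subtract only $f_2$, or to drop the factor $\bigl(\frac{n-1}{n}\bigr)^{-b}\ge 1$ altogether.

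\textbf{Your fallback.} Absorbing the discrepancy into the constant also needs repair. The discrepancy is up to $2d^{**}f_2$ factors of $\frac{n-1}{n}$, and $d^{**}$ can be of order $n$, so it is not a bounded number per flip. Since $d^{**}\le n$, the discrepancy is still at least $\bigl(\frac{n-1}{n}\bigr)^{2nf_2}\to e^{-2f_2}$. It can therefore be absorbed, but only into a constant depending on $f_2$, not the fixed $\tilde{\tilde{c}}$ of Proposition~\ref{2ndcomponent3rdpiece}. That weaker statement suffices for Steps 4 and 5, where $\hat{c}$ already depends on $f_2$, but it is not \eqref{3.25} as written.

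\textbf{A similar issue in step 4.} The binomial mass $P(F''=f_2)$ carries a factor $\bigl(\frac{n-1}{n}\bigr)^{\lfloor cn/\tilde{f}\rfloor-f_2}\le 1$, which you drop, as the paper also does. This is legitimate only if you argue those non-flip steps are already counted in $\bigl(\frac{n-1}{n}\bigr)^{cn-f_1}$. Otherwise you must absorb the resulting constant, roughly $e^{-c/\tilde{f}}$.
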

    \noindent \textit{Proof.} The result is obtained by multiplying Equation \eqref{3.6}, Equation \eqref{3.7}, Equation \eqref{3.14}, the multiplier for including flips and stationary moves while shortening, and Equation \eqref{3.21}. Multiplying these together yields
    \begin{equation}
        \tilde{p} = P(E)P(F = f_1)P(F'' = f_2)P'\sum_{t = 0}^{2n - 1}\Biggl[P(G_t)\Biggr].
    \end{equation}
    \noindent Applying the computed lower bounds, we obtain
    \begin{align}
        \tilde{p} &\geq \frac{m_j(n)}{m_{j^*}(n)}\left(\frac{n - 1}{n}\right)^{4n - 2}\dbinom{cn}{f_1}\left(\frac{1}{n}\right)^{f_1}\left(\frac{n - 1}{n}\right)^{cn - f_1}\dbinom{\lfloor cn/\tilde{f}\rfloor}{f_2}\left(\frac{1}{n}\right)^{f_2}\left(\frac{n - 1}{n}\right)^{-(2d^{**} + 1)f_2}\left(\frac{\tilde{\tilde{c}}m_{j^*}(n)}{z'}\right) \label{3.26}\\
        &= \frac{\tilde{\tilde{c}}m_j(n)}{z'}\dbinom{cn}{f_1}\dbinom{\lfloor cn/\tilde{f}\rfloor}{f_2}\left(\frac{1}{n}\right)^{f_1 + f_2}\left(\frac{n - 1}{n}\right)^{4n - 2 + cn - f_1 - (2d^{**} + 1)f_2}\\
        &= \frac{\tilde{\tilde{c}}m_j(n)}{z'}\dbinom{cn}{f_1}\dbinom{\lfloor cn/\tilde{f}\rfloor}{f_2}\left(\frac{1}{n}\right)^{f_1 + f_2}\left(\frac{n - 1}{n}\right)^{cn + 4n - f_1 - (2d^{**} + 1)f_2 - 2}. \label{3.27}
    \end{align}
    \noindent Since Equation \eqref{3.27} agrees with Equation \eqref{3.25}, we are done. \qed\\

    \noindent \textbf{Note.} The multiplier that represents $P(F'' = f_2)$ is expressed as is in Equation \eqref{3.26} because it represents shortening our path from length $cn + b$ to length $cn$, where $b$ is the length of any overshoot.

    \subsubsection{Step 3} \label{sectionstep3}
    
    \noindent We now proceed to Step 3, the calculation of a lower bound on the probability that a path of length $x = cn$ starts at $y$ and ends in some subset of states of the Markov chain. Here, however, only two adjustments to Equation \eqref{3.25} are required. The proposition and corollary that follow will give us the adjustments to make.
    \begin{propn}
        Suppose $a$ is a constant, independent of $n$.  Then
        \begin{equation}
            \dbinom{n}{a} \lesssim n^a. \label{3.28}
        \end{equation} \label{notation1}
    \end{propn}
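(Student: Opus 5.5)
We will prove the bound by writing the binomial coefficient as a product and bounding each factor, using only that $a$ is a fixed constant independent of $n$. The symbol $\lesssim$ is read in the sense used throughout the paper: an inequality up to a multiplicative constant that does not depend on $n$, or an inequality that holds asymptotically as $n \to \infty$. We first dispose of the trivial cases. If $a$ is not a nonnegative integer with $a \le n$, the coefficient is either zero or interpreted as zero, and the bound is immediate. If $a = 0$, both sides equal $1$.

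For an integer $0 < a \le n$, we write
\begin{equation*}
\dbinom{n}{a} = \frac{n(n-1)\cdots(n-a+1)}{a!}.
\end{equation*}
Each of the $a$ factors in the numerator satisfies $n - i \le n$ for $0 \le i \le a - 1$, so the numerator is at most $n^a$. This gives
\begin{equation*}
\dbinom{n}{a} \le \frac{n^a}{a!} \le n^a,
\end{equation*}
which is the claimed inequality with constant $1$. In fact the inequality is exact for every $n$, not merely asymptotic.

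If a sharper, two-sided statement is wanted, as in the later limit computations where $\binom{cn}{f_1}(1/n)^{f_1}$ should tend to $c^{f_1}/f_1!$, we also note a lower bound. Each numerator factor satisfies $n - i \ge n - a$, so
\begin{equation*}
\frac{(n-a)^a}{a!} \le \dbinom{n}{a} \le \frac{n^a}{a!}.
\end{equation*}
Since $a$ is fixed, $\bigl((n-a)/n\bigr)^a \to 1$ as $n \to \infty$, and therefore $\binom{n}{a} \sim n^a/a!$.

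There is no real obstacle in this argument. The only point needing care is that $a$ must be independent of $n$: that is what makes both $a!$ and $\bigl((n-a)/n\bigr)^a$ behave as constants. For this reason we would state the result for fixed $a$, and avoid applying it with $a$ growing in $n$.
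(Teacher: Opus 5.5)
Your proof is correct and follows essentially the same route as the paper, which also writes $\binom{n}{a}$ as the product $\prod_{j=0}^{a-1}\frac{n-j}{j+1}$ and uses that $a$ is fixed. The paper only observes that the expanded product has top power $n^a$, while your factor-by-factor bound gives the sharper inequality $\binom{n}{a}\le n^a/a!$ for every $n$, together with the matching lower bound $\binom{n}{a}\ge (n-a)^a/a!$.
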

    \noindent \textit{Proof.}
    \begin{equation}
        \dbinom{n}{a} = \frac{n!}{(n - a)!a!},
    \end{equation}
    \noindent which, by the properties of factorials, reduces to
    \begin{equation}
        \dbinom{n}{a} = \prod_{j = 0}^{a - 1}\Biggl[\frac{n - j}{j + 1}\Biggr]. \label{3.29}
    \end{equation}
    \noindent Since $a$ is independent of $n$, Equation \eqref{3.29}, after simplifying, will have a $n^a$ term but no power of $n$ larger than $a$. Using notation given in both function of $n$ cases, we have Equation \eqref{3.28}.  We are done.\qed\\

    \noindent An immediate consequence of this proposition is the following corollary.
    \begin{corol}
        Both
        \begin{equation}
            \dbinom{cn}{f_1} \lesssim (cn)^{f_1} \label{3.30}
        \end{equation}
        \noindent and
        \begin{equation}
            \dbinom{\lfloor cn/\tilde{f}\rfloor}{f_2} \lesssim \left(\frac{cn}{\tilde{f}}\right)^{f_2}. \label{3.31}
        \end{equation}
        \noindent are true. \label{notation2}
    \end{corol}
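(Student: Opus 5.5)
The plan is to derive Corollary~\ref{notation2} directly from Proposition~\ref{notation1}, applying it twice with the appropriate substitutions. The only care needed is to check that the hypothesis of Proposition~\ref{notation1} holds: the lower index must be a constant independent of $n$, while the upper index may grow linearly in $n$.

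For \eqref{3.30}, I replace $n$ by $N = cn$ and $a$ by $f_1$ in Proposition~\ref{notation1}. The number of flips and stationary moves $f_1$ in the constructed path is fixed: the path contains a bounded number of flips or stationary moves, one or two per shortened intermediate return. So $f_1$ does not depend on $n$. The product formula \eqref{3.29} then gives
\begin{equation*}
\dbinom{cn}{f_1} = \prod_{j = 0}^{f_1 - 1}\frac{cn - j}{j + 1} \leq \frac{(cn)^{f_1}}{f_1!} \leq (cn)^{f_1}.
\end{equation*}
This is in fact a genuine inequality, not merely an asymptotic one, since each factor satisfies $(cn - j)/(j+1) \leq cn$.

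For \eqref{3.31}, I take $N = \lfloor cn/\tilde{f}\rfloor$ and $a = f_2$. Here $f_2$ counts the flips added during shortening, which by Note 3 to Lemma~\ref{step1} is a bounded number of intermediate returns (some $a'$ independent of $n$). The same product bound gives $\binom{N}{f_2} \leq N^{f_2}$. The final step uses $\lfloor cn/\tilde{f}\rfloor \leq cn/\tilde{f}$ together with the monotonicity of $t \mapsto t^{f_2}$ for $t \geq 0$.

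The main obstacle is justifying that $f_1$ and $f_2$ are genuinely constants independent of $n$, since Proposition~\ref{notation1} requires this. I would handle it by restricting the lower bound in Lemma~\ref{step2} to paths with bounded $f_1, f_2$. This costs nothing, because the subsequent sum in Step 3 only needs a lower bound, and the binomial tails with parameter $n^{-1}$ over order-$n$ trials have Poisson-type limits concentrated on bounded values. If one only wants $\lesssim$ in the sense used in the function of $n$ cases, the elementary inequality above already suffices uniformly.
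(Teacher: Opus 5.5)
Your argument is correct and is essentially the paper's: expand via the product formula \eqref{3.29}, discard the $f_1!$ (resp.\ $f_2!$) denominator, bound each numerator factor by the upper index, and finish with $\lfloor cn/\tilde{f}\rfloor \le cn/\tilde{f}$ together with monotonicity of $t\mapsto t^{f_2}$ on $t\ge 0$. That last step is stated more carefully in your version than in the paper's ``for any $k$ and $\alpha$''. Because $\binom{N}{a}\le N^a/a!\le N^a$ holds for every nonnegative integer $a$, you do not need $f_1,f_2$ to be independent of $n$ for this corollary, so your ``main obstacle'' paragraph can be dropped.
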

    \noindent \textit{Proof.} Equation \eqref{3.29} becomes
    \begin{align}
        \dbinom{cn}{f_1} &= \prod_{j = 0}^{f_1 - 1}\Biggl[\frac{cn - j}{j + 1}\Biggr]\\
        &= \frac{cn(cn - 1)(cn - 2)\dots(cn - f_1 + 1)}{f_1(f_1 - 1)(f_1 - 2)\dots(3)(2)(1)}\\
        &\lesssim cn(cn - 1)(cn - 2)\dots (cn - f_1 + 1)\\
        &\lesssim (cn)^{f_1}. \label{3.32}
    \end{align}
    \noindent Since Equation \eqref{3.32} agrees with Equation \eqref{3.30}, we have proved the first half of the corollary. For the second half, we see that
    \begin{align}
        \dbinom{\lfloor cn/\tilde{f}\rfloor}{f_2} &= \prod_{j = 0}^{f_2}\left[\frac{\left\lfloor\frac{cn}{\tilde{f}} - j\right\rfloor}{j + 1}\right]\\
        &= \frac{\left\lfloor\frac{cn}{\tilde{f}}\right\rfloor\left(\left\lfloor\frac{cn}{\tilde{f}}\right\rfloor - 1\right)\left(\left\lfloor\frac{cn}{\tilde{f}}\right\rfloor - 2\right)\cdots\left(\left\lfloor\frac{cn}{\tilde{f}}\right\rfloor - f_2 + 1\right)}{f_2(f_2 - 1)(f_2 - 2)\cdots(3)(2)(1)}\\
        &\lesssim \left\lfloor\frac{cn}{\tilde{f}}\right\rfloor\left(\left\lfloor\frac{cn}{\tilde{f}}\right\rfloor - 1\right)\left(\left\lfloor\frac{cn}{\tilde{f}}\right\rfloor - 2\right)\cdots\left(\left\lfloor\frac{cn}{\tilde{f}}\right\rfloor - f_2 + 1\right)\\
        &\lesssim \left\lfloor\frac{cn}{\tilde{f}}\right\rfloor^{f_2}.
    \end{align}
    \noindent Because $\lfloor k\rfloor \leq k$ and $\lfloor k\rfloor^\alpha \leq k^\alpha$ for any $k$ and $\alpha$,
    \begin{equation}
        \dbinom{\lfloor cn/\tilde{f}\rfloor}{f_2} \lesssim \left(\frac{cn}{\tilde{f}}\right)^{f_2}. \label{3.33}
    \end{equation}
    \noindent Since this agrees with Equation \eqref{3.31}, we have proved the second half of the corollary. We are done.\qed\\

    \noindent Combining Equation \eqref{3.25}, Equation \eqref{3.30}, and Equation \eqref{3.31} allows us to state the following without proof.
    \begin{lemma}
        Assume the requirements of Lemma~\ref{step1} hold true. Then the probability of $\Phi$ starting at state $y$ and ending at state $j'$, where $j' = (\epsilon, j)$ with $\epsilon = \pm 1$ and $1 \leq j \leq n$, is such that
        \begin{equation}
            \tilde{\tilde{p}} \gtrsim \frac{\tilde{\tilde{c}}c^{f_1 + f_2}m_j(n)}{\tilde{f}^{f_2}z'}\left(\frac{n - 1}{n}\right)^{cn + 4n - f_1 - (2d^{**} + 1)f_2 - 2}. \label{3.34}
        \end{equation} \label{step3}
    \end{lemma}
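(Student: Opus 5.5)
The plan is to start from the lower bound of Lemma~\ref{step2}, Equation \eqref{3.25}, and replace the two binomial coefficients by their leading-order behaviour as $n \to \infty$, with $f_1$ and $f_2$ held fixed. Lemma~\ref{step2} bounds the occurrence probability of a single constructed path of length $x = cn$ from $y$ to $j'$. The probability $\tilde{\tilde{p}}$ of $\Phi$ starting at $y$ and ending at $j'$ after $cn$ steps is at least the probability of this family of constructed paths, since the events in \eqref{3.25} are disjoint sub-events of $\{X_{cn} = j'\}$. So it suffices to put \eqref{3.25} into the asymptotic form \eqref{3.34}.

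The key step is to read Corollary~\ref{notation2} in the sharp sense. Proposition~\ref{notation1} and Corollary~\ref{notation2} are stated as upper bounds $\lesssim$, but a lower bound is what is needed here. From the product formula \eqref{3.29}, for fixed $a$ we have $\binom{N}{a} = \frac{N^a}{a!}(1 + O(a^2/N))$. So $\binom{cn}{f_1} \gtrsim (cn)^{f_1}$ up to the constant $1/f_1!$, and likewise $\binom{\lfloor cn/\tilde{f}\rfloor}{f_2} \gtrsim (cn/\tilde{f})^{f_2}$ up to $1/f_2!$. The floor only costs a factor $1 - O(1/n)$. The factorials $f_1!$ and $f_2!$ are constants independent of $n$ and can be absorbed into $\tilde{\tilde{c}}$ in the $\gtrsim$ notation, which is used in the same sense as in the function-of-$n$ cases.

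Substituting these into \eqref{3.25}, the powers of $n$ cancel exactly: $(cn)^{f_1}(cn/\tilde{f})^{f_2}(1/n)^{f_1+f_2} = c^{f_1+f_2}\tilde{f}^{-f_2}$. This leaves $\tilde{\tilde{c}}\,c^{f_1+f_2} m_j(n)/(\tilde{f}^{f_2} z')$ times the unchanged factor $\left(\frac{n-1}{n}\right)^{cn+4n-f_1-(2d^{**}+1)f_2-2}$, which is exactly \eqref{3.34}.

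The main obstacle is the direction of the inequality for the binomials. I would handle it by proving the two-sided estimate $\binom{N}{a} \sim N^a/a!$ directly from \eqref{3.29}, rather than relying on the one-sided statement in Corollary~\ref{notation2}. A second point needs care: $f_1$ and $f_2$ must stay bounded independently of $n$, or at least satisfy $f_i^2 = o(n)$, so that the correction factors $\prod_j (1 - j/N)$ stay bounded away from zero. I would assume this restriction on $f_1$ and $f_2$, since only bounded numbers of flips are needed in the path construction.
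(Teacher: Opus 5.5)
Your proposal is correct and follows the paper's route: you substitute the binomial asymptotics for $\binom{cn}{f_1}$ and $\binom{\lfloor cn/\tilde{f}\rfloor}{f_2}$ into \eqref{3.25}, with $f_1,f_2$ fixed as Proposition~\ref{notation1} also assumes, so that the powers of $n$ cancel against $(1/n)^{f_1+f_2}$. You are in fact more careful than the paper, which invokes only the upper bounds \eqref{3.30}--\eqref{3.31} even though a lower bound is needed; your estimate $\binom{N}{a}=\frac{N^a}{a!}\prod_{i=0}^{a-1}(1-i/N)\ge \frac{N^a}{a!}\bigl(1-\frac{a^2}{2N}\bigr)$, with $1/(f_1!\,f_2!)$ absorbed into the constant, supplies the inequality in the correct direction.
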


    \subsubsection{Steps 4 and 5} \label{sectionstep4and5}

    \noindent Finally, we proceed to Steps 4 and 5, the step to apply Theorem~\ref{MeynTweedie2009Thm} to Equation \eqref{3.34} and take a limit as $n \to \infty$ of the result from Step 4.\\

    \noindent Since Equation \eqref{3.34} gives a lower bound on the probability of a path in $\Phi$ starting at $y$ and ending in any subset $A$ of ${\cal B}(\Phi)$, Theorem~\ref{MeynTweedie2009Thm} can be applied to find an upper bound on the variation distance between the lower bound computed in Equation \eqref{3.34} and the stationary distribution of $\Phi$. First, we must compute $\rho$ as specified in Theorem~\ref{MeynTweedie2009Thm}. However, because there are up to $n$ possible values that the underlying probabilities can take, the calculation of $\rho$ must be modified from the calculations shown in both function of $n$ cases.\\

    \noindent The modification, however, is simple. For each $1 \leq k \leq n$, define $A_k = \{(+1, k), (-1, k)\}$. Then
    \begin{align}
        \rho &\lesssim 1 - \sum_{j = 1}^n\left[\frac{|A_k|\tilde{\tilde{c}}c^{f_1 + f_2}m_j(n)}{\tilde{f}^{f_2}z'}\left(\frac{n - 1}{n}\right)^{cn + 4n - f_1 - (2d^{**} + 1)f_2 - 2}\right]\\
        &= 1 - 2\sum_{j = 1}^n\Biggl[\frac{\tilde{\tilde{c}}c^{f_1 + f_2}m_j(n)}{\tilde{f}^{f_2}z'}\left(\frac{n - 1}{n}\right)^{cn + 4n - f_1 - (2d^{**} + 1)f_2 - 2}\Biggr]\\
        &= 1 - \frac{2\tilde{\tilde{c}}c^{f_1 + f_2}}{\tilde{f}^{f_2}z'}\left(\frac{n - 1}{n}\right)^{cn + 4n - f_1 - (2d^{**} + 1)f_2 - 2}\sum_{j = 1}^n\Biggl[m_j(n)\Biggr]\\
        &= 1 - \frac{2\tilde{\tilde{c}}c^{f_1 + f_2}z'}{\tilde{f}^{f_2}z'}\left(\frac{n - 1}{n}\right)^{cn + 4n - f_1 - (2d^{**} + 1)f_2 - 2},
    \end{align}
    \noindent which simplifies further to
    \begin{align}
        \rho &\lesssim 1 - \frac{2\tilde{\tilde{c}}c^{f_1 + f_2}}{\tilde{f}^{f_2}}\left(\frac{n - 1}{n}\right)^{cn + 4n - f_1 - (2d^{**} + 1)f_2 - 2}. \label{3.35}
    \end{align}
    \noindent Applying the substitutions
    \begin{equation}
        \hat{c} = \frac{2\tilde{\tilde{c}}c^{f_1 + f_2}}{\tilde{f}^{f_2}}
    \end{equation}
    \noindent and
    \begin{equation}
        cn - f_1 - f_2 = cn + 4n - f_1 - (2d^{**} + 1)f_2 - 2 \label{3.36}
    \end{equation}
    \noindent to Equation \eqref{3.35} yields
    \begin{equation}
        \rho \lesssim 1 - \hat{c}\left(\frac{n - 1}{n}\right)^{cn - f_1 - f_2}. \label{3.37}
    \end{equation}
    \noindent The substitution given in Equation \eqref{3.36} is made by recognizing that the number of steps in a path of length $x = cn$ that are not flips or stationary moves is equal to $cn - f_1 - f_2$. From here, applying Equation \eqref{3.37} to Equation \eqref{1.1} yields
    \begin{align}
        ||P^z(y,A) - \tilde{\pi}|| &\leq \rho^{\lfloor z/x \rfloor} \label{3.38}\\
        &\lesssim \left(1 - \hat{c}\left(\frac{n - 1}{n}\right)^{cn - f_1 - f_2}\right)^{\lfloor z/x \rfloor} \label{3.39}\\
        &= \left(1 - \hat{c}\left(\frac{n - 1}{n}\right)^{cn - f_1 - f_2}\right)^{\lfloor (c'n)/(cn) \rfloor} \label{3.40}\\
        &= \left(1 - \hat{c}\left(\frac{n - 1}{n}\right)^{cn - f_1 - f_2}\right)^{\lfloor c'/c \rfloor} \label{3.41}.
    \end{align}
    \noindent \textbf{Note.} The jump from Equation \eqref{3.39} to Equation \eqref{3.40} is made by letting $z = c'n$ for some constant $c'$ that is independent of $n$. This represents any generic path in $\Phi$ of length $c'n$.\\

    \noindent Equation \eqref{3.41} completes Step 4, and we can now proceed to Step 5. Taking its limit as $n \to \infty$ yields
    \begin{align}
        \lim_{n \to \infty}\Biggl[||P^z(y,A) - \tilde{\pi}||\Biggr] &\lesssim \lim_{n \to \infty}\left[\left(1 - \hat{c}\left(\frac{n - 1}{n}\right)^{cn - f_1 - f_2}\right)^{\lfloor c'/c \rfloor}\right]\\
        &= \left(1 - \hat{c}e^{-c}\right)^{\lfloor c'/c \rfloor}. \label{3.42}
    \end{align}
    \noindent Since Equation \eqref{3.42} agrees with Equation \eqref{3.2}, we have proved Theorem~\ref{mainresult}. \qed

    \section{Conclusion and Questions for Further Study} \label{conclusion}

    \noindent By improving the number of steps needed to converge to stationarity, the Diaconis-Holmes-Neal Markov chain sampler has significantly advanced applications of Metropolis et. al \cite{Metropolis1953}. \cite{DHN2000} Work presented by Hildebrand (\cite{Hildebrand2002} and \cite{Hildebrand2004}) formed the foundation for understanding V-shaped stationary distributions \cite{Hildebrand2004} and order $n$ convergence when the underlying probabilities are log-concave. \cite{Hildebrand2002} While Hildebrand \cite{Hildebrand2002} posed the question of whether order $n$ convergence occurs if the underlying probabilities are unimodal, an answer to the question was not provided until Lange \cite{Lange2025a}, and it was only answered in the three simplest cases - the simple case, the symmetric function of $n$ case, and the asymmetric function of $n$ case. The results proved in this paper further showed that convergence to stationarity occurs in order $n$ steps for any set of unimodal underlying probabilities. In addition, comparing the result from the general symmetric unimodal case, proved in Lange \cite{Lange2025b}, and the result proved here shows that symmetry is not a critical assumption. \\

    \noindent From here, there are at least three directions for future research. One is to investigate the number of steps needed for convergence to stationarity when the underlying probabilities are bimodal or multimodal. Even though symmetry was shown to not be critical in the analysis of unimodal underlying probabilities, it may prove to be critical when working with bimodal or multimodal underlying probabilities.\\

    \noindent A second future research direction is to investigate convergence to stationarity using other norms, such as the $l^2$ norm. Diaconis, Holmes, and Neal mentioned in their work \cite{DHN2000} that the $l^2$ norm is bounded above by four times the square of the variation distance. One would naturally think that this result is sufficient to show that convergence to stationarity in order $n$ steps occurs using the $l^2$ norm. However, they also proved that convergence to stationarity occurs in order $n \ln n$ steps, not order $n$ steps, when the underlying probabilities are uniform. The issue that Diaconis, Holmes, and Neal mentioned in their paper \cite{DHN2000} is that the chain has a deterministic behavior when it does not flip. A question of interest to ask is whether any similar issues arise for other sets of unimodal underlying probabilities when examining convergence to stationarity using the $l^2$ norm.\\

    \noindent In addition to the two previously stated research questions, we can also pose the question of whether convergence to stationarity exists in order $n$ steps with a single upper bound for every unimodal $\pi$.\\

    \section{Acknowledgments}

    \noindent The authors would like to acknowledge and thank Persi Diaconis, Joshua Isralowitz, Karin Reinhold, Carlos Rodriguez, and Felix Ye for their contributions, suggestions, and encouragement for this project.

\end{document}